\numberwithin{equation}{section}
\newtheorem*{mainconjecture}{Conjecture}
\newtheorem{theorem}{Theorem}[section]
\newtheorem{corollary}[theorem]{Corollary}
\newtheorem{lemma}[theorem]{Lemma}
\newtheorem{remark}[theorem]{Remark}
\newtheorem{proposition}[theorem]{Proposition}
\newtheorem{definition}[theorem]{Definition}
\newtheorem{notation}[theorem]{Notation}
\newcommand{\optsquared}[1]{\ifthenelse{\equal{#1}{}}{}{[#1]}}
\newcommand{\argmin}{\operatorname{argmin}}
\newcommand{\R}{\mathbb R}
\newcommand{\N}{\mathbb N}
\newcommand{\rmd}{\mathrm d}
\newcommand{\rme}{\mathrm e}
\newcommand{\eps}{\varepsilon}
\renewcommand{\H}{\mathbb H}
\newcommand{\range}[1]{\ifthenelse{\equal{#1}{}}{R}{\mathrm R[#1]}}
\newcommand{\dom}[1]{\ifthenelse{\equal{#1}{}}{D}{\mathrm D[#1]}}
\newcommand{\rangestar}[1]{\ifthenelse{\equal{#1}{}}{R_\star}{\mathrm R_\star[#1]}}
\newcommand{\image}[1]{\ifthenelse{\equal{#1}{}}{J}{\mathrm J[#1]}}
\newcommand{\imagestar}[1]{\ifthenelse{\equal{#1}{}}{J_\star}{\mathrm J_\star[#1]}}
\newcommand{\critical}[1]{\ifthenelse{\equal{#1}{}}{S}{\mathrm S[#1]}}
\newcommand{\sfz}{\mathsf z}
\newcommand{\sfx}{\mathsf x}
\newcommand{\sfy}{\mathsf y}
\newcommand{\sfu}{\mathsf u}
\newcommand{\sft}{\mathsf t}
\newcommand{\solution}[1]{\mathrm{GF}\optsquared{#1}}
\newcommand{\appsolution}[1]{\mathrm{AGF}\optsquared{#1}}
\newcommand{\truncatedsolution}[1]{\mathrm{TGF}\optsquared{#1}}
\newcommand{\minsolution}[1]{\mathrm{GF}_{min}\optsquared{#1}}
\newcommand{\cU}{\mathcal U}
\newcommand{\cK}{\mathcal K}
\newcommand{\rmC}{\mathrm C}
\newcommand{\Lip}{\mathrm{Lip}}
\newcommand{\MM}{\mathrm{MM}}
\newcommand{\GMM}{\mathrm{GMM}}
\newcommand{\sfd}{\mathsf d}
\newcommand{\dsfd}[1]{\sfd_{#1}}
\newcommand{\sfD}{\mathsf D}
\newcommand{\dsfD}[1]{\sfD_{#1}}
\newcommand{\LL}[1]{L}
\newcommand{\DMM}[3]{\mathrm{M}_{#1}(#2;#3)}
\newcommand{\MS}[3]{\mathrm{MS}_{#1}(#2;#3)}
\newcommand{\nn}[2]{N(#1,#2)}
\newcommand{\nchi}{{\raise.3ex\hbox{$\chi$}}}
\newcommand{\vphi}[3]{\varphi_{#1,#2}(#3)}
\newcommand{\vPhi}[3]{\Phi_{#1,#2}(#3)}
\newcommand{\vY}[4]{\mathcal J_{#1,#2,#3}(#4)}
\newcommand{\restr}[1]{\lower3pt\hbox{$|_{#1}$}}
\newcommand{\minimal}{minimal}
\newcommand{\Minimal}{Minimal}
\newcommand{\minimality}{minimality}
\newcommand{\fourteen}{14}
\title[Reverse approximation of gradient flows as Minimizing
Movements]{Reverse approximation of gradient flows as Minimizing
Movements: a conjecture by De Giorgi}
\begin{document}

\author{Florentine Fleissner} \thanks{Technische Universit\"at M\"unchen;
  email:
  \textsf{fleissne@ma.tum.de}}
  
\author{Giuseppe Savar\'e}
 \thanks{Dipartimento di Matematica ``F.~Casorati'', Universit\`a di Pavia; email:
 \textsf{giuseppe.savare@unipv.it}.
 G.S.~has been partially supported by
Cariplo foundation and Regione Lombardia
through the project 2016-1018 ``Variational evolution problems and optimal transport''}.


\begin{abstract} 
  We consider the Cauchy problem for the gradient flow
  \begin{equation}
    \label{eq:81}
    \tag{$\star$}
    u'(t)=-\nabla\phi(u(t)),\quad t\ge 0;\quad
    u(0)=u_0,
  \end{equation}
  generated by a continuously differentiable 
  function $\phi:\H\to\R$ in a Hilbert space $\H$ and study the
  reverse approximation of solutions to (\ref{eq:81}) by the De Giorgi
  Minimizing Movement approach. 

  We prove that if $\H$ has finite
  dimension
  and $\phi$ is quadratically bounded from below 
  (in particular if $\phi$ is Lipschitz)
  then for \emph{every} solution $u$ to (\ref{eq:81}) (which may have
  an infinite number of solutions) 
  there exist perturbations $\phi_\tau:\H\to\R \ (\tau>0)$
  converging to $\phi$ in the Lipschitz norm such that $u$ can be
  approximated by the Minimizing Movement scheme 
  generated by the recursive minimization of $\Phi(\tau,U,V):=\frac 1{2\tau}|V-U|^2+ \phi_\tau(V)$:
  \begin{equation}\label{eq:abstract}\tag{$\star\star$}
    U_\tau^n\in \argmin_{V\in \H} \Phi(\tau,U_\tau^{n-1},V)\quad n\in\N, \quad U_\tau^0:=u_0.
  \end{equation}
  We show that the piecewise constant interpolations with time step $\tau >
  0$ 
  of \emph{all} possible selections of solutions $(U_\tau^n)_{n\in\N}$ to
  (\ref{eq:abstract}) 
  will converge to $u$ as $\tau\downarrow 0$. 
  This result solves a question raised by Ennio De Giorgi in
  \cite{DeGiorgi93}. 
  
  We also show that even if $\H$ 
  has infinite dimension the above approximation holds for the distinguished class of \minimal\
  solutions to \eqref{eq:81},
  that generate all the other solutions to \eqref{eq:81} by time reparametrization.
\end{abstract}

\maketitle

{\small\tableofcontents}

\section{Introduction}

In his highly inspiring paper \cite{DeGiorgi93}
Ennio De Giorgi introduced the variational
notion
of \emph{Minimizing Movement} in order to present a 
general and unifying approach to a large class of evolution problems
in a vector, metric or even topological framework.

In the case of time-invariant evolutions in a topological space $\H$,
Minimizing Movements can be characterized by the recursive minimization of a functional
$\Phi:(0,\infty)\times \H\times \H\to [-\infty,+\infty]$.
For a given initial datum $u_0\in \H$ and a parameter
$\tau>0$
(which plays the role of discrete time step size) 
one looks for sequences
$(U^n_\tau)_{n\in \N}$ such that $U^0_\tau:=u_0$ and for every $n\ge
1$
\begin{equation}\normalcolor
  \label{eq:76}
  \Phi(\tau,U^{n-1}_\tau,U^n_\tau)=\min_{V\in
    \H}\Phi(\tau,U^{n-1}_\tau,V),\quad
  \text{i.e.}\quad
  U^n_\tau\in \operatorname{argmin}\Phi(\tau,U^{n-1}_\tau,\cdot).
\end{equation}
Any sequence satisfying \eqref{eq:76} {\normalcolor gives rise} to a 
\emph{discrete solution} $U_\tau:[0,\infty)\to\H$ at time step $\tau$, 
obtained by piecewise constant
interpolation of the values $(U^n_\tau)_{n\in \N}$:
\begin{equation}
 \label{eq:48}
 U_\tau(0):=U^0_\tau=u_0, \quad U_\tau(t):= U^n_\tau\quad\text{if }t\in ((n-1)\tau,n\tau],\quad
  n\in \N. 
\end{equation} 
A curve $u:[0,+\infty)\to\H$ is called Minimizing Movement
associated to $\Phi$ with initial datum $u_0$ (short $u\in\MM(\Phi, u_0)$) if   
there exist discrete solutions $U_\tau$ 
{\normalcolor(for
$\tau$ in a right neighborhood of $0$)} 
to the scheme (\ref{eq:76}) converging pointwise to $u$ 
as $\tau\downarrow 0$:
\begin{equation}
  \label{eq:68}
  u(t)=\lim_{\tau\downarrow0}U_\tau(t)\quad\text{for every }t\ge0.
\end{equation}
A curve $u:[0,+\infty)\to\H$ is more generally called Generalized Minimizing Movement (short $u\in\GMM(\Phi, u_0)$) 
if there exist a suitable vanishing subsequence $k\mapsto\tau(k)$ of
time steps and 
corresponding discrete 
solutions $U_{\tau(k)}$ at {\normalcolor time step} $\tau(k)$ to (\ref{eq:76}) such that
\begin{equation}
  \label{eq:68bis}
  u(t)=\lim_{k\to\infty}U_{\tau(k)}(t)\quad\text{for every }t\ge0.
\end{equation}   
 
The general notion of Minimizing Movement scheme
has proved to be extremely useful in a variety of
analytic, geometric and physical contexts; we refer to 
\cite{Ambrosio95, AGS08, fleissner2016gamma}, \cite{braides2012local} and \cite{mielke2011differential, mielke2016balanced} for a more detailed account of some
applications and developments and to
the pioneering paper \cite{AlmgrenTaylorWang93} by Almgren, Taylor, and
Wang.

Perhaps the simplest (though still interesting) situation arises if 
$\H$ is a Hilbert space and one tries to implement the scheme 
\eqref{eq:76} to solve the Cauchy problem for the gradient flow
\begin{equation}
  \label{eq:73}
  u'(t)=-\nabla\phi(u(t)),\quad t\ge0,
\end{equation}
with initial datum $u_0$ and continuously differentiable driving functional $\phi:\H\to \R$.
In this case a natural choice for the functional $\Phi$ is 
\begin{equation}
  \label{eq:77}
  \Phi(\tau,U,V):=\frac1{2\tau}|V-U|^2+\phi(V),
\end{equation}
for which the scheme (\ref{eq:76}) represents a sort of iterated
minimization of $\phi$ perturbed by $\frac1{2\tau}|\cdot-U|^2$. 
The last term penalizes 
the squared distance 
(induced by the norm $|\cdot|$ of $\H$) from the previous minimizer
$U$.
The Euler equation associated with the minimum problem
\eqref{eq:76} is then given by
\begin{equation}
  \label{eq:78}
  \frac{U^n_\tau-U^{n-1}_\tau}\tau+\nabla\phi(U^n_\tau)=0,
\end{equation}
so that the Minimizing Movement scheme can be considered as a
variational formulation of the implicit Euler method applied to
\eqref{eq:73}.
It is then natural to compare the class of 
solutions to \eqref{eq:73} and 
the classes of Minimizing Movements $\MM(\Phi, u_0)$ and Generalized Minimizing Movements $\GMM(\Phi, u_0)$ for $\Phi$ as in (\ref{eq:77}).  

If $\phi$ is a convex (or a quadratic perturbation of a convex)
function, it is possible to prove (see e.g.~\cite{Brezis70,Ambrosio95,AGS08})
that the Minimizing Movement scheme (\ref{eq:76}) is convergent to the unique
solution $u$ of \eqref{eq:73} with initial datum $u_0$, i.e. $\MM(\Phi,u_0)=\{u\}$. 
This
fundamental result can be extended to general convex and lower semicontinuous
functions
$\phi$, possibly taking the value $+\infty$ at some point of $\H$,
provided \eqref{eq:73} is suitably formulated as a subdifferential
inclusion.
Convexity assumptions can also be considerably relaxed
\cite{MarinoSacconTosques89,Rossi-Savare06}
as well as the Hilbertian character of the distance (see e.g.~\cite{MarinoSacconTosques89,AGS08,Mielke-Rossi-Savare13}).

\subsection*{Minimizing Movements and gradient flows governed by
  $\rmC^1$ functions.}
If $\H$ is a finite dimensional Euclidean space and $\phi$ is a continuously differentiable Lipschitz function, or more generally, a continuously differentiable function satisfying the lower quadratic bound
\begin{equation}\label{eq:coercive intro}
  \exists\,\tau_*>0,\ \phi_*\in\R:\quad
  \frac 1{2\tau_*}|x|^2+\phi(x)\ge -\phi_*\quad \text{for every } x\in\H,
\end{equation}
it is not difficult to see that the set $\GMM(\phi,u_0)$ is not empty and that
all its elements are solutions to \eqref{eq:73}. 

In general, there are more than one solution to (\ref{eq:73}) with initial datum $u_0$. 
A notable aspect is that the set $\MM(\phi,u_0)$ may be empty and/or 
$\GMM(\phi,u_0)$ merely a proper subset of the class of
solutions to \eqref{eq:73} with initial datum $u_0$. Such peculiarities pointed out by De Giorgi can be observed even in
one-dimensional examples of gradient flows driven by $\rmC^1$ Lipschitz functions \cite{DeGiorgi93}.  

It is then natural to look for possible perturbations of the scheme 
associated with \eqref{eq:77}, generating \emph{all} the
solutions to \eqref{eq:73}:  
this property would deepen our understanding of a gradient flow as a minimizing motion.  
This kind of question has also been treated
in the different context of rate-independent evolution processes \cite{Mielke-Rindler09}
from which we borrow the expression 
\emph{reverse approximation}.

A first contribution \cite{GGS94,Gobbino99} in the framework of the Minimizing Movement
approach to (\ref{eq:73}) 
deals with a uniform approximation
of $\Phi$, 
based on allowing approximate minimizers in each step of the scheme generated by \eqref{eq:77}.  

A much more restrictive class of approximation was proposed by De
Giorgi, 
who made the following conjecture
\cite[Conjecture 1.1]{DeGiorgi93}:
\begin{mainconjecture}[De Giorgi '93]
  Let us suppose that $\H$ is a finite dimensional Euclidean space
  and $\phi:\H\to \R$ is a continuously differentiable Lipschitz
  function. A map $u\in \rmC^1([0,\infty);\H)$ is a solution of
  \eqref{eq:73}
  if and only if 
  there exists a family $\phi_\tau:\H\to \R$, $\tau>0$, of Lipschitz
  perturbations of $\phi$ such that 
  \begin{equation}
    \label{eq:79}
    \lim_{\tau\downarrow0}\Lip[\phi_\tau-\phi]=0,
  \end{equation}
  and for the corresponding generating functional
  \begin{equation}
    \label{eq:80}
    \Phi(\tau,U,V):=\frac{1}{2\tau}|V-U|^2+\phi_\tau(V)
  \end{equation}
  one has 
  $u\in \GMM(\Phi,u(0))$.
\end{mainconjecture}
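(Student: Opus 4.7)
My plan is to prove both implications; the easy direction is standard and the reverse is the substance.

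Easy direction ($\Leftarrow$): Assume $u$ arises as a limit of discrete solutions $U_{\tau(k)}$ of the scheme (\ref{eq:76}) with $\tau(k)\downarrow 0$ and generating functional (\ref{eq:80}). The optimality of $U^n_\tau$ yields the Euler inclusion
\[
 \frac{U^n_{\tau(k)}-U^{n-1}_{\tau(k)}}{\tau(k)}\in -\partial\phi_{\tau(k)}(U^n_{\tau(k)}),
\]
where $\partial$ denotes the Clarke subdifferential. Since $\phi\in\rmC^1$, the additivity rule gives $\partial\phi_\tau(x)\subset\nabla\phi(x)+\bar B(0,\Lip[\phi_\tau-\phi])$, producing uniform $L^\infty$ control of the discrete velocities. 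Equicontinuity of the piecewise affine interpolants, Ascoli--Arzelà, and pointwise convergence $U_{\tau(k)}(t)\to u(t)$ then allow me to pass to the limit in the inclusion, yielding $u'=-\nabla\phi(u)$.

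Hard direction ($\Rightarrow$): Fix a horizon $T>0$ and set $v_n:=u(n\tau)$ for $n=0,\ldots,\lfloor T/\tau\rfloor$. I want $\phi_\tau=\phi+\psi_\tau$ with $\Lip[\psi_\tau]\to 0$ such that each $v_n$ is a global minimizer of $V\mapsto\tfrac{1}{2\tau}|V-v_{n-1}|^2+\phi_\tau(V)$. The first-order condition forces $\nabla\phi_\tau(v_n)=g_n:=-(v_n-v_{n-1})/\tau$; using $u'=-\nabla\phi(u)$, I compute
\[
 g_n-\nabla\phi(v_n)=\frac{1}{\tau}\int_{(n-1)\tau}^{n\tau}\bigl(\nabla\phi(u(s))-\nabla\phi(u(n\tau))\bigr)\,\rmd s,
\]
whose norm is bounded by the uniform modulus of continuity $\omega(\tau)$ of $\nabla\phi\circ u$ on $[0,T]$, so $\omega(\tau)\to 0$. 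I would construct $\psi_\tau$ as a Lipschitz function supported in a thin tubular neighborhood of the curve $u([0,T])$ satisfying $\nabla\psi_\tau(v_n)=g_n-\nabla\phi(v_n)$; a concrete template is
\[
 \psi_\tau(V):=\sum_n\langle V-v_n,\,g_n-\nabla\phi(v_n)\rangle\,\eta_n(V)
\]
with bump functions $\eta_n$ of radius $\ll\tau$ supported near $v_n$. Because any self-intersection of $u$ forces a stationary segment (from $(\phi\circ u)'=-|\nabla\phi(u)|^2\le 0$, equality of endpoints forces $u'\equiv 0$ there), the image is essentially a simple curve and the supports can be made disjoint, giving $\Lip[\psi_\tau]=O(\omega(\tau))\to 0$. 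A diagonal argument in $T\to\infty$ extends the construction to all of $[0,\infty)$.

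The main obstacle will be \emph{global} minimality of $v_n$ rather than mere criticality. The landscape $F_\tau(V):=\tfrac{1}{2\tau}|V-v_{n-1}|^2+\phi_\tau(V)$ is not globally convex, but any minimizer $V^*$ satisfies $|V^*-v_{n-1}|\le\tau(\Lip[\phi]+\Lip[\psi_\tau])$ by comparison with $V=v_{n-1}$, confining candidates to a ball of radius $O(\tau)$. Expanding around $v_n$ gives
\[
 F_\tau(V)-F_\tau(v_n)=\tfrac{1}{2\tau}|V-v_n|^2+R_\tau(V),
\]
with $|R_\tau(V)|\le(\omega_\phi(|V-v_n|)+O(\omega(\tau)))\,|V-v_n|$, where $\omega_\phi$ is the $\rmC^1$-modulus of $\nabla\phi$. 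For $\tau$ small, the quadratic dominates on the confinement ball, yielding global minimality. Making this rigorous in the merely $\rmC^1$ (as opposed to $\rmC^{1,1}$) regime is the delicate point: it may require refining $\psi_\tau$ beyond a linear tilt, for instance by adding a small convex quadratic correction at each $v_n$, or replacing the additive ansatz by a Moreau--Yosida-type infimum of tilted paraboloids anchored at the nodes. Balancing the weak regularity of $\phi$ against the robustness of global minimality is where I expect the bulk of the technical work to lie.
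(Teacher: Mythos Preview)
Your easy direction is fine and matches the paper's Lemma~\ref{le:well-known}. The hard direction, however, has a genuine gap that your closing caveats do not fully identify.

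The central difficulty is not merely upgrading criticality of $v_n$ to global minimality; it is that for \emph{non-minimal} solutions the sampled sequence $(v_n)_n=(u(n\tau))_n$ need not be selectable by the scheme at all, no matter how you tilt. Consider a solution $u$ that sits at a critical point $p$ on an interval $[a,b]$ of positive length and then resumes motion. For $n\tau\in(a,b)$ you have $v_{n-1}=v_n=p$ and $g_n-\nabla\phi(v_n)=0$, so your bump contributes nothing. But $V\mapsto\frac{1}{2\tau}|V-p|^2+\phi_\tau(V)$ will typically have minimizers strictly below $p$ along the curve (since $\phi$ decreases past $p$), and your quadratic-versus-remainder estimate breaks down precisely there: for $|V-v_n|$ small the right-hand side $O(\omega(\tau))|V-v_n|$ dominates $\frac{1}{2\tau}|V-v_n|^2$, so you cannot exclude a competing minimizer within distance $O(\tau\,\omega(\tau))$ of $p$. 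A small convex quadratic correction large enough to enforce uniqueness would need curvature $\sim 1/\tau$, destroying the Lipschitz smallness. More generally, the scheme wants to ``skip ahead'' through critical regions, and a perturbation with $\Lip[\psi_\tau]\to 0$ that only adjusts first-order data at the nodes cannot slow it down. Your disjoint-support argument also fails near critical points: the spacing $|v_n-v_{n-1}|\sim\tau|\nabla\phi(u)|$ can be arbitrarily small compared to $\tau$, so bumps of radius $\ll\tau$ need not separate, and shrinking them to the actual spacing blows up their gradients.

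The paper's route is structurally different. It first isolates a class of \emph{minimal} solutions (those crossing the critical set in a time-null set) and shows that for these a \emph{distance penalty} $\phi_\tau=\phi+\lambda\,\mathrm{dist}(\cdot,\{u(k\tau):k\le N\})$ with $\lambda\downarrow 0$ confines \emph{every} discrete minimizer to the sampled set and forces $\phi(U_\tau(t))\le\phi(u(t))$, which together with minimality pins down the limit (Section~\ref{subsec: 4.1}--\ref{subsec: 4.2}). For an arbitrary solution $u$, the paper does \emph{not} try to make the scheme select $u$ directly: it first perturbs $\phi$ to $\phi_\eps$ so that $u$ becomes (close to) a minimal solution of the perturbed flow. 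This is done by arc-length reparametrizing the range of $u$ to a one-dimensional problem, smoothing the singular part of the inverse-time map by convolution (Section~\ref{sec: one dimensional setting}), and lifting the resulting $\rmC^1$ modification back to $\H$ via Whitney extension (Section~\ref{sec: conclusion}). The key conceptual point you are missing is that handling the waiting times at critical points requires \emph{changing the speed of the gradient flow itself}, not just nudging the scheme at discrete nodes.
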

$\Lip[\cdot]$ in \eqref{eq:79} denotes the Lipschitz seminorm
\begin{equation}
  \label{eq:40}
  \Lip\big[\psi\big]:=\sup_{x,y\in \H,\ x\neq
    y}\frac{\psi(y)-\psi(x)}{|y-x|}\quad
  \text{whenever }\psi:\H\to \R.
\end{equation}
One of the main difficulties of proving this property concerns 
the behaviour of $u$ at critical points $w\in\H$ where $\nabla\phi(w)=0$
vanishes. Since $\nabla\phi$ is just a $\rmC^0$ map, it might happen that $u$ reaches a critical point after finite time, stays there for some amount of time and then leaves the point again.   
An even worse scenario might happen if the $0$ level set of $\nabla\phi$ is not
discrete. 
Even in the one dimensional case it is possible to construct
functions $\phi:\R\to \R$ with a Cantor-like $0$ level set $K\subset \R$ of $\phi'$
and corresponding solutions $u$ parametrized by a finite measure 
$\mu$ concentrated on $K$ and singular with respect to the Lebesgue
measure (see Appendix \ref{sec:example} for an explicit example).

A second difficulty arises from the lack of stability of the
evolution, due to non-uniqueness: since even small perturbations
may generate quite different solutions, one has to find suitable
perturbations of $\phi$ 
that keep these instability effects under control.
\medskip
\paragraph{\textbf{Aim and plan of the paper}}
In this paper we address the question raised by De Giorgi and we give
a positive answer to the above conjecture, in a stronger form
(\textbf{Theorem \ref{thm: De Giorgi conjecture}}):
we will show that it is possible to find Lipschitz perturbations $\phi_\tau$ of
$\phi$ in such a way that \eqref{eq:79} holds and 
\begin{equation}\label{eq:AGF introduction}
\MM(\Phi,u(0))=\{u\}=\GMM(\Phi,u(0)) 
\end{equation}
for the corresponding generating functional $\Phi$ defined by
\eqref{eq:80}.
An equivalent characterization of (\ref{eq:AGF introduction}) can be
given in terms of the \emph{discrete solutions} to the scheme:
\emph{all} the discrete solutions $U_\tau$ of \eqref{eq:76} will
converge to $u$ as $\tau\downarrow 0$.
Our result also covers the case of a
$\rmC^1$ function $\phi$ 
satisfying the lower quadratic bound (\ref{eq:coercive intro}).

Moreover, this reverse approximation can also be performed if
$\H$ has infinite dimension, for a particular class of solutions
(Theorem~\ref{thm:appmax}),
which is still sufficiently general to generate all the possible
solutions
by time reparametrization (Theorem~\ref{thm: max gf}).

\medskip In order to obtain an appropriate reverse approximation, we will introduce and
apply new techniques that seem of independent interest and give further 
information
on the approximation of the gradient flows \eqref{eq:73} in a finite and infinite
dimensional framework.

In Section 2 we will collect some preliminary material
and we will give a detailed account of notions of
\emph{approximability} of gradient flows (Section \ref{sec: subsec 2.4}),
in particular the notion of \emph{strong approximability} (which is equivalent to (\ref{eq:AGF introduction}) in the finite dimensional case) and the notion of \emph{strong approximability in every compact interval $[0, T]$} (which appears to be more fitting in the infinite dimensional setting lacking in compactness).    

\medskip
A first crucial concept in our analysis is a \emph{notion of partial order
between solutions to \eqref{eq:73}}. Such notion plays an important role in
any situation where non-uniqueness phenomena are present.
The basic idea is to study the family of all the solutions $u$ that share
the same range $\range u=u([0,\infty))$ in $\H$. 
On this class it is possible to introduce a {\normalcolor natural partial order} by saying that 
$u\succ v$ if there exists an increasing $1$-Lipschitz map $\sfz:[0,\infty)\to[0,\infty)$ such
that $u(t)=v(\sfz(t))$ for every $t>0.$ 

We will show in Theorem \ref{thm: max gf} that for a given range
$R=\range u$ there is always a distinguished
solution $v$ (called \emph{\minimal}), which induces all the other ones by such time
reparametrization. This solution has the remarkable property to cross
the critical set of the energy $\{w\in \H:\nabla \phi(w)=0\}$ in a 
Lebesgue negligible set of times (unless it becomes
eventually constant after some time $T_\star$, 
in that case it has the property in $[0,T_\star]$).

This analysis will be carried out for $\rmC^1$ solutions {\normalcolor to the Cauchy problem \eqref{eq:73} for a gradient flow} in an infinite dimensional Hilbert
space, but it can be {\normalcolor considerably generalized and adapted for abstract evolution problems \cite{fleissner-in-preparation} 
including general gradient flows in metric
spaces (under standard assumptions on the energy functional and on its metric slope as in \cite{AGS08}) and generalized semiflows (which have been introduced in \cite{ball2000continuity}).}

\medskip
In Section \ref{subsec: 4.1} we will study the general
problem to find Lipschitz perturbations of $\phi$ which confine
the discrete solutions of the Minimizing Movement scheme to a given
compact set $\cU$. We will find that a `penalization' with the distance
from $\cU$ is sufficient to obtain this property. 
The important thing here will be a precise quantitative estimate of appropriate `penalty' coefficients 
depending on the respective time step and on a sort of approximate invariance of $\cU$. 

In Section \ref{subsec: 4.2} we will obtain a first result on the reverse approximation of gradient flows. We will prove that every \minimal\ solution to (\ref{eq:73}) is approximable in the strong form (\ref{eq:AGF introduction}) by applying the estimates from Section \ref{subsec: 4.1} to suitably chosen compact subsets of its range. 

This result can be extended to infinite dimensional Hilbert spaces,
even if in the infinite dimensional case, 
the existence of solutions to gradient flows of $\rmC^1$ functionals is not guaranteed a priori (existence of a solution can be proved if $\nabla\phi$ is weakly continuous, see \cite[Theorem 7]{browder1964non}).  
However, if a solution exists, it always admits a \minimal\ 
reparametrization
and our result can be applied.

\medskip
The reparametrization technique and the technique of confining
discrete solutions to a given compact set provide a foundation for the
reverse approximation of the gradient flows. The last crucial step is
a reduction to the one dimensional case
and its careful analysis.
The detailed study of the one dimensional situation will be performed in Section
\ref{sec: one dimensional setting}. We will find a smoothing argument that allows to 
approximate any solution to (\ref{eq:73}) by a sequence of \minimal\ solutions for perturbed energies.
We can then base our proof of the reverse approximation (\ref{eq:AGF
  introduction}) for arbitrary solutions 
on the approximation by \minimal\ solutions (which are approximable in the form (\ref{eq:AGF introduction})) instead of working directly on the
discrete scheme. 

In Section \ref{sec: conclusion}
the one-dimensional result is `lifted' to arbitrary finite dimension
by a careful use of the Whitney extension Theorem (this is the only
point where we need a finite dimension): in this way,
we will obtain the reverse approximation result (\ref{eq:AGF introduction}) for arbitrary solutions to (\ref{eq:73}) in finite dimension.

\subsubsection*{Acknowledgments}
Part of this paper has been written while the authors were visiting
the
Erwin Schr\"odinger Institute for Mathematics and Physics
(Vienna), 
whose support is gratefully acknowledged. The first author gratefully acknowledges support from the Global-Challenges-for-Women-in-Math-Science program at Technische Universit\"at M\"unchen.

\subsection*{List of main notation}

\ 

\medskip
\halign{$#$\hfil\ &#\hfil
\cr
\H,\ \langle\cdot,\cdot\rangle,\ |\cdot|&Hilbert space, scalar product
and norm;
\cr
\dsfd T(u,v),\ \dsfd\infty(u,v)&distances between functions in $\H^{[0,T]}$ and
$\H^{[0,\infty)}$, \eqref{eq:42bis}--\eqref{eq:42};
\cr
\dsfD T(v,\cU),\ \dsfD\infty(v,\cU)&distances between a map $v$ and a
collection of maps $\cU$, \eqref{eq:45T}--\eqref{eq:45};
\cr
\solution \phi&solutions of the gradient flow equation \eqref{eq:1};
\cr
\truncatedsolution\phi&truncated solutions of 
equation \eqref{eq:1}, see before Theorem \ref{thm: max gf};
\cr
\minsolution\phi&class of \emph{minimal solutions} to \eqref{eq:1}, 
Definition \ref{def: max gf};
\cr
\critical\psi&subset of $x\in\H$ where $\nabla\psi(x)=0$;
\cr
T_\star(u)&minimal time after which $u$ is definitely constant,
\eqref{eq:92};
\cr
U_\tau&piecewise constant interpolant of a minimizing sequence
$(U^n_\tau)_{n\in \N}$;
\cr
\Lip[\psi]&Lipschitz constant of a real map $\psi:\H\to\R$,
\eqref{eq:40};
\cr
\succ&partial order in $\solution\phi$, Definition \ref{def: max gf};
\cr
\Phi(\tau,U,V)&functional characterizing the Minimizing Movement
scheme, \eqref{eq:35}--\eqref{eq:36};
\cr
\MM(\Phi,u_0)&Minimizing Movements, \eqref{eq:37};
\cr
\GMM(\Phi,u_0)&Generalized Minimizing Movements, \eqref{eq:38bis};
\cr
\MS\tau\psi{u_0},\ \MS\tau\psi{u_0,N}&Minimizing sequences,
\eqref{eq:33} and Remark \ref{rem:bounded-interval};
\cr
\DMM\tau\psi{u_0},\ {\normalcolor\DMM\tau\psi{u_0,T}}&Discrete solutions,
\eqref{eq:34} and Remark \ref{rem:bounded-interval};
\cr
\nn\tau T&$\min\{n\in \N:n\tau\ge T\}$, Remark
\ref{rem:bounded-interval};
\cr
\cU(\tau,T)&sampled values of a map $u$, \eqref{eq:55}
\cr
}
\section{Notation and preliminary results}
\label{sec:preliminary}

\subsection{Vector valued curves and compact convergence}
\label{subsec:compact-open}
Throughout the paper, let $(\H,\langle\cdot,\cdot\rangle)$ be a
Hilbert space with norm $|\cdot|:=\sqrt{\langle\cdot,\cdot\rangle}$.

A function $\psi:\H\to \R$ is Lipschitz 
if $\Lip[\psi]<\infty$, where $\Lip[\cdot]$ has been defined in \eqref{eq:40}.
$\Lip(\H)$  
will denote
the vector space of Lipschitz 
real functions 
on $\H$. 

$\rmC^1(\H)$ will denote the space of 
continuously differentiable real functions:
by Riesz duality, the differential $\mathrm
D\psi(x)\in \H'$
of $\psi\in \rmC^1(\H)$ at a point $x\in \H$ can be represented by a vector $\nabla
\psi(x)\in \H$. The set of stationary points will be denoted by
$\critical\psi:=\{v\in \H:\nabla
\phi(v)=0\}$.
Notice that 
a function in $\psi
\in \rmC^1(\H)$ belongs to $\Lip(\H)$ if and only if $x\mapsto
|\nabla \psi(x)|$ is bounded in $\H$.


Let $T\in (0,\infty)$; we introduce a distance on the vector space
$\H^{[0,T]}$ (resp.~$\H^{[0, +\infty)}$) 
of curves defined on $[0,T]$ (resp.~$[0, +\infty)$) with values in
$\H$ by setting
\begin{equation}
  \label{eq:42bis}
  \dsfd T(u,v):=\sup_{t\in [0,T]} 
  \Big(|u(t)-v(t)|\land
  1\Big)\quad
  \text{for every }u,v:[0,T]\to \H,
\end{equation}
\begin{equation}
  \label{eq:42}
  \dsfd\infty(u,v):=\sup_{t\ge 0} \frac 1{1+t}\Big(|u(t)-v(t)|\land
  1\Big)\quad
  \text{for every }u,v:[0,\infty)\to \H.
\end{equation}
$\dsfd T$ clearly induces the topology of uniform convergence on the interval
$[0,T]$.
It is not difficult to show that the distance $\dsfd\infty$ induces the
topology of compact convergence, i.e. the topology of uniform convergence on compact sets of $[0, +\infty)$: for every $T>0$ we have
\begin{equation}
  \label{eq:43}
  (1+T)^{-1}\dsfd T(u\restr{[0,T]},v\restr{[0,T]})
  \le 
  \dsfd\infty(u,v)
  \le \dsfd T(u\restr{[0,T]},v\restr{[0,T]})\lor (1+T)^{-1}
\end{equation}
so that a net $(u_\lambda)_{\lambda\in \Lambda}$ in $\H^{[0,\infty)}$
is
$\dsfd\infty$ convergent if and only if it is
convergent in the topology of compact convergence.

We will denote by $\range u$ the range of a function.

$\mathrm C^k([0,+\infty);\H)$ will be 
the vector space of $\mathrm
C^k$ curves with values in $\H$.
We will consider $\rmC^0([0,\infty);\H)$ as a (closed) subspace of 
$\H^{[0,\infty)}$ with the induced topology. We introduced the
distance \eqref{eq:42} on the bigger space $\H^{[0,\infty)}$ since we
will also consider
(discontinuous) piecewise constant paths with values in $\H$. 


For $T > 0$ and $\cU\subset\H^{[0, T]}, \ v\in\H^{[0, T]}$ we set  
\begin{equation}
  \label{eq:45T}
\dsfD T(v, \cU):= \sup_{u\in\cU}\dsfd T(v,u);\quad
\dsfD T(v,\cU):=+\infty \text{ if $\cU$ is empty,}
\end{equation}
and, similarly, 
for $\cU\subset\H^{[0, +\infty)}$ and $v\in\H^{[0, +\infty)}$ we define 
\begin{equation}
  \label{eq:45}
  \dsfD\infty(v,\cU):=\sup_{u\in \cU}\dsfd\infty(v,u);\quad
  \dsfD\infty(v,\cU):=+\infty \text{ if $\cU$ is empty.}
\end{equation}
Notice that $\dsfD T(v,\cU)$ (resp.~$\dsfD\infty(v,\cU)$)
is the Hausdorff distance
between the sets $\{v\}$ and $\cU$ induced by $\dsfd T$
(resp.~$\dsfd\infty$).

\subsection{Gradient flows}
\label{subsec:GF}
Let $\phi\in \rmC^1(\H)$ be given. 
$\solution\phi $ is defined as the collection of all 
curves $u\in \mathrm C^1([0, +\infty);\H)$ solving the gradient flow
equation
\begin{equation}
\label{eq:1}
u'(t) = -\nabla\phi(u(t)) 
\tag{GF}
\end{equation} 
in $[0,\infty)$. 
%
%
Let us collect some useful properties for $u\in\solution\phi$ which directly follow from the gradient flow equation (\ref{eq:1}). 

We first observe that $u$ satisfies
\begin{equation}
  \label{eq:3}
  |u'(t)|^2=|\nabla \phi(u(t))|^2=-\frac{\mathrm d}{\mathrm d
    t}\phi\circ u(t)\quad\text{for every }t\ge0,
\end{equation}
and thus  
\begin{equation}
  \label{eq:4}
  \phi(u(t_1))-\phi(u(t_2))=\int_{t_1}^{t_2}|u'(t)|^2\,\mathrm d t=
  \int_{t_1}^{t_2}|\nabla\phi(u(t))|\,|u'(t)|\,\mathrm d t=
  \int_{t_1}^{t_2}|\nabla\phi(u(t))|^2\,\mathrm d t
\end{equation}
for every $0\le t_1\le t_2$. 
In particular $\phi\circ u$ may take the same value at two points 
$t_1<t_2$ iff
$u$ takes a constant stationary value in $[t_1,t_2]$.

If $\phi$ is Lipschitz it is immediate to check that $u$ is also
Lipschitz and satisfies
\begin{equation}
  \label{eq:85}
  |u(t_2)-u(t_1)|\le \Lip[\phi]\,|t_2-t_1|\quad\text{for every
  }t_1,t_2\in [0,\infty).
\end{equation}
More generally, when $\phi$ satisfies \eqref{eq:coercive intro}, we
easily get
\begin{displaymath}
  \frac\rmd{\rmd t}\big(\phi(u(t))+\frac 1{\tau_*}|u(t)|^2+\phi_*\big)\le 
  \frac{1}{\tau_*^2} |u(t)|^2\le  \frac 2{\tau_*}\big(\phi(u(t))+\frac 1{\tau_*}|u(t)|^2+\phi_*\big)
\end{displaymath}
so that Gronwall Lemma and \eqref{eq:coercive intro} yield
\begin{equation}
  \label{eq:86}
  |u(t)|^2\le 2\tau_* C_0
  \mathrm e^{2t/\tau_*},\quad
  \phi(u(0))-\phi(u(t))\le C_0
  (1+\mathrm e^{2t/\tau_*}),
  \quad C_0
  :=\phi(u(0))+\frac1{\tau_*}|u(0)|^2+\phi_*.
\end{equation}
By applying H\"older inequality to \eqref{eq:4}
we thus obtain
\begin{equation}
  \label{eq:5}
  |u(t_2)-u(t_1)|
  \le \sqrt{C_0(1+\rme^{2T/\tau_*})} \,|t_2-t_1|^{1/2}\quad 
  \text{for every }t_1,t_2\in [0,T].
\end{equation}
\begin{lemma}
  \label{le:preliminary}
  Let $u\in \solution\phi$.
  \begin{enumerate}[(i)]
  \item $\range u$ is a connected set and the map
    $u:[0, +\infty)\to \range u$ is locally invertible around any
    point $x\in \range u\setminus \critical \phi$.
  \item
    $\range u$ is locally compact and it is
    compact if and only if $\phi$ attains its minimum in $\range u$ at
    some point $\bar u=u(\bar t)$ and $u$ is constant for
    $t\ge \bar t$.
  \item 
    The restriction of $\phi$ to $\range u$ is a homeomorphism with
    its image
    $\phi(\range u)\subset \R$.
  \item 
    If $u$ is not constant, then $\range u\setminus \critical\phi$ is dense in $\range u$, and
$\phi(\range u\setminus \critical\phi)$ is dense in $\phi(\range u)$.
  \end{enumerate}
\end{lemma}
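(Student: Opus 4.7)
\smallskip
\noindent\textbf{Strategy.} The proof rests on two elementary facts about the scalar function $\psi:=\phi\circ u:[0,\infty)\to\R$. First, $\psi$ is $\rmC^1$ and non-increasing, with $\psi'(t)=-|\nabla\phi(u(t))|^2\le 0$. Second, by \eqref{eq:4}, $\psi(t_1)=\psi(t_2)$ for $t_1<t_2$ forces $\int_{t_1}^{t_2}|u'|^2=0$ and hence $u$ to be constant on $[t_1,t_2]$ with value in $\critical\phi$. Together with the continuity of $u$, these two facts govern all four items; my plan is to prove (iii) first and then derive (i), (ii), (iv) from it.

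\smallskip
\noindent\textbf{Items (iii), (i), (ii).} Injectivity of $\phi|_{\range u}$ follows directly from the second fact, and continuity is trivial. For the continuity of the inverse I would introduce $\sigma(s):=\inf\{t\ge 0:\psi(t)=s\}$ on $I:=\psi([0,\infty))$, verify that $u\circ\sigma=(\phi|_{\range u})^{-1}$, and check continuity of $u\circ\sigma$ at $s\in I$ by testing against one-sided monotone sequences $s_n\to s$: the monotonicity of $\sigma$ produces monotone $t_n:=\sigma(s_n)$, their limit is pinned down by continuity of $\psi$ combined with the flat-interval identification from \eqref{eq:4}, and continuity of $u$ then gives the claim. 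Item (i) is then immediate: $\range u$ is connected as the continuous image of $[0,\infty)$, and at $x=u(t_0)\notin\critical\phi$ the continuity of $\psi'$ together with $\psi'(t_0)<0$ makes $\psi$ strictly decreasing near $t_0$, so $u=(\phi|_{\range u})^{-1}\circ\psi$ is a local homeomorphism at $t_0$. For (ii), (iii) identifies $\range u$ with the interval $I\subset\R$, hence local compactness; compactness of $\range u$ is equivalent to compactness of $I$, i.e.~to attainment of $\inf\psi$ at some finite $\bar t$, and \eqref{eq:4} then forces $u$ to be constant on $[\bar t,\infty)$, with $u(\bar t)\in\critical\phi$ realizing $\min_{\range u}\phi$.

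\smallskip
\noindent\textbf{Item (iv).} If $u$ is not constant then $\psi$ is not constant (again by \eqref{eq:4}), so $I$ is a non-degenerate interval. The set $\phi(\range u\cap\critical\phi)$ coincides with $\psi(\{t:\psi'(t)=0\})$, which has Lebesgue measure zero by Sard's theorem applied to the $\rmC^1$ scalar function $\psi:[0,\infty)\to\R$ (equivalently, by a direct partition-plus-MVT argument using uniform continuity of $\psi'$ on compacts). Hence this set has empty interior in $I$, giving density of $\phi(\range u\setminus\critical\phi)$ in $\phi(\range u)$; via the homeomorphism from (iii) this transfers to density of $\range u\setminus\critical\phi$ in $\range u$.

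\smallskip
\noindent\textbf{Main obstacle.} The only delicate point is the continuity of $(\phi|_{\range u})^{-1}$ in (iii): since $\psi$ is merely non-increasing and can be locally constant on intervals where $u$ sits at a critical value, there is no direct inverse function theorem to invoke, and one must use \eqref{eq:4} to bridge the limiting behavior of $\sigma$ and that of $u$. Everything else reduces to routine analysis of the non-increasing $\rmC^1$ function $\psi$.
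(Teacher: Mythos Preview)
Your proposal is correct and takes a genuinely different route from the paper. The paper argues in the order (ii)$\to$(iii): it first establishes local compactness of $\range u$ directly (intersecting $\range u$ with a superlevel set of $\phi$ yields a set of the form $u([0,t_1])$), and then proves (iii) by showing that $\phi|_{\range u}$ is a \emph{proper} map, so that its continuous bijective inverse is automatically continuous. You invert this dependence: you build the inverse of $\phi|_{\range u}$ explicitly as $u\circ\sigma$ with $\sigma(s)=\inf\{t:\psi(t)=s\}$, check its continuity by a monotone-sequence argument hinging on the flat-interval identity \eqref{eq:4}, and then read (ii) off the homeomorphism with an interval. Your route is slightly more hands-on but makes the mechanism completely explicit and avoids the properness detour. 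For (iv) the paper gives a one-line elementary argument---between any two distinct critical values $\varphi_1>\varphi_2$ in $\phi(\range u\cap\critical\phi)$, \eqref{eq:4} forces $\psi$ to strictly decrease somewhere on $(t_1,t_2)$, producing a regular value in $(\varphi_2,\varphi_1)$---whereas you invoke Sard's theorem for the $\rmC^1$ scalar map $\psi$; both are valid, but the paper's version needs no measure-theoretic input at all.
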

\begin{proof}
  \emph{(i)} is obvious. 
  In order to show \emph{(ii)}, let us fix
    $\bar u=u(\bar t)\in \range u$; if $\phi$ attains its minimum in
    $\range u$ at $\bar u$ then $\phi(u(t))=\phi(u(\bar t))$ for every
    $t\ge \bar t$ and {\normalcolor \eqref{eq:4} yields} that
    $\bar u\in \critical \phi$ and $u(t)\equiv u(\bar t)$ for every
    $t\ge \bar t$, so that $\range u$ is compact. The converse
    implication is obvious.
    
    If $\phi|_{\range u}$ does not take its minimum at
    $\bar u=u(\bar t)\in \range u$, there exists some $t_1>\bar t$
    such that $\delta:=\phi(\bar u)-\phi(u(t_1))>0$. Since $\phi$ is
    continuous, the set
    $U:=\{u\in\H \ | \ \phi(u)\ge \phi(\bar u)-\delta\}$ is a closed
    neighborhood of $\bar u$ and $\range u \cap U = u([0,t_1])$ which
    is compact.

    \medskip\noindent
    \emph{(iii)} By \eqref{eq:4} the restriction of $\phi$ to $\range u$ is 
    continuous and
    injective. In order to prove that it is an homeomorphism
    {\normalcolor it is sufficient to prove that 
    $\phi|_{\range u}$
    is proper, i.e.~the counter image of every compact set in $J:=\phi(\range u)\subset \R$ is
    compact.}
    This property is obvious if $\range u$ is compact;
    if $\range u$ is not compact, then $\phi$ does not take its
    minimum on $\range u$ and $J$ is an interval of the form
    $(\varphi_-,\varphi_+]$ where $\varphi_+=\phi(u(0))$ 
    and $\varphi_-=\inf_{\range u}\phi$.
    Therefore any compact subset of $J$ is included in an interval 
    of the form
    $[\phi(u(\bar t)),\varphi_+]$ and its counter image 
    is a closed subset of the compact set $u([0,\bar t])$
    (recall that $u$ is constant in each interval where $\phi\circ u$
    is constant).
    
    \medskip\noindent
    \emph{(iv)} Let $\varphi_i=\phi(u(t_i))$, $i=1,2$, be two distinct
    points in $\phi(\range u\cap \critical\phi)$. 
    Assuming that $t_1<t_2$, \eqref{eq:4} shows that 
    there exists a point $\bar t\in (t_1,t_2)$ such that 
    $\nabla\phi(u(\bar t))\neq 0$, so that 
    $\bar\varphi=\phi(u(\bar t))$ belongs to 
    $(\varphi_2,\varphi_1)\setminus \phi(\critical\phi)$.
    We deduce that $\phi(\range u\setminus \critical\phi)$ is dense
    in $\phi(\range u)$ and, by the previous claim, 
    that 
    $\range u\setminus \critical\phi$ is dense in $\range u$.
\end{proof}
\begin{notation}
  If $u\in \rmC^0([0,\infty);\H)$ 
  we will set
  \begin{equation}
    \label{eq:92}
      T_\star(u):=
      \inf\big\{t\in [0,+\infty):
      u(s)=u(t)\quad\text{for every }s\ge t\big\}
  \end{equation}
  with the usual convention $T_\star(u):=+\infty$ if the 
  argument of the 
  infimum in \eqref{eq:92} is empty.
\end{notation}
It is not difficult to check that the map $\normalcolor \mathcal T_\star: \rmC^0([0,\infty);\H) \to [0, +\infty], \ u\mapsto T_\star(u),$ 
is lower semicontinuous with respect to 
the topology of compact convergence in $\rmC^0([0,\infty);\H)$.

By Lemma \ref{le:preliminary}(ii), if $u\in \solution \phi$ then
  \begin{equation}
    T_\star(u)<\infty\quad\text{if and only if}\quad
    \text{$\range u$ is compact};
\label{eq:84}
\end{equation}
if $\range u$ is compact
then $u(t)=u_\star:=u(T_\star(u))$ for every $t\ge
T_\star(u)$ and $u_\star$ is a stationary point.


\subsection{Minimizing movements}\label{subsec: 2.3}
Let 
a function $\psi:\H\to\R$, 
a time step $\tau>0$, and an initial value $u_0\in \H$ be given.

We consider
the (possibly empty) set 
$\MS\tau\psi{u_0}$ 
of \emph{Minimizing Sequences} $(U^n_\tau)_{n\in \N}$
such that
$U^0_\tau=u_0$ and 
\begin{equation}
  \label{eq:33}
  \frac1{2\tau}|U^n_\tau-U^{n-1}_\tau|^2+\psi(U^n_\tau)\le 
  \frac1{2\tau}|V-U^{n-1}_\tau|^2+\psi(V)
  \quad
  \text{for every }V\in \H, \quad n\ge 1.
\end{equation}
We can associate a discrete sequence satisfying \eqref{eq:33} with its piecewise constant interpolation $U_\tau:[0, \infty)\to
\H$ given by 
\begin{equation}
  \label{eq:34}
  U_\tau(0) := u_0,\qquad
  U_\tau(t):=U^n_\tau\quad\text{if }t\in ((n-1)\tau,n\tau].
\end{equation}
(\ref{eq:34}) can be equivalently expressed as 
\begin{displaymath}
U_\tau(t):=\sum_{n\in \N}U^n_\tau \nchi(t/\tau-(n-1)) \quad \text{for } t > 0,
\end{displaymath}
in which $\nchi:\R\to\R$ denotes the characteristic function of the interval
$(0,1]$. 
We call $\DMM\tau{\psi}{u_0}$ the class of discrete solutions 
$U_\tau$ at
time step $\tau>0$, which admit the previous representation \eqref{eq:34} in terms
of solutions to (\ref{eq:33}). 
%
\begin{remark}[Bounded intervals]
  \label{rem:bounded-interval}
  \upshape
  Sometimes it will also be useful to deal with approximations defined
  in a bounded interval $[0,T]$, involving finite minimizing
  sequences. 
  For $N\in \N$ we call 
  \begin{equation}
  \text{$\MS\tau\psi{u_0,N}$ 
  the set of sequences $(U^n_\tau)_{0\le n\le N}$
  satisfying \eqref{eq:33}.}
\label{eq:94}  
\end{equation}
  Similarly, 
  for a given a final time
  $T\in (0,+\infty)$ we set
  \begin{equation}
    \label{eq:95}
  \normalcolor  \nn\tau T:=\min \{n\in \N:n\tau\ge T\},
  \end{equation}
  and we define $\DMM\tau{\psi}{u_0, T}$ 
  as the collection of all the
  piecewise constant functions $U_\tau: [0, T]\to\H$ satisfying
  \eqref{eq:34} in their domain of definition, for some
  $(U_\tau^n)_n\in \MS\tau\psi{u_0,\nn\tau T}$.
\end{remark}

Let us now assign a
family of functions $\phi_\tau: \H \to \R$ depending on the
parameter $\tau\in (0,\tau_o)$ 
and
define the functional 
$\Phi:(0,\tau_o)\times \H\times \H\to \R$ as
\begin{equation}
  \label{eq:35}
  \Phi(\tau,U,V):=\frac1{2\tau}|V-U|^2+\phi_\tau(V). 
\end{equation}
\eqref{eq:33} for the choice $\psi:=\phi_\tau$ is equivalent to 
\begin{equation}
  \label{eq:36}
  U^0_\tau:=u_0;\qquad
  U^n_\tau\in \operatorname{argmin}\Phi(\tau,U^{n-1}_\tau,\cdot)\quad
  \text{for every }n\ge 1.
\end{equation}
According to \cite{DeGiorgi93}, a curve $u:[0,+\infty)\to\H$ is called
a Minimizing Movement associated to $\Phi$ if 
there exist discrete solutions $U_\tau\in 
\DMM\tau{\phi_\tau}{u_0}$ such that 
\begin{equation}
  \label{eq:37}
  \lim_{\tau\downarrow0} U_\tau(t)=u(t)\quad\text{for every }t\ge0.
\end{equation}
$\MM(\Phi,u_0)$ denotes the collection of all the Minimizing
Movements.

A curve $u: [0, +\infty)\to\H$ is called a Generalized Minimizing Movement \cite{DeGiorgi93} associated to $\Phi$  
if there exist a decreasing sequence $k\mapsto
\tau(k)\downarrow0$ 
and corresponding $U_{\tau(k)}\in
\DMM{\tau(k)}{\phi_{\tau(k)}}{u_0}$ such that 
\begin{equation}
  \label{eq:38bis}
  u(t)=\lim_{k\to\infty} U_{\tau(k)}(t) \quad\text{for every }t\ge0.
\end{equation}
$\GMM(\Phi,u_0)$ denotes the collection of all the Generalized Minimizing
Movements. It clearly holds that $\MM(\Phi, u_0)\subset\GMM(\Phi, u_0)$. 

\begin{remark}[Quadratic lower bounds]
  \label{rem:QLB}
  \upshape
  If for some $\tau_*>0$
  \begin{equation}
    \label{eq:112}
    \inf_V \Phi(\tau_*,u_0,V)=-A>-\infty
  \end{equation}
  (this happens, in particular, if 
  $\MS\tau{\phi_{\tau_*}}{u_0,N}$ is nonempty)
  and $ \Lip[\phi_{\tau_*}-\phi]\le \ell$,
  then $\phi$ 
  satisfies the quadratic lower bound 
  \begin{equation}
    \label{eq:106}
    \phi(x)\ge (\phi(u_0)-
    \phi_{\tau_*}(u_0))
    -A-\ell/2 -\frac{\ell\tau_*+1}{2\tau_*}|x-u_0|^2 \quad \text{for every }
    x\in\H.
  \end{equation}
  This shows that the lower quadratic bound 
  of \eqref{eq:coercive intro} is a natural assumption in the
  framework of 
  minimizing movements.
  \eqref{eq:106} follows by 
  the fact that 
  $\phi(x)-
  \phi(u_0)-(\phi_{\tau_*}(x)-\phi_{\tau_*}(u_0))\ge
  -\Lip[\phi_\tau-\phi_{\tau_*}]\,|x-u_0|\ge -\ell |x-u_0|$.
   
   Similarly, if $\phi$ satisfies \eqref{eq:coercive intro} and 
   $\Lip[\phi_\tau-\phi]\le \ell$, we get
   \begin{equation}
     \label{eq:113}
     \phi_\tau(x)\ge (\phi_\tau(0)-
     \phi(0))
     -\phi_*-\frac\ell2 -\frac{\ell\tau_*+1}{2\tau_*}|x|^2 \quad \text{for every }
    x\in\H.
   \end{equation}
\end{remark}
It is a well known fact that if $\phi\in \rmC^1(\H)\cap \Lip(\H)$ and
$\lim_{\tau\downarrow0}\Lip[\phi_\tau-\phi]=0$, then every 
$u\in
\GMM(\Phi,u_0)$ 
solves \eqref{eq:1} with initial
datum $u_0$. We present here the proof of this statement 
(including the case of $\phi\in\rmC^1(\H)$ satisfying 
\eqref{eq:coercive intro}) and a few related
results that will turn to be useful in the following.
\begin{lemma}[A priori estimates for minimizing sequences]
  \label{le:apriori}
  Let $\phi\in \rmC^1(\H)$ satisfy
  \eqref{eq:coercive intro}, let $\phi_\tau:\H\to\R$ be
  such that $\normalcolor \ell_\tau:=\Lip[\phi_\tau-\phi]<\infty$,
  let $T>0$ and $(U^n_\tau)_{0\le n\le N}
  \in \MS\tau{\phi_\tau}{u_0,N}$, $\normalcolor 1\le N\le\nn\tau T$.
  \begin{enumerate}[(i)]
  \item For every $1\le n\le N$ we have
    \begin{equation}
      \label{eq:97}
      \left|\frac{U^n_\tau-U^{n-1}_\tau}\tau+\nabla\phi(U^n_\tau)\right|\le 
      \ell_\tau.
    \end{equation}
  \item 
    Suppose that $\phi(u_0)\lor |u_0|^2\le  F$, $\ell_\tau\le 1$,
    $\tau\le \tau_*/16$.
    There exists a positive constant $C=C(\phi_*,\tau_*,F,T)$ 
    only
    depending
    on $\phi_*,\tau_*,F,T$, such that
    \begin{equation}
      \label{eq:111}
      \sup_{0\le n\le N}|U^n_\tau|^2\le C,\quad
    \normalcolor  \frac {1}{2\tau}\sum_{n=1}^N|U^n_\tau-U^{n-1}_\tau|^2
      \le \phi_\tau(u_0)-\phi_\tau(U^N_\tau)\le C
    \end{equation}
  \end{enumerate}
\end{lemma}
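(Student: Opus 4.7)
My plan is to handle (i) by a directional variation at the minimizer and (ii) by a discrete Gronwall argument modeled on the continuous bound \eqref{eq:86}.

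For \emph{(i)}, I fix $n\in\{1,\dots,N\}$ and test the minimality inequality \eqref{eq:33} against the competitor $V=U^n_\tau+tw$ with $t>0$ and $|w|=1$. Expanding $|V-U^{n-1}_\tau|^2$ around $U^n_\tau$ and rearranging produces
\[
\phi_\tau(U^n_\tau+tw)-\phi_\tau(U^n_\tau)+\tfrac{t}{\tau}\langle U^n_\tau-U^{n-1}_\tau,w\rangle+\tfrac{t^2}{2\tau}\ge 0.
\]
I then split $\phi_\tau=\phi+(\phi_\tau-\phi)$: the $\rmC^1$ increment of $\phi$ equals $t\langle\nabla\phi(U^n_\tau),w\rangle+o(t)$, while the increment of $\phi_\tau-\phi$ is at most $\ell_\tau t$ by the Lipschitz assumption. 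Dividing by $t$ and letting $t\downarrow 0$ gives $\langle\nabla\phi(U^n_\tau)+(U^n_\tau-U^{n-1}_\tau)/\tau,w\rangle\ge-\ell_\tau$ for every unit $w$; picking $w$ opposite to $\nabla\phi(U^n_\tau)+(U^n_\tau-U^{n-1}_\tau)/\tau$ yields \eqref{eq:97}.

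For \emph{(ii)}, testing \eqref{eq:33} against $V=U^{n-1}_\tau$ gives the discrete energy inequality $\tfrac{1}{2\tau}|U^n_\tau-U^{n-1}_\tau|^2+\phi_\tau(U^n_\tau)\le\phi_\tau(U^{n-1}_\tau)$; telescoping, together with the monotonicity $\phi_\tau(U^n_\tau)\le\phi_\tau(u_0)$, immediately yields the middle inequality of \eqref{eq:111}. To close the chain I must bound $|U^N_\tau|$ — which then controls $\phi_\tau(u_0)-\phi_\tau(U^N_\tau)$ through the quadratic lower bound \eqref{eq:113} of Remark~\ref{rem:QLB} applied with $\ell_\tau\le 1$. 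Since the scheme is invariant under additive constants in $\phi_\tau$, I normalize $\phi_\tau(u_0)=\phi(u_0)\le F$. Mimicking the continuous Gronwall argument that produced \eqref{eq:86}, I introduce
\[
F_n:=\phi_\tau(U^n_\tau)+\kappa|U^n_\tau|^2,
\]
with $\kappa>(\tau_*+1)/(2\tau_*)$ depending only on $\tau_*$, so that \eqref{eq:113} yields $|U^n_\tau|^2\le\mu(F_n+A)$ with $\mu=\mu(\tau_*)$ and $A=A(\phi_*,\tau_*,F)$. Combining the energy inequality with the polarization identity $|U^n_\tau|^2-|U^{n-1}_\tau|^2=2\langle U^n_\tau,U^n_\tau-U^{n-1}_\tau\rangle-|U^n_\tau-U^{n-1}_\tau|^2$ and absorbing $2\kappa\langle U^n_\tau,U^n_\tau-U^{n-1}_\tau\rangle$ against the dissipation via AM--GM produces
\[
F_n-F_{n-1}\le 2\kappa^2\tau\,|U^n_\tau|^2\le\lambda\tau\,(F_n+A),\qquad\lambda=\lambda(\tau_*).
\]

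The smallness condition $\tau\le\tau_*/16$ guarantees $\lambda\tau\le 1/2$, so the recursion rewrites as $(F_n+A)\le(1-\lambda\tau)^{-1}(F_{n-1}+A)\le(1+2\lambda\tau)(F_{n-1}+A)$ and iterates to $F_N+A\le e^{2\lambda T}(F_0+A)$. Because after normalization $F_0\le(1+\kappa)F$, this bounds $F_N$ — and thus $|U^n_\tau|^2$ and $\phi_\tau(u_0)-\phi_\tau(U^N_\tau)$ — by a constant depending only on $\phi_*,\tau_*,F,T$. The main obstacle I anticipate is tuning $\kappa$ and $\lambda$ so that the AM--GM absorption is compatible with the precise smallness $\tau\le\tau_*/16$, and keeping all error contributions from the perturbation $\phi_\tau-\phi$ uniform in $\tau$; once these constants are fixed, everything else is routine bookkeeping.
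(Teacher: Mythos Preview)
Your proof of (i) is correct and essentially identical to the paper's: both test minimality at $U^n_\tau+\theta v$, split $\phi_\tau=\phi+(\phi_\tau-\phi)$, use the $\rmC^1$ expansion of $\phi$ and the Lipschitz bound on the remainder, divide by $\theta$ and send $\theta\downarrow 0$.

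For (ii) the paper simply invokes \cite[Lemma 3.2.2]{AGS08} with the normalization $\phi_\tau(0)=\phi(0)$, so your self-contained discrete Gronwall argument is a genuinely different (and more transparent) route. The telescoping step and the reduction to a recursion for $F_n=\phi_\tau(U^n_\tau)+\kappa|U^n_\tau|^2$ are correct. However, the obstacle you flag is real and, as written, not resolved: if you take the quadratic coefficient from \eqref{eq:113} with $\ell=1$, namely $c=(\tau_*+1)/(2\tau_*)$, then the optimal choice $\kappa=2c$ gives $\lambda=8c=4(\tau_*+1)/\tau_*$, and $\lambda\cdot\tau_*/16=(\tau_*+1)/4$, which exceeds $1/2$ as soon as $\tau_*>1$. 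So the recursion $(1-\lambda\tau)(F_n+A)\le F_{n-1}+A$ cannot be closed under the stated hypothesis $\tau\le\tau_*/16$.

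The fix is minor but necessary: do not use \eqref{eq:113} as a black box. Instead re-derive the lower bound on $\phi_\tau$ with an AM--GM weighted by $\tau_*$, e.g.\ $\ell_\tau|x|\le \tfrac{1}{4\tau_*}|x|^2+\ell_\tau^2\tau_*$, so that (with $\ell_\tau\le 1$) the quadratic coefficient becomes $\tfrac{1}{2\tau_*}+\tfrac{1}{4\tau_*}=\tfrac{3}{4\tau_*}$ rather than $\tfrac{1}{2}+\tfrac{1}{2\tau_*}$. Then taking $\kappa=\tfrac{3}{2\tau_*}$ yields $\mu=\tfrac{4\tau_*}{3}$ and $\lambda=2\kappa^2\mu=6/\tau_*$, whence $\lambda\tau\le 6/16<1/2$ for every $\tau\le\tau_*/16$, and your iteration goes through verbatim with constants depending only on $\phi_*,\tau_*,F,T$.
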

\begin{proof}
   \emph{(i)} Let us set $\psi_\tau:=\phi_\tau-\phi$.
   The minimality condition \eqref{eq:36} yields for every $W\in \H$
  \begin{displaymath}
    \phi(W)+\frac 1{2\tau}|U^{n-1}_\tau-W|^2-\phi(U^n_\tau)-\frac1{2\tau}|U^{n-1}_\tau-U^n_\tau|^2\ge
    \psi_\tau(U^n_\tau)-\psi_\tau(W)
    \ge -\ell_\tau |U^n_\tau-W|.
  \end{displaymath}
  We can choose $W:=U^n_\tau+\theta v$, divide the above inequality by $\theta>0$  and pass to the limit as $\theta\downarrow 0$ obtaining
  \begin{displaymath}
    \langle \tau^{-1}(U^n_\tau-U^{n-1}_\tau)+\nabla\phi(U^n_\tau),v\rangle\ge -\ell_\tau |v|\quad \text{for every $v\in \H$},
  \end{displaymath}
  which yields \eqref{eq:97}.
 
\medskip\noindent
\emph{(ii)} It follows by \cite[Lemma 3.2.2]{AGS08}, by 
using \eqref{eq:113} and $u_*:=0$. Up to the addition of 
a constant to $\phi_\tau$, it is not restrictive
to
assume that $\phi_\tau(0)=\phi(0)$.
\end{proof}
\begin{lemma}
  \label{le:well-known}
  Let $\phi\in \rmC^1(\H)$ 
  and let $\phi_\tau:\H\to\R$, $\tau\in (0, \tau_o)$, 
  such that $\lim_{\tau\downarrow0}\Lip[\phi_\tau-\phi]=0$. 
  \begin{enumerate}[(i)]
  \item If there exist a vanishing decreasing sequence 
    $k\mapsto \tau(k)$ and discrete solutions in a bounded interval
    $U_{\tau(k)}\in \DMM{\tau(k)}{\phi_{\tau(k)}}{u_0,T}$ such that 
    $u(t)=\lim_{k\to\infty}U_{\tau(k)}(t)$ for every $t\in [0,T]$, 
    then $u\in \rmC^1([0,T];\H)$ is a solution to \eqref{eq:1} with
    initial datum $u(0)=u_0$. 
  \item If $u\in \GMM(\Phi,u_0)$ then $u\in \solution\phi$.
  \item Let $T>0$ and $U_\tau\in \DMM\tau{\phi_\tau}{u_0,T}$, $\tau\in
    (0,\tau_o)$, be a family of discrete solutions taking values in a compact subset $\cK\subset \H$.
    Then for every decreasing and
    vanishing sequence
    $k\mapsto \tau(k)$ there exist a further subsequence (still denoted by
    $\tau(k)$) and a limit function $u\in \rmC^1([0,T];\H)$ such that 
    \begin{equation}
      \label{eq:96}
      \lim_{k\to\infty} \sfd_T(U_{\tau(k)},u)=0,\quad
      u\text{ is a solution to \eqref{eq:1} in $[0,T]$}.
    \end{equation}
    \item Let $U_\tau\in \DMM\tau{\phi_\tau}{u_0}$, $\tau\in
    (0,\tau_o)$, be a family of discrete solutions satisfying the following property: for every $T>0$ 
    there exist $\bar \tau\in (0,\tau_o)$ and a compact set
    $\cK\subset \H$ such that 
    $U_\tau([0,T])\subset \cK$ for every $\tau\in (0,\bar \tau)$.
    Then for every decreasing and
    vanishing sequence
    $k\mapsto \tau(k)$ there exist a further subsequence (still denoted by
    $\tau(k)$) and a limit function $u\in \rmC^1([0,\infty);\H)$ such that 
    \begin{equation}
      \label{eq:96bis}
      \lim_{k\to\infty} \sfd_\infty(U_{\tau(k)},u)=0,\quad
      u\in \solution\phi.
    \end{equation}
  \end{enumerate}
\end{lemma}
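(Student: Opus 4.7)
All four parts will follow from the discrete Euler relation \eqref{eq:97} and the energy estimate \eqref{eq:111} of Lemma \ref{le:apriori}. I would argue (i) in some detail and then deduce the remaining parts as essentially formal consequences.

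\textbf{For (i).} Let $\bar U_\tau$ denote the piecewise constant interpolant \eqref{eq:34} and $\tilde U_\tau$ the piecewise affine interpolant of the minimizing sequence $(U^n_\tau)_n$. On every interval $((n-1)\tau, n\tau)$, the estimate \eqref{eq:97} reads
\[
\tilde U_\tau'(t)+\nabla\phi(\bar U_\tau(t))=\rho_\tau(t),\qquad |\rho_\tau(t)|\le \ell_\tau,
\]
while \eqref{eq:111} gives $\int_0^T |\tilde U_\tau'|^2\,\rmd t\le 2C$. Consequently $\tilde U_\tau$ is uniformly H\"older-$\tfrac12$ with constant $\sqrt{2C}$, $\nabla\phi\circ\bar U_\tau$ is bounded in $L^2([0,T];\H)$, and each individual jump $|U^n_\tau-U^{n-1}_\tau|$ is at most $\sqrt{2C\tau}$, so $\|\bar U_\tau - \tilde U_\tau\|_\infty\to 0$. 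Integrating the displayed relation yields
\[
\bar U_\tau(n\tau) - u_0 = -\int_0^{n\tau}\nabla\phi(\bar U_\tau(s))\,\rmd s + R_\tau(n),\qquad |R_\tau(n)|\le T\ell_\tau.
\]
The equicontinuity of $\tilde U_\tau$ transfers to the pointwise limit $u$, which therefore inherits H\"older-$\tfrac12$ regularity; continuity of $\nabla\phi$ then forces $\nabla\phi(\bar U_\tau(s))\to \nabla\phi(u(s))$ at every $s$. Combined with the uniform $L^2$-bound, a Vitali-type argument upgrades this to weak convergence $\nabla\phi\circ\bar U_\tau\rightharpoonup \nabla\phi\circ u$ in $L^2([0,T];\H)$, and passing to the limit in the integral identity produces
\[
u(t) - u_0 = -\int_0^t \nabla\phi(u(s))\,\rmd s,\qquad t\in[0,T],
\]
so $u\in\rmC^1([0,T];\H)$ solves \eqref{eq:1} with $u(0) = u_0$.

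\textbf{For (ii)--(iv).} Part (ii) is immediate: apply (i) on each bounded interval $[0,T]$ to the sequence witnessing $u\in\GMM(\Phi,u_0)$. For (iii), the uniform H\"older-$\tfrac12$ equicontinuity combined with values in the compact set $\cK$ allows Arzel\`a--Ascoli to extract from $\tilde U_{\tau(k)}$ a uniformly convergent subsequence on $[0,T]$; since $\|\tilde U_\tau - U_\tau\|_\infty\to 0$, the same limit $u$ is reached by $U_{\tau(k)}$, giving $\dsfd T(U_{\tau(k)}, u)\to 0$, and (i) identifies $u$ as a solution of \eqref{eq:1}. Part (iv) then follows by a standard diagonal extraction over an exhausting sequence $T_m\uparrow\infty$, using that $\dsfd\infty$-convergence is equivalent to uniform convergence on every compact time subinterval by \eqref{eq:43}.

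\textbf{Main obstacle.} The most delicate point is the identification of the weak $L^2$ limit of $\nabla\phi\circ\bar U_\tau$ with $\nabla\phi\circ u$ in part (i): without compactness of the range of $u$ in $\H$, the dominated convergence theorem is not directly available, and one must combine the pointwise convergence with the uniform $L^2$-bound supplied by \eqref{eq:97}--\eqref{eq:111}. Parts (iii) and (iv) sidestep this issue because $\nabla\phi$ is bounded on $\cK$ by continuity, so the passage to the limit reduces to a straightforward dominated convergence argument.
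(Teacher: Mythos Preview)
Your approach matches the paper's: piecewise affine interpolant, equi-H\"older bound from \eqref{eq:111}, approximate Euler relation \eqref{eq:97}, passage to the limit in the integrated equation; parts (ii)--(iv) are handled identically (diagonal argument for (iv), Arzel\`a--Ascoli for (iii)).

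Two comments. First, invoking \eqref{eq:111} requires $\phi$ to satisfy the lower quadratic bound \eqref{eq:coercive intro}, which is not among the hypotheses here; the paper fills this gap via Remark~\ref{rem:QLB}, observing that the mere existence of a minimizing sequence for some small $\tau$ already forces \eqref{eq:coercive intro}. You should mention this.

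Second, the ``main obstacle'' you flag dissolves once you notice that equicontinuity plus pointwise convergence on the compact interval $[0,T]$ automatically gives \emph{uniform} convergence of $\tilde U_{\tau(k)}$ (hence of $U_{\tau(k)}$) to $u$. Then $u([0,T])$ is compact in $\H$, the values $U_{\tau(k)}(s)$ eventually lie in a fixed compact neighbourhood of that set, $|\nabla\phi|$ is bounded there, and dominated convergence handles $\int_0^t \nabla\phi(U_{\tau(k)}(s))\,\rmd s$ directly---no Vitali or weak-$L^2$ detour is needed. This is exactly how the paper proceeds. For (iii), note also that the affine interpolant takes values in $\overline{\mathrm{co}(\cK)}$ rather than $\cK$ itself; the paper remarks (citing \cite{Rudin91}) that this closed convex hull is still compact, which is what Arzel\`a--Ascoli actually requires.
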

\begin{proof}
  \emph{(i)} 
  Let us call $\hat U_\tau(t)$ the piecewise linear interpolant
  of the values $U^n_\tau$, $0\le n\le \nn\tau T$ of the minimizing
  sequence associated to $U_\tau$:  
  $\hat U_\tau(t)=\sum_{n=0}^{\nn\tau T}U^n_\tau \hat\nchi(t/\tau-n)$ 
  where $\hat \nchi(t)=(1-|t|)\lor 0$.
  
  Since $\MS{\tau(k)}{\phi_{\tau(k)}}{u_0,T}$ are not empty and
  $\Lip[\phi_\tau-\phi]\to0$, by Remark \ref{rem:QLB}
  we deduce that $\phi$ satisfies the lower quadratic bound 
  \eqref{eq:coercive intro}. 
  By Lemma \ref{le:apriori}(ii) 
  we deduce that there exists $\tau_\star\in (0,\tau_o)$ sufficiently small
  such that any curve $\hat U_{\tau}$ is equi H\"older continuous for
  $\tau\le \tau_\star$,
  i.e.~there exists a constant $C$ independent of $\tau$ such that 
  \begin{equation}
    \label{eq:104}
    |\hat U_{\tau}(t)-\hat U_{\tau}(s)|\le C|t-s|^{1/2}\quad
    \text{for every }s,t\in [0,T],\ \tau\in (0,\tau_\star),
  \end{equation}
  and
  \begin{equation}
    \label{eq:105}
    \dsfd T(\hat U_\tau,U_\tau)\le C\sqrt \tau\quad
    \tau\in (0,\tau_\star).
  \end{equation}
  \eqref{eq:105} shows that $\hat U_\tau$ has the same limit points of
  $U_\tau$; since $\hat U_\tau$ is equi-H\"older, 
  the pointwise convergence $U_{\tau(k)}\to u(t)$ as $k\to\infty$
  for every $t\in [0,T]$ implies the uniform convergence of
  $U_{\tau(k)}$ and of $\hat U_{\tau(k)}$ to the same limit $u$, which 
  belongs to $\rmC^0([0,T];\H)$.

  Since $\hat U_\tau'(t)=\tau^{-1}(U^n_\tau-U^{n-1}_\tau)$ in each
  interval $((n-1)\tau,n\tau)$, we obtain from \eqref{eq:97}
  \begin{equation}
    \label{eq:100}
    \big|\hat U_\tau'(t)+\nabla\phi(U_\tau(t))\big|\le
    \Lip[\phi_\tau-\phi]
    \quad\text{for every }
    t\in [0,T]\setminus \{h\tau:0\le h\le \nn\tau T\}.
  \end{equation}
  We can then pass to the limit in (the integrated version of)
  \eqref{eq:100} for $\tau=\tau(k)$ 
  to obtain that 
  \begin{equation}
    \label{eq:101}
    u(t)=u_0-\int_0^t \nabla\phi(u(r))\,\rmd r\quad
    \text{for every }t\in [0,T],
  \end{equation}
  which shows that $u\in \rmC^1([0,T];\H)$ is a solution to \eqref{eq:1}.
  
  Let us remark that when $\phi$ is Lipschitz a reinforced version
  of \eqref{eq:104} and
  \eqref{eq:105}
  follows directly from \eqref{eq:97}, which yields
  \begin{equation}
    \label{eq:98}
    |U^n_\tau-U^{n-1}_\tau|\le L_\tau\tau\quad\text{for }1\le
    n\le \nn\tau T,\quad
    \dsfd T(U_\tau,\hat U_\tau)\le L_\tau\tau,\quad
    \normalcolor \Lip(\hat U_\tau)\le L_\tau,
  \end{equation}
  where $L_\tau:=\Lip[\phi]+\ell_\tau$.
  
  \medskip\noindent
  \emph{(ii)} The proof is completely analogous to (i).
  
  \medskip\noindent
  \emph{(iii)}
  We observe that $\hat U_{\tau(k)}$ takes values in the closed convex
  hull
  $\overline{\mathrm{co}(\cK)}$, which is still a compact subset of $\H$
  (see, e.g., \cite[Theorem 3.20]{Rudin91}). Since $k\mapsto
  U_{\tau(k)}$ is eventually equi-H\"older by \eqref{eq:104}, 
  Ascoli-Arzel\`a Theorem yields the relative compactness of the
  sequence
  in the uniform topology. We can apply the previous Claim (i).

  \medskip\noindent  
  \emph{(iv)}
  We can apply the previous point (iii) and a standard diagonal argument.  
\end{proof}
\subsection{De Giorgi conjecture and notions of approximability}\label{sec: subsec 2.4}
If the generating function
$\Phi$ of \eqref{eq:35} is
induced by perturbations $\phi_\tau$ converging to $\phi\in \rmC^1(\H)$ in
the Lipschitz seminorm, then 
Lemma \ref{le:well-known}(ii) shows that 
$\GMM(\Phi,u_0)\subset \solution\phi$.

The most challenging part of 
De Giorgi's conjecture deals with the opposite direction. 
It can be equivalently formulated in the following way:
\begin{quote}
  \begin{itshape} Suppose that $\H$ has finite dimension,
    $\phi\in \rmC^1(\H)\cap \Lip(\H)$, and let a solution
    $u\in \solution\phi$ be given. There exist a family of functions
    $\phi_\tau\in \Lip(\H)$, a decreasing sequence
    $k\mapsto \tau(k)\downarrow0$ and corresponding
    $U_{\tau(k)}\in \DMM{\tau(k)}{\phi_{\tau(k)}}{u(0)}$ such that
    \begin{equation}
      \label{eq:46}
      \lim_{\tau\downarrow0}\Lip[\phi_\tau-\phi]= 0,\quad
      \lim_{k\to\infty}\dsfd\infty(u,U_{\tau(k)})= 0.
    \end{equation}
  \end{itshape}
\end{quote}
We will introduce a stronger property, based on the set distance
introduced in \eqref{eq:45}.
\begin{definition}[Strongly approximable solutions]
  \label{def:AGF}
  Let $\phi\in \rmC^1(\H)$. 
  We say that a solution $u\in \solution\phi$ is a strongly approximable solution if 
  there exists a family of 
  perturbations
  $\phi_\tau:\H\to\R$, 
  $\tau\in (0,\tau_o)$, such that 
  \begin{equation}
    \label{eq:49}
    \lim_{\tau\downarrow0}\Lip[\phi_\tau-\phi]=0,\quad
    \lim_{\tau\downarrow0}\dsfD\infty(u,\DMM\tau{\phi_\tau}{u(0)})=0. 
  \end{equation} 
  We denote by $\appsolution\phi$ the class of strongly approximable solutions.
\end{definition}
The second part of \eqref{eq:49} is equivalent to the following property: for every $\tau>0$ sufficiently small the set 
$\DMM\tau{\phi_{\tau}}{u(0)}$ is nonempty and \emph{all}
the possible selections $U_\tau\in \DMM\tau{\phi_{\tau}}{u(0)}$
will converge to $u$ in the topology of compact convergence as $\tau\downarrow0$. We note that (\ref{eq:49}) implies  
\begin{equation}
  \label{eq:49 equiv}
    \MM(\Phi,u(0))=\{u\}=\GMM(\Phi, u(0))
\end{equation}
for the generating functional $\Phi$ of \eqref{eq:35}.
In the finite dimensional case, \eqref{eq:49 equiv}
is indeed equivalent to the second part of (\ref{eq:49}), due to the {\normalcolor $\sfd_\infty$-compactness} of every sequence $(U_\tau)$ of discrete solutions.

It is clear that any $u\in \appsolution\phi$ satisfies the property
expressed by De Giorgi's conjecture, and we will prove that in the finite dimensional Euclidean setting, indeed \emph{every} solution $u\in \solution\phi$ is strongly approximable.
%

In a few situations ($\H$ has infinite dimension and $\phi$ is not
bounded from below) we will also consider approximations on bounded
intervals,
recalling the notation introduced in Remark \ref{rem:bounded-interval}.
\begin{definition}\label{def:PAGF}
Let $\phi\in \rmC^1(\H)$. We say that a solution $u\in \solution\phi$ is strongly approximable in every compact interval if 
  there exists a family of Lipschitz
  perturbations
  $\phi_\tau\in \Lip(\H)$, $\tau\in(0,\tau_o)$, such that 
  \begin{equation}
    \label{eq:PAGF}
    \lim_{\tau\downarrow0}\Lip[\phi_\tau-\phi]=0,\quad
    \lim_{\tau\downarrow0}\dsfD T(u|_{[0, T]},\DMM\tau{\phi_\tau}{u(0),T})=0 \quad \text{for every } T>0. 
  \end{equation} 
\end{definition}
The notion of \emph{strong approximability in every compact interval} slightly differs from the notion of \emph{strong
  approximability} since we do not require the existence of elements
in 
$\DMM\tau{\phi_\tau}{u(0)}$ for $\tau > 0$ small enough and we work
with $\DMM\tau{\phi_\tau}{u(0), T}$ instead. 
The next remark better clarifies 
the relation between the two notions.
\begin{remark}[Strong approximability]
  \upshape
  \label{rem:approximability}
If a solution $u\in\solution\phi$ is strongly approximable 
in every compact interval
%
and \emph{for every sufficiently small $\tau > 0$
the set of minimizing sequences
$\MS\tau{\phi_\tau}{u(0)}$ is nonempty}, then $u$ is strongly
approximable according to Definition \ref{def:AGF}: it is a simple
consequence of \eqref{eq:43} and of the fact that 
for every $U\in \DMM\tau{\phi_\tau}{u(0)}$ 
the restriction $U\restr{[0,T]}$ belongs to 
$\DMM\tau{\phi_\tau}{u(0),T}$.

Conversely, if $u$ is strongly approximable 
and for every $\tau>0$ sufficiently small and
\emph{for every $N > 0$ 
any minimizing sequence in $\MS\tau{\phi_\tau}{u(0), N}$ 
can be extended to a 
minimizing sequence in $\MS\tau{\phi_\tau}{u(0)}$}, 
then $u$ is strongly approximable in every compact interval $[0,T]$,
according to Definition \ref{def:PAGF}. 

\emph{In the finite dimensional Euclidean case
the two notions of approximability are equivalent,}
since the minimization problems (\ref{eq:36}) are always solvable
for $\tau$ sufficiently small and 
$\phi$ quadratically bounded from below.
\end{remark} 


At the end of this preliminary section, we want to show that the class of strongly approximable solutions is closed with respect
to Lipschitz convergence of the functionals and compact convergence of the solutions. We make use of an equivalent characterization of
$\appsolution\phi$ provided by the next lemma. 
\begin{lemma}
  Let $\phi\in \rmC^1(\H)$.
  $u\in \appsolution\phi$ if and only if 
  for every $\eps>0$ there exist $\bar\tau>0$ 
  and a family $\phi_{\eps,\tau}:\H\to \R$, $0<\tau\le \bar
    \tau,$ such that
  \begin{equation}
    \label{eq:47}
    \Lip[\phi_{\eps,\tau}-\phi]\le \eps,\quad
    \dsfD\infty(u,\DMM\tau{\phi_{\eps,\tau}}{u(0)}\le \eps
    \quad\text{for every }\tau\in (0,\bar\tau].
  \end{equation}
\end{lemma}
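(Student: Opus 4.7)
The plan is a standard equivalence between a limit formulation and an $\varepsilon$-controlled formulation; the argument is essentially a diagonal/exhaustion procedure.

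\textbf{Forward direction.} Assume $u\in\appsolution\phi$, so by Definition \ref{def:AGF} there is a family $\{\phi_\tau\}_{\tau\in(0,\tau_o)}$ with $\Lip[\phi_\tau-\phi]\to 0$ and $\dsfD\infty(u,\DMM\tau{\phi_\tau}{u(0)})\to 0$ as $\tau\downarrow 0$. Given $\varepsilon>0$, I simply pick $\bar\tau\in(0,\tau_o)$ small enough that both quantities are $\le\varepsilon$ on $(0,\bar\tau]$, and set $\phi_{\varepsilon,\tau}:=\phi_\tau$ for $\tau\in(0,\bar\tau]$. This immediately verifies \eqref{eq:47}.

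\textbf{Backward direction.} Assume that for each $\varepsilon>0$ a family $\{\phi_{\varepsilon,\tau}\}_{\tau\in(0,\bar\tau(\varepsilon)]}$ satisfying \eqref{eq:47} exists. Choose a sequence $\varepsilon_k:=1/k$ and corresponding thresholds $\bar\tau_k:=\bar\tau(\varepsilon_k)$, which we may assume (replacing $\bar\tau_k$ by $\min(\bar\tau_1,\dots,\bar\tau_k, 1/k)$ if necessary) to be strictly decreasing with $\bar\tau_k\downarrow 0$. I then glue the families together by defining
\[
\phi_\tau := \phi_{\varepsilon_k,\tau}\qquad\text{whenever }\tau\in(\bar\tau_{k+1},\bar\tau_k],
\]
and $\tau_o:=\bar\tau_1$. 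For such $\tau$ the defining properties of $\phi_{\varepsilon_k,\tau}$ give
\[
\Lip[\phi_\tau-\phi]\le \varepsilon_k=\tfrac 1k,\qquad \dsfD\infty\bigl(u,\DMM\tau{\phi_\tau}{u(0)}\bigr)\le \varepsilon_k=\tfrac 1k.
\]
Since $\tau\downarrow 0$ forces $k\to\infty$, both bounds tend to $0$, which is exactly \eqref{eq:49}. Hence $u\in\appsolution\phi$.

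\textbf{Expected obstacle.} There is no real obstacle: the only point to take care of is ensuring that the patched family $\{\phi_\tau\}$ is defined on a \emph{single} right neighborhood of $0$ and that both $\Lip[\phi_\tau-\phi]$ and $\dsfD\infty(u,\DMM\tau{\phi_\tau}{u(0)})$ decay as $\tau\downarrow 0$ (not merely stay below a fixed $\varepsilon$). The monotone choice of the $\bar\tau_k$ together with the decreasing sequence $\varepsilon_k\downarrow 0$ handles this, and no compatibility between the different families $\phi_{\varepsilon,\cdot}$ is required because on each half-open interval $(\bar\tau_{k+1},\bar\tau_k]$ only one of them is used.
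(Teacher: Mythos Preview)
Your argument is correct and follows essentially the same diagonal/gluing construction as the paper. One small point you omit: Definition~\ref{def:AGF} requires $u\in\solution\phi$, so after establishing \eqref{eq:49} you should note (as the paper does) that $u\in\GMM(\Phi,u(0))$ and hence $u\in\solution\phi$ by Lemma~\ref{le:well-known}(ii).
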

\begin{proof}
  Since it is obvious that any $u\in \appsolution\phi$ satisfies the
  condition stated in the lemma, we only consider the inverse implication.
  
  Let us fix a decreasing sequence $\eps_n\downarrow0$; we can find a corresponding sequence $\bar
  \tau_n$
  and functions $\phi_{\eps_n,\tau}$ satisfying \eqref{eq:47} for
  $0<\tau\le \bar\tau_n$. By possibly replacing $\bar\tau_n$ with
  $\tilde\tau_n:=2^{-n}\land \min_{1\le m\le n}\bar \tau_m$, it is not restrictive to assume that $\bar
  \tau_n$ is also decreasing and converging to $0$. We can thus define
  \begin{equation}
    \label{eq:50}
    \phi_\tau:=\phi_{\eps_n,\tau}\quad\text{whenever }\tau\in
    (\bar\tau_{n+1},\bar \tau_n],
  \end{equation}
  and it is easy to check that this choice satisfies \eqref{eq:49}. \normalcolor The fact that $u\in\solution\phi$ follows by \eqref{eq:49 equiv} and Lemma \ref{le:well-known}(ii); hence, $u$ is a strongly approximable solution according to Definition \ref{def:AGF}. 
\end{proof}
\begin{lemma}
  \label{le:appsol}
  The class of strongly approximable solutions satisfies the following closure
  property:
  if $\phi,\phi_k\in \rmC^1(\H)$ 
  and $u_k\in
  \appsolution{\phi_k}$, $k\in \N$, with the same initial datum $\bar u=u_k(0)$,
  satisfy 
  \begin{equation}
    \label{eq:41}
    \lim_{k\to\infty} \Lip[\phi_k-\phi]=0,\quad
    \lim_{k\to\infty}\dsfd\infty(u_k,u)=0 
  \end{equation}
  then $u\in \appsolution\phi$.
\end{lemma}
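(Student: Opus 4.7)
The plan is to verify the criterion from the preceding lemma: for every $\eps>0$ I will exhibit $\bar\tau>0$ and a family $\phi_{\eps,\tau}:\H\to\R$ ($0<\tau\le\bar\tau$) such that $\Lip[\phi_{\eps,\tau}-\phi]\le\eps$ and $\dsfD\infty(u,\DMM\tau{\phi_{\eps,\tau}}{\bar u})\le\eps$ for every $\tau\in(0,\bar\tau]$. Once this is done, the previous lemma immediately yields $u\in\appsolution\phi$.

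Fix $\eps>0$. Using \eqref{eq:41}, choose $k=k(\eps)\in\N$ large enough so that
\begin{equation*}
  \Lip[\phi_k-\phi]\le \eps/2\quad\text{and}\quad \dsfd\infty(u_k,u)\le \eps/2.
\end{equation*}
Since $u_k\in\appsolution{\phi_k}$ and $u_k(0)=\bar u$, Definition \ref{def:AGF} produces a family $\psi_{k,\tau}:\H\to\R$ (for $\tau$ in some right neighbourhood of $0$) with $\Lip[\psi_{k,\tau}-\phi_k]\to 0$ and $\dsfD\infty(u_k,\DMM\tau{\psi_{k,\tau}}{\bar u})\to 0$ as $\tau\downarrow 0$. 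Therefore I can pick $\bar\tau=\bar\tau(\eps)>0$ so small that both $\Lip[\psi_{k,\tau}-\phi_k]\le\eps/2$ and $\dsfD\infty(u_k,\DMM\tau{\psi_{k,\tau}}{\bar u})\le\eps/2$ hold for every $\tau\in(0,\bar\tau]$. I then set $\phi_{\eps,\tau}:=\psi_{k(\eps),\tau}$.

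The Lipschitz bound follows from the triangle inequality for $\Lip[\cdot]$:
\begin{equation*}
  \Lip[\phi_{\eps,\tau}-\phi]\le \Lip[\psi_{k,\tau}-\phi_k]+\Lip[\phi_k-\phi]\le \eps/2+\eps/2=\eps.
\end{equation*}
For the set-distance bound I use that $\dsfd\infty$ is a genuine distance on $\H^{[0,\infty)}$, so for every $U\in\DMM\tau{\phi_{\eps,\tau}}{\bar u}$,
\begin{equation*}
  \dsfd\infty(u,U)\le \dsfd\infty(u,u_k)+\dsfd\infty(u_k,U)\le \eps/2+\dsfD\infty(u_k,\DMM\tau{\phi_{\eps,\tau}}{\bar u}).
\end{equation*}
Taking the supremum over $U\in\DMM\tau{\phi_{\eps,\tau}}{\bar u}$ (which is non-empty for $\tau\in(0,\bar\tau]$ by the choice of $\psi_{k,\tau}$) gives $\dsfD\infty(u,\DMM\tau{\phi_{\eps,\tau}}{\bar u})\le\eps$. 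This verifies the criterion of the previous lemma.

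The argument is essentially a diagonal/triangle-inequality construction, so no genuine obstacle is expected; the only point requiring mild care is to make sure $k$ is fixed \emph{before} shrinking $\tau$, so that the admissible range $(0,\bar\tau(\eps)]$ of time steps is determined by the single approximation family $\psi_{k(\eps),\tau}$ rather than by a $\tau$-dependent choice of $k$. Also, since the lemma's criterion supplies the $\dsfd\infty$-convergence of discrete solutions to $u$, membership $u\in\solution\phi$ follows from Lemma~\ref{le:well-known}(ii), so nothing additional needs to be checked about $u$.
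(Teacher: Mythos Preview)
Your proof is correct and follows essentially the same route as the paper: fix $\eps>0$, choose $k$ so that $\Lip[\phi_k-\phi]$ and $\dsfd\infty(u,u_k)$ are at most $\eps/2$, then use the strong approximability of $u_k$ to find $\bar\tau$ and perturbations with the corresponding $\eps/2$ bounds, and conclude by the triangle inequality and the criterion of the preceding lemma. The paper's argument is the same diagonal/triangle-inequality construction, with the same split into halves.
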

\begin{proof}
  Let us fix $\eps>0$; according to \eqref{eq:41}, we can find $k\in
  \N$ such that 
  \begin{equation}
    \label{eq:51}
    \Lip[\phi_k-\phi]\le \eps/2,\quad
    \dsfd\infty(u,u_k)\le \eps/2.
  \end{equation}
  Since $u_k\in \appsolution{\phi_k}$ we can also find 
  $\bar \tau>0$ and a family of functions
  $\phi_{k,\eps,\tau}:\H\to\R$, $\tau\in (0,\bar \tau),$ such that 
  \begin{equation}
    \label{eq:52}
    \Lip[\phi_{k,\eps,\tau}-\phi_k]\le \eps/2,\quad
    \dsfD\infty(u_k,\DMM\tau{\phi_{k,\eps,\tau}}{u(0)})\le \eps/2\quad
    \text{for every }\tau\in (0,\bar \tau).
  \end{equation}
 \normalcolor The family $\phi_{k,\eps,\tau}$ obeys \eqref{eq:47}, since
  the triangle inequality yields
  \begin{gather*}
    \Lip[\phi_{k,\eps,\tau} - \phi]\le 
    \Lip[\phi_{k,\eps,\tau}-\phi_k]+\Lip[\phi_k-\phi]\le \eps,\\
    \dsfD\infty(u,\DMM\tau{\phi_{k,\eps,\tau}}{u(0)})\le 
    \dsfd\infty(u,u_k)+  \dsfD\infty(u_k,\DMM\tau{\phi_{k,\eps,\tau}}{u(0)})\le \eps.
  \end{gather*} 
\end{proof}

\section{The \minimal\ gradient flow} \label{sec: max gf}
In this section, 
we define and study a particular class of solutions to (\ref{eq:1})
for a function $\phi\in \rmC^1(\H)$,
which we call \emph{\minimal\ gradient flows}.  
Let us first introduce a partial order in $\solution\phi $.
\begin{definition}\label{def: max gf}
  If $u,v\in \solution\phi$ we say that 
  $u\succ v$ if $\range v\subset  \overline{\range u}
 $ 
 and there exists an increasing $1$-Lipschitz map $\sfz
 :[0,+\infty)\to [0,+\infty)$ with $\sfz(0)=0$ such that
 \begin{equation}
   \label{eq:6}
  0\le \sfz(t)-\sfz(s)\le t-s \quad \text{for every }0\le s\le t,
  \qquad
  u(t)=v(\sfz(t))\quad\text{for every }0\le t.
\end{equation}
An element $u\in \solution\phi $ is \minimal\ if for every $v\in \solution\phi $,  $u\succ v$ yields $u=v$. We will denote by $\minsolution\phi $ the
collection of all the \minimal\ solutions.
\end{definition} 

As it appears from \eqref{eq:6},
by `increasing' we mean that $\sfz(s)\le\sfz(t)$ for all $s\le t$; if we want to require a strict inequality, we will use the term `strictly increasing'. The same goes for `decreasing' and `strictly decreasing'.
\begin{remark}[Range inclusion]
  \label{rem:range}
\upshape
Notice that if $u\succ v$ then $\range u\subset \range v\subset
\overline{\range u}$; the inclusion $\range u\subset\range v$ is
guaranteed by \eqref{eq:6}. 

The condition $\range v\subset \overline{\range u}$ prevents some
arbitrariness in the extension of a candidate minimal solution. 
In order to understand its role, consider the classical $1$-dimensional example given by 
{\normalcolor $\phi'(x)=2\sqrt{|x|}$}. For a given $T_*>0$ the curve 
$  u(t):=\big((T_*-t)\lor 0\big)^2$ belongs to $\solution\phi$ and it is minimal 
according to the previous definition (it is an easy consequence
of the next Theorem \ref{thm: max gf}(5)).
However, the curve
$v(t):=u(t)-\big((t-2T_*)\lor 0\big)^2$  still belongs to
$\solution\phi$ and satisfies \eqref{eq:6} by choosing $\sfz(t)=t\land
T_*$.
$u\not\succ v$ since $\range v\not\subset \overline{\range u}=
[0,T^2_*]$.
\end{remark}

We note that constant solutions are minimal by definition. 

\begin{remark}[$\succ$ is a partial order in $\solution\phi$]
  \upshape
  It is easy to check that the relation $\succ$ is reflexive and
  transitive; let us show that it is also antisymmetric.
  If $u,v\in \solution\phi$ 
  satisfy $u\succ v$ and $v\succ u$, we can find increasing
  and $1$-Lipschitz maps 
  $\sfz_1,\sfz_2:[0,\infty)\to[0,\infty)$ such that 
  $u(t)=v(\sfz_1(t))$ and $v(t)=u(\sfz_2(t))$ for every $t\in
[0,\infty)$; in particular $u=u\circ \sfz$ where
$\sfz=\sfz_2\circ\sfz_1$ is also an increasing and $1$-Lipschitz
map satisfying $\sfz(t)\le t$.
Notice that the inequalities $\sfz_i(t)\le t$ and the monotonicity of
$\sfz_i$ yield
\begin{displaymath}
  \sfz(t)\le \sfz_i(t)\le t\quad
  \text{for every }t\ge0,\quad i=1,2.
\end{displaymath}
Let us fix $t\in [0,\infty)$: if $\sfz(t)=t$ then $\sfz_i(t)=t$
so that $u(t)=v(\sfz_1(t))=v(t)$.
If $\sfz(t)<t$ then 
$u$ is constant in the interval $[\sfz(t),t]$
so that $u(t)=u(\sfz_2(t))=v(t)$ as well.
\end{remark}
%

The next result collects a list of useful properties concerning
\minimal\ solutions. Recall that $T_\star(u)$ has been 
    defined by \eqref{eq:92}.
We introduce  the class of truncated solutions
$\truncatedsolution\phi\supset \solution\phi$ whose elements are 
solutions in $\solution\phi$ or curves 
$v:[0,\infty)\to\H$
of the form
$v(t):=\tilde v(t\land T)$ for some 
{\normalcolor $\tilde v\in \solution\phi$ and $T\in [0,\infty)$. The set $\solution\phi$ 
is closed in
$\mathrm C^0([0,\infty);\H)$.} 

\begin{remark}\label{rem:TGF}\upshape
If $v: [0, S] \to \H$ solves \eqref{eq:1} in $[0, S]$ for some $S > 0$ and there exists $u\in\solution\phi$ and $T>0$ so that $u(T) = v(S)$, then $v$ can be identified with an element in $\truncatedsolution\phi$ since $\tilde{v}\in\solution\phi$ where $\tilde{v}(t) := 
v(t) \text{ if } t\in[0, S]$ and  
$\tilde{v}(t) := u(t-S+T) \text{ if } t\in (S, +\infty)$. 
  
\end{remark}

\begin{theorem}\label{thm: max gf} {\normalcolor Let $\phi\in\rmC^1(\H)$ satisfy (\ref{eq:coercive intro}).} \ 
  \begin{enumerate}[\rm (1)] 
  \item For every $R=\range y\subset \H$ which is the range 
    of $y\in \solution\phi$ there exists a unique $u\in
    \minsolution\phi$ such that
    $R\subset\range u\subset \overline R$.
    If $v\in \solution\phi $ and 
    $R \subset \range v \subset \overline R$, then $v\succ u$.
    In particular, for every $v\in \solution \phi$ there exists a
    unique
    $u\in \minsolution\phi$ such that $v\succ u$.
  \item $u\in \minsolution\phi $ if and only if 
    for every 
    $v\in \truncatedsolution \phi$ with $v(0)=u(0)$ and
    $\range v\subset\overline{\range u}$ the following holds: if
    $u(t_0)=v(t_1)$ for some $t_0,t_1\ge0$ 
    then
    $t_0\land T_\star(u)\le t_1$.    
  \item $u\in \minsolution\phi $ if and only if
    for every 
    $v\in \truncatedsolution \phi$
    with $v(0) = u(0)$ and
    $\range v\subset \overline{\range u}$ we have
    $\phi(v(t))\ge \phi(u(t))$ for every $t\ge 0$.
  \item If $u\in \minsolution\phi$, 
    $v\in \truncatedsolution \phi$ with $v(0) = u(0)$,
    $\range v\subset \range u$ 
    and $\phi(v(t))\le \phi(u(t))$ for
    every $t\in [0,T_\star(v))$, 
    then $v(t)=u(t)$ for every $t\in [0,T_\star(v))$.
  \item 
    $u$ belongs to $\minsolution\phi $ 
    if and only if
    the restriction of 
    $u$ to $[0,T_\star(u))$
    crosses the set $\critical \phi$ of critical points of $\phi$ 
    in an $\mathscr{L}^1$-negligible set of times, i.e.
    \begin{equation}
      \label{eq:7}
      \mathscr L^1\Big(\{t\in[0, T_\star(u)): \ \nabla\phi(u(t)) = 0\}\Big)=0.
    \end{equation}
    If $u\in\minsolution\phi$ and $T_\star(u) > 0$, the map $t\mapsto (\phi\circ u)(t)$ is strictly
    decreasing in $[0, T_\star(u))$
    and the map $t\mapsto u(t)$ is injective in $[0,T_\star(u))$.
  \item A non-constant solution $u\in \solution\phi$ is minimal if and only if 
  there exists a
    locally absolutely continuous map 
    $$
    \text{$\psi:
    \big(\inf_{\range u}\phi,\phi(u(0))\big] \to \big[0,T_\star(u)\big)$\quad
    such that \quad
    $t=\psi(\phi(u(t))$ for every $t\in \big[0,T_\star(u)\big).$}$$
  \end{enumerate}
\end{theorem}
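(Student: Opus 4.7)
My plan is to organize the six equivalences around a single principle: a solution $u\in\solution\phi$ is minimal precisely when it never dwells at a critical point, equivalently when $\phi\circ u$ is strictly decreasing on $[0,T_\star(u))$. I would prove the central characterization (5) first and derive the other parts from it.

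For the direction (5) $\Rightarrow$ minimality, suppose the dwelling set $E:=\{t\in[0,T_\star(u)):\nabla\phi(u(t))=0\}$ is $\mathscr L^1$-negligible and $u\succ v$ via a $1$-Lipschitz nondecreasing $\sfz$ with $u(t)=v(\sfz(t))$. Differentiating at a.e.\ $t$ and applying the gradient flow equation on both sides gives $-\nabla\phi(u(t))=v'(\sfz(t))\sfz'(t)=-\nabla\phi(u(t))\sfz'(t)$, so $\sfz'(t)=1$ whenever $\nabla\phi(u(t))\neq 0$; by hypothesis this forces $\sfz'=1$ a.e., whence $\sfz(t)=t$ and $u=v$. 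For the converse I argue contrapositively: if $\mathscr L^1(E)>0$, set $\sfz(t):=\mathscr L^1([0,t]\setminus E)$, which is $1$-Lipschitz and nondecreasing. Since $\nabla\phi(u)\equiv 0$ on $E$, the integral identity $u(t_2)-u(t_1)=-\int_{t_1}^{t_2}\nabla\phi(u(r))\,\rmd r$ shows that $u$ is constant on every fiber $\sfz^{-1}(\{\sigma\})$; passing to the quotient yields a continuous $v$ with $u=v\circ\sfz$, and a short change-of-variables argument using $\sfz'=1$ outside $E$ gives $v\in\solution\phi$. Then $u\succ v$ with $u\neq v$ contradicts minimality.

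The remaining parts follow quickly from this machinery. Applying the same quotient construction to an arbitrary $y\in\solution\phi$ yields the minimal element with range essentially $\range y$ claimed in (1); uniqueness is a direct consequence of the antisymmetry of $\succ$ applied to two candidates. For (6), on $[0,T_\star(u))$ the map $\phi\circ u$ is continuous, nonincreasing, and has a.e.\ derivative $-|\nabla\phi(u)|^2\neq 0$ by \eqref{eq:7}, hence strictly decreasing; the continuous inverse $\psi$ is then well defined and its local absolute continuity follows from the integral identity \eqref{eq:4}. For (2) and (3), given $v\in\truncatedsolution\phi$ with $v(0)=u(0)$ and $\range v\subset\overline{\range u}$, the quotient of $v$ by its own dwelling set produces a minimal truncated solution $w$ with $v\succ w$; by the uniqueness in (1), $w=u$ on $[0,T_\star(u))$, so $v\succ u$. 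The bound $\sfz(t)\le t$ built into the definition of $\succ$ is exactly (2), and composing it with the monotonicity of $\phi\circ v$ gives (3). Part (4) is the converse: (3) forces $\phi(v(t))=\phi(u(t))$, and the injectivity of $\phi|_{\range u}$ from Lemma~\ref{le:preliminary}(iii) yields $v(t)=u(t)$ on $[0,T_\star(v))$.

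The main obstacle I anticipate is the quotient construction used both in (1) and in the contrapositive of (5) $\Rightarrow$ minimality. When $E$ has a Cantor-like structure -- as in the example of the appendix -- one must verify that $u$ is constant on the whole fiber $\sfz^{-1}(\{\sigma\})$, not merely on its interior, and that the induced curve $v$ is of class $\rmC^1$ satisfying the gradient flow equation on the whole image of $\sfz$. The integral identity for $u$ resolves the first issue, since $\int_{t_1}^{t_2}\nabla\phi(u(r))\,\rmd r=0$ over any fiber regardless of measure-theoretic pathology; the second requires changing variables on the complement of $E$ where $\sfz'=1$ and then extending by the continuity of $\nabla\phi$ to the whole image. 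A final subtlety is that the quotient may become constant after a finite time, in which case the resulting $v$ must be interpreted as an element of $\truncatedsolution\phi$ via Remark~\ref{rem:TGF}.
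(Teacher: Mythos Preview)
Your approach is correct and genuinely different from the paper's, and in one key respect more elementary.

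The paper establishes (1) first, by a variational argument: for an exhausting sequence of compact sub-arcs $R_n\subset R$ it minimizes $T_\star$ over $\truncatedsolution\phi$ with range $R_n$ (using Arzel\`a--Ascoli for compactness), shows by a cut-and-paste contradiction that the minimizer is $1$-Lipschitz-dominated by every competitor, and then glues the minimizers along $n$. Only afterwards does it attack (5), and there it passes through the arc-length reparametrization $\sfx(t)=\int_0^t|u'|$, its inverse $\sfy$, and the BV chain rule for $u_h\circ\sfz^{-1}$ to show that the reparametrized curve is $\rmC^1$; this is the most technical part of their argument. Your route inverts the order: you build the minimal reparametrization directly as the quotient by the dwelling set $E$, via $\sfz(t)=\mathscr L^1([0,t]\setminus E)$, and you recognize that the hard step---showing $v=u\circ\sfz^{-1}$ solves \eqref{eq:1}---reduces to the identity
\[
v(s_2)-v(s_1)=-\int_{\sfz^{-1}(s_1)}^{\sfz^{-1}(s_2)}\nabla\phi(u(r))\,\rmd r
=-\int_{[\sfz^{-1}(s_1),\sfz^{-1}(s_2)]\setminus E}\nabla\phi(u(r))\,\rmd r
=-\int_{s_1}^{s_2}\nabla\phi(v(\sigma))\,\rmd\sigma,
\]
where the middle equality uses $\nabla\phi(u)\equiv 0$ on $E$ and the last is the change of variables for the absolutely continuous monotone map $\sfz$ with $\sfz'=\mathbf 1_{[0,\infty)\setminus E}$ a.e. This bypasses the BV chain rule entirely and works component-wise in infinite dimension as well. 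What the paper's route buys is that (1)---including the ``$v\succ u$ for every $v$ with the given range'' clause---comes out of the minimization directly, without needing (5) or (6).

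One organizational point to tighten: your derivation of (2)--(3) invokes ``uniqueness in (1)'' to identify the quotient $w$ of $v$ with $u$, but (1) as stated compares minimal solutions with the \emph{same} range, whereas here $\range w$ may be a proper sub-arc of $\range u$. The clean fix is to use (6) rather than (1): once both $u$ and $w$ satisfy \eqref{eq:7}, the inverse $\psi$ of $\phi\circ u$ is locally absolutely continuous with $\psi'(\varphi)=-|\nabla\phi((\phi|_{\range u})^{-1}(\varphi))|^{-2}$, a formula depending only on the range, so $\psi_u=\psi_w$ on their common domain and hence $w=u$ on $[0,T_\star(w))$. The same formula gives uniqueness in (1) more transparently than ``antisymmetry'' alone.
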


\begin{proof}
(1). 
\newcommand{\ww}{v}
\newcommand{\vv}{w}
Let us fix $\ww,y\in \solution\phi$, $R=\range y$ 
with $R \subset \range \ww \subset \overline R$; it is not restrictive to assume that $T_\star:=T_\star(\ww)>0$ (otherwise $\ww$
is constant and $R$ is reduced to one stationary point).
We set $\varphi_\star:=\inf_R\phi$ and we select a 
sequence $r_n\in R\setminus \critical \phi$ so that
$\varphi_n=\phi(r_n)$ is decreasing and converging to $\varphi_\star$.
We can find a corresponding increasing sequence of points
$T_n\to T_\star$ 
such that $\ww(T_n) = r_n$ and we set $R_n:=\big\{r\in R:\phi(r)\ge \varphi_n\}=\ww([0,T_n])$. We consider the class 
\begin{displaymath}
  \mathcal G[R_n]:=\Big\{\vv\in \truncatedsolution\phi: \range \vv= R_n\Big\}.
\end{displaymath}
$\mathcal G[R_n]$ is not empty since it
contains the function $t\mapsto \ww(t\land T_n)$, and the sublevel sets 
$\normalcolor \{\vv\in \mathcal G[R_n]:\mathcal T_\star(\vv)\le c\}$, $c > 0$, 
are compact in $\rmC^0([0,\infty);\H)$
by the Arzel\`a-Ascoli Theorem.
It follows that $\normalcolor \mathcal T_\star$ admits a minimizer in $\mathcal G[R_n]$ that we will
denote by $u_n$, with $S_n:=T_\star(u_n)\le T_n$. We now define
\begin{equation}
  \label{eq:10}
  \sfz_n(t):=\min\Big\{s\in [0,S_n]:u_n(s)=\ww(t)\Big\},\quad
  t\in [0,T_n],
\end{equation}
and we claim that $\sfz_n$ is increasing,
surjective
and $1$-Lipschitz from $[0,T_n]$ to $[0,S_n]$.
Since $\phi$ is an homeomorphism between $\ww([0,T_n])$ and the interval
$[\phi(\ww(T_n)),\phi(\ww(0))]$, $\sfz_n$ can be equivalently defined as 
$\min\Big\{s\in [0,S_n]:\phi(u_n(s))=\phi(\ww(t))\Big\}$, 
which shows that $\sfz_n$ is increasing. In order to prove that
$\sfz_n$ is $1$-Lipschitz, we argue by contradiction and suppose that there exist times
$0\le t_1<t_2\le T_n$ with $\delta_\sfz=\sfz_n(t_2)-\sfz_n(t_1)>
t_2-t_1=\delta_t$. Since by construction
$\ww(t)=u_n(\sfz_n(t))$, we can consider a new curve 
\begin{displaymath}
  \vv(r):=
  \begin{cases}
    u_n(r)&\text{if }0\le r\le \sfz_n(t_1),\\
    \ww(r+t_1-\sfz_n(t_1))&\text{if }\sfz_n(t_1)\le r\le
    \delta_t+\sfz_n(t_1)\\
    u_n(r+\delta_\sfz-\delta_t)&\text{if }r\ge \delta_t+\sfz_n(t_1).
  \end{cases}
\end{displaymath}
Defining $W_n:= S_n-\delta_\sfz+\delta_t<S_n$ it is easy to check
that $\vv(r)\equiv u_n(S_n)=\ww(T_n)$ for every $r\ge W_n$ and $\vv$ is a
solution to \eqref{eq:1} in the interval $[0,W_n)$, so that $\vv\in
\mathcal G[R_n]$ and $T_\star(\vv)=W_n<S_n$ which contradicts the
minimality of $u_n$.

The same argument shows that $u_n$ is in fact the unique minimizer of
$\normalcolor \mathcal T_\star$ in $\mathcal G[R_n]$: another minimizer $\tilde u_n$ will
also belong to $\mathcal G[R_n]$ with $T_\star (\tilde u_n)=S_n$,
so that there exists an increasing
$1$-Lipschitz map
$\mathsf r:[0,S_n]\to [0,S_n] $ such that $u_n(s)=\tilde u_n(\mathsf
r(s))$ for every $s\in [0,S_n]$. Since $\mathsf r (S_n)=S_n$ $\mathsf
r$ should be the identity so that $\tilde u_n$ coincides with $u_n$.

Let us now show that 
\begin{equation}
  \label{eq:9}
  S_n<S_{n+1},\quad
  u_n(s)=u_{n+1}(s),\quad \sfz_n(t)=\sfz_{n+1}(t)\quad 
  \text{for every }s\in [0,S_n],\ 
  t\in [0,T_n].
\end{equation}
In fact, for every $\bar t\in [0,T_n]$ with $\sfz_n(\bar t)=\bar s\in [0,S_n]$ 
there exists $\sfz_{n+1}(\bar t)=s'\in (0,S_{n+1})$ such that
$u_{n+1}(s')=\ww(\bar t)=u_n(\bar s)$; if $s' < \bar s$ we would conclude that the map
\begin{displaymath}
  \hat{\vv}(s):=
  \begin{cases}
    u_{n+1}(s)&\text{if }s\in [0,s'],\\
    u_n(s-s'+\bar s)&\text{if }s\ge s'
  \end{cases}
\end{displaymath}
belongs to $\mathcal G[R_n]$ with $T_\star(\hat{\vv})=S_n + s'-\bar s < S_n$ 
contradicting the minimality of $u_n$. Choosing $\bar s=S_n$
this in particular shows that $S_{n+1}>S_n$.
If $s'>\bar s$ we could define
\begin{displaymath}
  \tilde \vv(s):=
  \begin{cases}
    u_n(s)&\text{if }0\le s\le \bar s,\\
    u_{n+1}(s-\bar s+s')&\text{if }s\ge \bar s
  \end{cases}
\end{displaymath}
obtaining a function $\tilde \vv\in \mathcal G[R_{n+1}]$ with
$T_\star(\tilde \vv)=S_{n+1}-(s'-\bar s)<S_{n+1}$, contradicting the
minimality of $u_{n+1}$. We thus get $s' = \bar s$, and therefore
$u_{n+1}(s)=u_n(s)$ in $[0,S_n]$ and
$\sfz_n(t)=\sfz_{n+1}(t)$ in $[0,T_n]$.

Let us now set $S_\star:=\sup S_n$. Due to
\eqref{eq:9} we can define the maps 
\begin{equation}
  \label{eq:11}
  u(s):=
  \begin{cases}
    u_n(s)&\text{if }s\in [0,S_n]\text{ for some $n\in \N$,}\\
    u_\star&\text{if }s\in [S_\star,\infty)
  \end{cases}
  \quad
  \sfz(t):=
  \begin{cases}
    \sfz_n(t)&\text{if }t\in [0,T_n]\text{ for some $n\in \N$,}\\
    S_\star&\text{if }t\in [T_\star,\infty)
  \end{cases}
\end{equation}
with $u_\star := \lim_{s\uparrow S_\star}u(s)$ if $S_\star < \infty$. The curve $u$ solves \eqref{eq:1} in $[0,S_\star)$; 
{\normalcolor due to \eqref{eq:5}, the limit $u_\star$ is well-defined for $S_\star < \infty$}. If $T_\star<\infty$ then $ u_\star=\ww(T_\star)$; if
$T_\star=+\infty$ then $u_\star=\lim_{t\uparrow \infty}\ww(t)$ is a critical point of $\phi$ {\normalcolor as (\ref{eq:4}) yields $\int_0^\infty{|\nabla(\phi(v(t)))|^2 \ \mathrm d t} < +\infty$ in this case.} 
So we constructed an element $u\in \solution\phi$ with $\ww\succ u$ and
$R\subset \range u\subset \overline R$.

Notice that by construction $u$ just depends on $R$. 
Suppose now that there exists $\bar u\in \solution\phi$ with $u\succ \bar u$: in particular
$R\subset \range {\bar{u}}\subset \bar R$ by Remark \ref{rem:range}, 
so that the above argument shows that $\bar u\succ u$ and
therefore $\bar u\equiv u$. This property shows that $u\in
\minsolution\phi$.

\medskip\noindent
(2). Let us first observe that if $u\in \minsolution\phi$ is non-constant, then
the map $t\mapsto u(t)$ is injective in $[0, T_\star(u))$. In fact, if $u(t_0)=u(t_0+\delta)$ for
some $0\le t_0<t_0+\delta<T_\star(u)$, then $\phi\circ u$ and thus $u$ is constant in
$[t_0,t_0+\delta]$ so that the curve 
\begin{displaymath}
  w(t):=
  \begin{cases}
    u(t)&\text{if }t\in [0,t_0],\\
    u(t+\delta)&\text{if }t\ge t_0
  \end{cases}
\end{displaymath}
belongs to $\solution\phi$, satisfies $\range w=\range u$ and
$u(t)=w(\sfz(t))$ where $\sfz(t)=t\land t_0+(t-(t_0+\delta))_+$.
This yields $u\succ w$ so that $u\equiv w$ by
 the \minimality\ of $u$; we deduce $u(t)=u(t+\delta)$ 
for every $t\ge t_0$, which implies that $T_\star(u)\le t_0$,
a contradiction.

Let us now suppose that $u\in \minsolution\phi$,
$v\in \truncatedsolution \phi$ with $v(0) = u(0)$ and $\range v\subset
\overline{\range u}$. It is not restrictive to assume $T_\star(u)>0$.
We fix $t_0\ge0$ and $t_1\in [0,T_\star(v)]$ 
such that 
$u(t_0)=v(t_1)$,
and we define the curve
\begin{displaymath}
  w(t):=
  \begin{cases}
    v(t)&\text{if }0\le t\le t_1,\\
    u(t-t_1+t_0)&\text{if }t\ge t_1.
  \end{cases}
\end{displaymath}
We clearly have $w\in \solution\phi$;
moreover Lemma \ref{le:preliminary}(iii) yields 
$u([0,t_0])=v([0,t_1])$ so that $\range w=\range u$.
By the previous point (i) we deduce that 
$w\succ u$ and there exists an increasing $1$-Lipschitz map
$\sfz:[0,\infty)\to[0,\infty)$ 
such that $w(t)=u(\sfz(t))$. In particular,
$w(t_1)=v(t_1)=u(\sfz(t_1))=u(t_0)$ and $\sfz(t_1)\le t_1$. 
On the other hand, since $t\mapsto u(t)$ is injective
in $[0,T_\star(u))$ we deduce that 
$t_0=\sfz(t_1)$ or $\sfz(t_1)\ge T_\star(u)$.
If $t_1> T_\star(v)$ we simply replace $t_1$ by $T_\star(v)$,
since $v(t_1)=v(T_\star(v))$.

The converse implication is a simple consequence of the previous
claim: if $u\in \solution \phi$ we can construct the unique \minimal\ 
flow $v\in \minsolution \phi$ with $\range u\subset \range v\subset \overline{\range u}$, so that 
$u(t)=v(\sfz(t))$ for a suitable $1$-Lipschitz map satisfying
$\sfz(0)=0$. By assumption, $t\land T_\star(u)\le \sfz(t)$ but the $1$-Lipschitz
property yields $t\ge \sfz(t)$ so that $\sfz$ is the identity 
on $[0,T_\star(u))$. If $T_\star(u)=+\infty$ we deduce
immediately that $u\equiv v$; if $T_\star(u)<\infty$
we deduce that $v(t)=u(t)$ for every $t\in [0,T_\star(u)]$ 
and then $v\equiv u$ since $\range v\subset 
\range u=u([0,T_\star(u)])$.
In particular $u\in \minsolution\phi$.

\medskip\noindent
(3) Let $u\in \minsolution\phi$, 
$v\in \truncatedsolution \phi$
with $v(0) = u(0)$ and 
$\range v\subset \overline{\range u}$; it is not restrictive 
to assume $T_\star(u)>0$.
For every $t\in[ 0,T_\star(v))$ 
there exists $s\in [0,T_\star(u))$ such
that $v(t)=u(s)$. Claim (2) yields $s\le t$ so that 
$\phi(v(t))=\phi(u(s))\ge \phi(u(t))$. If $T_\star(v)<\infty$ we get
by continuity $\phi(v(T_\star(v)))\ge \phi(u(T_\star(v)))$ and
therefore
$\phi(v(t))=\phi(v(T_\star(v)))\ge \phi(u(T_\star(v)))\ge \phi(u(t))$
for every $t\ge T_\star(v)$.

In order to prove the converse implication, we argue as in the
previous claim and we construct the \minimal\ solution $v\in
\minsolution\phi$
with $\range u\subset \range v\subset \overline{\range u}$, so that 
$u(t)=v(\sfz(t))$ for a suitable $1$-Lipschitz map satisfying
$\sfz(0)=0$. Since $\sfz(t)\le t$ we get
$\phi(u(t))=\phi(v(\sfz(t)))\ge \phi(v(t))$, so that we deduce 
$\phi(u(t))=\phi(v(t))$ for every $t\ge0$; since $\phi$ is injective
on $\range v\supset \range u$ we obtain $u(t)=v(t)$.

\medskip\noindent
(4) {\normalcolor is an immediate consequence of the previous point (3)
and 
Lemma \ref{le:preliminary}(iii).}

\medskip\noindent
(5)
We first prove that a solution $u\in \solution\phi$ satisfying
\eqref{eq:7} is \minimal. In fact, if $u\succ v$ 
we can find a $1$-Lipschitz increasing map $\sfz$ such that 
$u(t)=v(\sfz(t))$. Since 
the map $\sfz$ is differentiable a.e.~in $[0, \infty)$ and 
$u,v$ are solutions to \eqref{eq:1} we obtain for a.e. $t\in [0,T_\star(u))$ 
\begin{equation*}
  -|\nabla\phi(u(t))|^2 \ = \ (\phi\circ u)'(t) \ = 
  \ (\phi\circ v\circ\sfz)'(t) \ = \ -|\nabla\phi(v(\sfz(t)))|^2
  \sfz'(t)\ = \
  -|\nabla\phi(u(t))|^2   \sfz'(t).
\end{equation*}
By \eqref{eq:7} we deduce $\sfz'(t) = 1$ {\normalcolor for a.e.~$t\in[0, T_\star(u))$, so that
$\sfz(t)=t$ in $[0, T_\star(u))$} and $v\equiv u$. 

Let us now prove that every $u\in \minsolution\phi$ satisfies
\eqref{eq:7}. Let $T_\star:= T_\star(u) > 0$.
Starting from $u$ we construct a solution $w\in \solution\phi$ 
with the same range as $u$ and which crosses $\critical\phi$ 
in an $\mathcal{L}^1$-negligible set of times. For this purpose, we
introduce the map
\begin{equation*}
\mathsf x\in \mathrm C^1([0, +\infty)), \quad \mathsf x(t) :=
\int_0^t{|u'(s)| \ \mathrm d s}\quad\text{with}\quad
X:=\int_0^\infty{|u'(s)| \ \mathrm d s}=\lim_{t\uparrow+\infty}\mathsf x(t),
\end{equation*}
and we consider the dense open set $\Omega:=\{t\in (0,T_\star):\mathsf x'(t)=|u'(t)|>0\}$.
Notice that $\mathsf x$ is strictly increasing in $[0,T_\star)$, since 
$\mathsf x(t_0)=\mathsf x(t_1)$ for some $0\le t_0<t_1<T_\star$ yields 
$u$ constant in $(t_0,t_1)$ which is not allowed by the \minimality\ of
$u$. 
We can thus define the continuous and strictly increasing inverse map $\mathsf y: [0, X) \rightarrow
[0,T_\star)$ such that $\mathsf y(\mathsf x(t))=t$ for every $t\in [0, T_\star)$.
We notice that the set
\begin{equation}
  \label{eq:17}
\Xi := \big\{x\in[0, X): \ u(\mathsf y(x)))\in \critical\phi\big\}
=  \mathsf x\big(\{t\in [0, T_\star): \ \mathsf x'(t) = 0\}\big)
\end{equation}
has Lebesgue measure $0$ by the Morse-Sard Theorem 
and that the map $\mathsf y$ is differentiable on its complement $[0, X)\setminus\Xi$ with
\begin{equation*}
  \mathsf y'(x) = \frac{1}{|u'(\mathsf y(x))|}=\frac{1}{|\nabla\phi(u(\mathsf y(x)))|}. 
\end{equation*}
Since $\mathsf y$ is continuous and increasing, its derivative belongs
to $L^1(0,X')$ for every $X'<X$. We can thus consider the strictly
increasing and locally absolutely continuous function
\begin{equation*}
\vartheta: [0, X) \rightarrow [0,\Theta), \quad \vartheta(x) :=
\int_{[0,x]\setminus \Xi}
{\frac{1}{|\nabla\phi(u(\mathsf y(r)))|} \ \mathrm d r},\quad
\Theta:=\int_{[0, X)\setminus \Xi}
{\frac{1}{|\nabla\phi(u(\mathsf y(r)))|} \ \mathrm d r}.
\end{equation*}
It holds that $\vartheta'(x) = \mathsf y'(x) > 0$ for every $x\in(0,
X)\setminus\Xi$
and $0<\vartheta(x_1)-\vartheta(x_0)\le  \mathsf y(x_1)-\mathsf y(x_0)$
for every $0\le x_0<x_1<X$, so that the composition $\sfz:=\vartheta\circ \mathsf x$ satisfies
\begin{equation}
  \label{eq:14}
  0<\sfz(t_1)-\sfz(t_0)\le t_1-t_0\quad\text{for every $0\le
    t_0<t_1<T_\star$.}
\end{equation}
$\sfz$ is $1$-Lipschitz and differentiable a.e.; moreover,
 $\sfz$ is differentiable in $\Omega$ with
\begin{equation}
  \label{eq:15}
  \sfz'(t)=\vartheta'(\mathsf x(t))\mathsf x'(t)=1  \quad
  \text{for
every $t\in \Omega$},\quad
\sfz'(t)=0\quad\text{a.e.~in }[0,T_\star)\setminus \Omega
\end{equation}
(see e.g. [\cite{leoni2009first}, Theorem 3.44] for the chain rule for absolutely continuous functions).\\
We will denote by $ {\mathsf t}:[0,\Theta)\to [0,T_\star)$ the
continuous inverse map of
$\sfz$ which is differentiable in the dense open set $\sfz(\Omega)$
with derivative $1$. Since ${\mathsf t}$ is increasing, it is of
bounded variation in every compact interval $[0,\Theta']$ with $\Theta'<\Theta$. 
For every $h\in \H$ 
we set $u_h(t):=\langle u(t),h\rangle$,
$w:=u\circ {\mathsf t}:[0,\Theta)\to \H$, and $w_h:=u_h\circ {\mathsf t}:[0,\Theta)\to \R $.
{\normalcolor Since $u_h$ is locally Lipschitz}, $w_h$ is a function of bounded variation 
in every compact interval $[0,\Theta']$ with $\Theta'<\Theta$: we want to show that $w_h$ is
absolutely continuous in $[0,\Theta']$.
To this aim, we use the chain rule for \textrm{BV} functions (see
e.g. [\cite{AmbrosioFuscoPallara00}, Theorem 3.96])
and the facts that $u_h,{\mathsf t}$ are continuous, {\normalcolor $u_h$ is Lipschitz in $[0, \mathsf t(\Theta')]$,} and that ${\mathsf t} $
is continuously differentiable on the open set 
$\sfz(\Omega)$; the Cantor part $\mathrm D^c\, {\mathsf t} $ of the distributional derivative of
${\mathsf t}$ is therefore concentrated on the set $(0,\Theta')\setminus
\sfz(\Omega)$ and the BV chain rule yields
\begin{equation}
  \label{eq:16}
  \mathrm D^c \,w_h=(u_h'\circ {\mathsf t})\mathrm D^c\,
  {\mathsf t}\quad
  \text{where } u_h'(t):=\langle u'(t),h\rangle=\langle
  -\nabla\phi(u(t)),h\rangle=
  -\nabla_h\phi(u(t)).
\end{equation}
On the other hand, for every $s\in (0,\Theta')\setminus \sfz(\Omega)$ we
have
$\mathsf t (s)\in (0, T_\star)\setminus \Omega$ and thus 
$\nabla\phi(u(\mathsf t(s)))=0$. We conclude that $\mathrm D^c
\,w_h=0$ and $w_h$ is {\normalcolor locally absolutely continuous}. 
The same argument shows that the pointwise derivative of $w_h$
vanishes a.e.~in $(0,\Theta)\setminus \sfz(\Omega)$, whereas 
the computation of the derivative of $w$ in $\sfz(\Omega)$ yields
$$w'(s)=u'({\mathsf t}(s)){\mathsf t}'(s)=
u'({\mathsf t}(s))=
-\nabla\phi(u({\mathsf t}(s)))=-\nabla\phi(w(s))$$
Summarizing, we obtain
\begin{equation}
  \label{eq:20}
  w_h'(s)=-\nabla_h\phi(w(s))\quad \text{a.e.~in }(0,\Theta);
\end{equation}
since the righthand side of \eqref{eq:20} is continuous we deduce that
$w_h$ is a $\mathrm C^1$ function and \eqref{eq:20} holds in fact
everywhere in $[0,\Theta)$. Being $w$ continuous and scalarly $\mathrm
C^1$, we deduce that $w$ is of class $\mathrm C^1$ in $[0,\Theta)$ 
and $w$ is a solution of \eqref{eq:1} satisfying $w(s)=u(\mathsf
t(s))$.
If $\Theta$ is finite, the uniform H\"older estimate \eqref{eq:5} shows that
$w$ admits the limit $\bar
w:=\lim_{s\uparrow \Theta}w(s)=\lim_{t\uparrow+\infty}u(t)$. It follows that $\bar w$
is a stationary point of $\phi$, so that extending $w$ by the
constant value $\bar w$ for $t\ge \Theta$ still yields a solution to
\eqref{eq:1}. If we have $T_\star < \infty$, we can extend $\sfz$ by the constant value $\Theta = \lim_{t\uparrow T_\star}\sfz(t) < \infty$ for $t\ge T_\star$. 
Since we have $\range w\subset \overline{\range u}$ and 
$u(t)=w(\sfz(t))$ for every $t\ge 0$, we deduce that $u\succ w$.
Since $u$ is \minimal, we should have $w\equiv u$ so that
$\sfz(t)\equiv t$ for $t\in[0, T_\star)$. \eqref{eq:15} then yields that $[0,T_\star)\setminus
\Omega$ has $0$ Lebesgue measure and \eqref{eq:7} holds.

\medskip\noindent
(6) If $u\in \minsolution\phi$ and $T_\star(u) > 0$, we know that the map $\varphi:t\mapsto \phi(u(t))$ is
of class $\mathrm C^1$, strictly decreasing with $\varphi'(t)<0$
a.e.~in $(0,T_\star)$. It follows that it has a locally absolutely continuous
inverse $\psi$. Conversely, if $\varphi$ has a locally absolutely continuous
left inverse $\psi$ (which is then also the inverse) then $\varphi'(t)=-|\nabla\phi(u(t))|^2\neq 0$ a.e.~in
$(0, T_\star)$, 
so that \eqref{eq:7} holds and $u\in \minsolution\phi$ by the previous
claim (5).
\end{proof} 
We conclude this section with a definition and a simple remark.
\begin{definition}[Eventually \minimal\ solutions]
  \label{def:emaxsolution}
  We say that a solution $u\in \solution\phi$ is \emph{eventually
    \minimal} if there exists a time $T>0$ such that $u'(T)\neq0$ and the curve $t\mapsto u(t+T)$ is a \minimal\ non-constant solution.
\end{definition}
\begin{remark}[Approximation by eventually \minimal\ solutions]
  \label{rem:emaxsolution}
  \upshape
  Any non-constant $u\in \solution\phi$ may be locally uniformly
  approximated by a sequence of eventually \minimal\ solutions keeping
  the same initial data. For every $n\in \N$ it is sufficient to
  choose an increasing sequence $t_n\uparrow T_\star(u)$ with
  $u'(t_n)\neq0$ and replace
  the curve $v_n:=u(\cdot+t_n)$ with the unique
  \minimal\ solution $w_n$ such that $v_n\succ w_n$, given by Theorem
  \ref{thm: max gf}. The curves
  \begin{equation}
    \label{eq:60}
    u_n(t):=
    \begin{cases}
      u(t)&\text{if }0\le t\le t_n,\\
      w_n(t-t_n)&\text{if }t>t_n.
    \end{cases}
  \end{equation}
  are eventually \minimal\ and converge to $u$ uniformly on compact intervals.
  
  Any constant $u\in\solution\phi$ is minimal. 
\end{remark}
\Minimal\ gradient flows will play a crucial role in the proof of De Giorgi's conjecture. Roughly speaking, the conjecture can be proved directly for this class of gradient flows, and in addition, any other gradient flow can be approximated by a sequence of \minimal\ gradient flows.

\section{Approximation of the \minimal\ gradient flow}\label{sec: approx of max gf}

In this section we study a particular family of perturbations that
will be extremely useful to approximate \minimal\ gradient flows.
As a first step, we present a general strategy to force a discrete
solution
of the minimizing movement scheme to stay in a prescribed compact set.
We will always assume that $\phi\in \rmC^1(\H)$
satisfies the uniform quadratic bound \eqref{eq:coercive intro}, so that
\begin{equation}
  \label{eq:108}
  \inf_{y\in \H}\frac 1{2\tau}|x-y|^2+\phi(y)>-\infty\quad
  \text{for every }x\in \H,\ \tau\in (0,\tau_*).
\end{equation}

\subsection{Distance penalizations from compact sets}\label{subsec: 4.1}
\label{subsec:penalization}
Let a time step $\tau > 0$ and a nonempty compact set $\cU\subset \H$ be
fixed. We denote by $\psi_\cU:\H\to \R$ the distance function
\begin{equation}
  \label{eq:8}
  \psi_\cU(x):=\operatorname{dist}(x,\cU)=
  \min_{y\in \cU}|x-y|, 
\end{equation}
 by $\Gamma_\cU$ the closed convex set 
 \begin{equation}
   \label{eq:30}
   \Gamma_\cU:=\Big\{(a,b)\in
   [0,\infty)\times [0,\infty): |\nabla\phi(x)-\nabla\phi(y)|\land 1\le a+b|x-y|\ \text{for every }x\in \cU,\ y\in
   \H \Big\}
 \end{equation}
and by $\omega_\cU:[0,\infty)\to [0,\infty)$ the concave modulus of continuity
\begin{equation}
  \label{eq:18}
  \omega_\cU(r) := \inf \Big\{a+br:(a,b)\in \Gamma_\cU\Big\}.  
\end{equation}
Notice that
\begin{equation}
\text{$\omega_\cU$ is increasing, bounded by $1$,
concave, and satisfies \quad
$\lim_{r\downarrow0}\omega_\cU(r)=0$,}
\label{eq:53}
\end{equation}
with
\begin{equation}
  \label{eq:107}
  |\nabla\phi(x)-\nabla\phi(y)|\land 1\le \omega_\cU(|x-y|)\quad
  \text{whenever }x\in \cU,\ y\in \H.
\end{equation}
In order to prove the limit property of \eqref{eq:53}, we can argue by contradiction;
let us assume that we have instead $\inf_{r>0} \omega_\cU(r)=\bar
a\in (0,1]$. 
Choosing $r=\bar a/(4n)$, $n\in \N$, we see that the couple $(\bar a/2,n)$ does
not belong to $\Gamma_\cU$, so that 
for every $n\in \N$ there exist $x_n\in \cU$ and $y_n\in \H$ such that 
\begin{equation}\normalcolor
1\land |\nabla\phi(x_n)-\nabla\phi(y_n)|-n|x_n-y_n|>\bar
a/2.\label{eq:31}
\end{equation}
In particular $\normalcolor |x_n-y_n|\le 1/n$ so that
$\lim_{n\to\infty}|x_n-y_n|=0$.
Since $x_n\in \cU$ and $\cU$ is compact, we can extract a subsequence
$k\mapsto n(k)$ such that $\lim_{k\to\infty}x_{n(k)}=x\in \cU$, and thus 
$\lim_{k\to\infty}y_{n(k)}=x$ as well and therefore 
$\lim_{k\to\infty}|\nabla\phi(x_{n(k)})-\nabla\phi(y_{n(k)})|=0$ by
the continuity of $\nabla \phi$, a contradiction with
\eqref{eq:31}.

We consider a family of perturbations of the
function $\phi$ 
depending on a parameter $\lambda\ge0$ and on a compact set
$\cU\subset \H$. It is given by
\begin{gather}
  \label{eq:21}
  \vphi\lambda\cU x:=\phi(x)+\lambda\,\psi_\cU(x), \quad
  \vPhi\lambda\cU{\tau,x,y}:=\frac 1{2\tau}|x-y|^2+\vphi\lambda\cU y,
  \\
  \label{eq:22}
  \vY{\tau}\lambda\cU x:=\argmin\vPhi\lambda\cU{\tau,x,\cdot}.
\end{gather}
Our aim is to give a sufficient condition on the choice of $\lambda$ in dependence of $\tau$ and $\cU$ in order to be sure that 
whenever $x\in \cU$ the minimizing set $\vY{\tau}\lambda\cU x$
is nonempty and it is contained in $\cU$ as well.

{\normalcolor In Lemma \ref{lem:def of tau_U}, a rough estimate of $|\nabla\phi(y)|$ of an approximate minimizer $y$ of $\vPhi\lambda\cU{\tau,x,\cdot}$ is given. 
\begin{lemma}\label{lem:def of tau_U}
There exists $\tau_{\cU}\in(0, \tau_*)$ so that for every $y\in\H, \ x\in\cU, \ \tau\in(0, \tau_\cU)$ satisfying 
\begin{equation}\label{eq:am tau_U}
\phi(y) + \frac{1}{2\tau} |x-y|^2  \leq \phi(x) + |x-y|,
\end{equation}
it holds that 
\begin{equation}\label{eq:def of tau_U}
|\nabla\phi(y) - \nabla\phi(x)| \le \frac{1}{2}. 
\end{equation}
\end{lemma}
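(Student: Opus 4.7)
The strategy is to show that the assumption \eqref{eq:am tau_U} forces $|x-y|$ to be uniformly small (in $x\in\cU$) as $\tau\downarrow 0$, and then to exploit the modulus of continuity $\omega_\cU$ from \eqref{eq:18}--\eqref{eq:107} to translate this into smallness of $|\nabla\phi(y)-\nabla\phi(x)|$.

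The first step is a uniform bound on $|x-y|$. Since $\cU$ is compact, the quantities $M:=\max_{x\in\cU}|x|$ and $M_\phi:=\max_{x\in\cU}\phi(x)$ are finite. Combining the lower quadratic bound \eqref{eq:coercive intro} with the elementary inequality $|y|^2\le 2|x|^2+2|x-y|^2$, we obtain for every $y\in\H$ and $x\in\cU$
\[
\phi(x)-\phi(y)\;\le\; M_\phi+\phi_*+\tfrac{1}{\tau_*}M^2+\tfrac{1}{\tau_*}|x-y|^2\;=:\;C+\tfrac{1}{\tau_*}|x-y|^2,
\]
where $C$ depends only on $\phi,\cU,\tau_*$. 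Plugging this into the assumption \eqref{eq:am tau_U} gives
\[
\Bigl(\tfrac{1}{2\tau}-\tfrac{1}{\tau_*}\Bigr)|x-y|^2\;\le\; C+|x-y|.
\]
For $\tau\le\tau_*/4$ the coefficient on the left is at least $1/(4\tau)$, so solving the resulting quadratic inequality in $|x-y|$ yields an estimate of the form $|x-y|\le 2\tau+2\sqrt{\tau^2+\tau C}$, which tends to $0$ as $\tau\downarrow 0$, uniformly in $x\in\cU$.

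The second step is to invoke the modulus of continuity. By \eqref{eq:53}, $\omega_\cU(r)\to 0$ as $r\downarrow 0$, so there exists $\delta>0$ with $\omega_\cU(r)\le 1/2$ for every $r\in[0,\delta]$. From the first step we can pick $\tau_\cU\in(0,\tau_*/4)$ so small that every $(x,y,\tau)$ satisfying the hypotheses with $\tau\in(0,\tau_\cU)$ fulfils $|x-y|\le\delta$. Then \eqref{eq:107} gives
\[
|\nabla\phi(y)-\nabla\phi(x)|\wedge 1\;\le\;\omega_\cU(|x-y|)\;\le\;\tfrac12,
\]
and since the left-hand side is at most $1/2<1$, the $\wedge 1$ can be dropped, yielding \eqref{eq:def of tau_U}.

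The only mildly delicate point is handling the linear term $|x-y|$ on the right of \eqref{eq:am tau_U}, which prevents a direct one-line estimate $|x-y|^2\lesssim\tau$; but as shown above it is absorbed by completing the square (or equivalently solving a scalar quadratic), and the quadratic lower bound on $\phi$ is exactly what is needed to absorb the term $\tfrac{1}{\tau_*}|x-y|^2$ into the coercive $\tfrac{1}{2\tau}|x-y|^2$ once $\tau$ is small enough.
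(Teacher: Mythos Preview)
Your argument is correct and follows the same overall strategy as the paper: first show that \eqref{eq:am tau_U} forces $|x-y|$ to be uniformly small for small $\tau$, then invoke the modulus $\omega_\cU$ via \eqref{eq:107} to conclude. The only difference is in the execution of the first step: the paper obtains the bound on $|x-y|^2$ by quoting an external estimate (\cite[Lemma~2.2.1]{AGS08}), whereas you derive it by hand from the quadratic lower bound \eqref{eq:coercive intro} together with $|y|^2\le 2|x|^2+2|x-y|^2$ and a scalar quadratic inequality. Your version is slightly more self-contained; otherwise the two proofs are interchangeable.
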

\begin{proof}
Since $\lim_{r\downarrow0}\omega_\cU(r)=0$, there exists $\bar{r} > 0$ such that $\omega_\cU(r) \le \frac{1}{2}$ for every $0\le r < \bar{r}$. In view of (\ref{eq:107}), it is sufficient to prove that there exists $\tau_\cU\in(0, \tau_*)$ such that $|x-y| < \bar{r}$ whenever $y\in\H, \ x\in\cU$ satisfy (\ref{eq:am tau_U}) for some $\tau\in(0, \tau_\cU)$.

Let us suppose that (\ref{eq:am tau_U}) holds for $y\in\H, \ x\in\cU, \ \tau\in(0, \tau_*)$. We apply [\cite{AGS08}, Lemma 2.2.1] and (\ref{eq:coercive intro}) in order to obtain 
\begin{eqnarray*}
|x-y|^2 &\leq& \frac{4\tau\tau_*}{\tau_* - \tau} \left(\phi(y) + \frac{1}{2\tau} |x-y|^2 + \phi_* + \frac{1}{\tau_* - \tau} |x|^2\right) \\
&\leq& \frac{4\tau\tau_*}{\tau_* - \tau} \left(\max_{z\in\cU} \phi(z) + \frac{1}{2} + \frac{1}{2} |x-y|^2 + \phi_* + \frac{1}{\tau_* - \tau} \max_{z\in\cU}|z|^2\right).
\end{eqnarray*}
The claim now easily follows.   
\end{proof}
}

The following Lemma \ref{lem:standard-result} is a typical result
for nonsmooth analysis of the distance function.
\begin{lemma}\label{lem:standard-result}
  Let $\LL\cU:=\max_\cU|\nabla\phi|\lor 1$, 
  $\normalcolor 0\le \eta\le \lambda< 1/4, \ \tau\in(0, \tau_\cU), \ x\in\cU$ 
  and $y\in \H$ be an approximate $\eta$-minimizer of
  $\vPhi\lambda\cU{\tau,x,\cdot}$, i.e.
  \begin{equation}
    \label{eq:24}
    \vPhi\lambda\cU{\tau,x,y}\le 
    \vPhi\lambda\cU{\tau,x,w}+\eta|w-y|\quad\text{for every }w\in \H.
  \end{equation}
  Then the vector $\displaystyle \xi:=\frac{y-x}\tau+\nabla \phi(y)$ satisfies
  \begin{equation}
    \label{eq:12}
    |\xi|\le \lambda+\eta,\quad 
  \normalcolor  |y-x|\le (\LL\cU+1/2 + \lambda+\eta)\tau\le 2\LL\cU\tau.
  \end{equation}
  Moreover, if $y\not\in \cU$, 
  then $|\xi|\ge \lambda-\eta$.
\end{lemma}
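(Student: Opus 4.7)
The plan is to derive a first order condition from the approximate minimality \eqref{eq:24} by testing with perturbations $w = y + \theta v$, $\theta \downarrow 0^+$, and then play with different choices of $v$ for the three claims.

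First I would fix $v \in \H$ and insert $w = y + \theta v$ in \eqref{eq:24}. Using that $\phi \in \rmC^1(\H)$ and expanding $|x - y - \theta v|^2$, dividing by $\theta > 0$ and letting $\theta \downarrow 0^+$, one obtains
\begin{equation*}
0 \le \Big\langle \frac{y-x}{\tau} + \nabla\phi(y),\, v\Big\rangle + \lambda\, D^+\psi_\cU(y;v) + \eta |v|,
\end{equation*}
where $D^+\psi_\cU(y;v) := \limsup_{\theta \downarrow 0} \theta^{-1}(\psi_\cU(y+\theta v) - \psi_\cU(y))$. Since $\psi_\cU$ is $1$-Lipschitz, we have the universal bound $D^+\psi_\cU(y;v) \le |v|$. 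Substituting this and setting $v = -\xi$ yields $|\xi|^2 \le (\lambda + \eta)|\xi|$, which proves the first inequality in \eqref{eq:12}.

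For the bound on $|y-x|$, I would first test \eqref{eq:24} with $w = x \in \cU$ (so $\psi_\cU(x) = 0$), obtaining
\begin{equation*}
\frac{1}{2\tau}|y-x|^2 + \phi(y) + \lambda\,\psi_\cU(y) \le \phi(x) + \eta |y-x|,
\end{equation*}
whence, since $\lambda,\eta < 1$ and $\psi_\cU(y) \ge 0$, the hypothesis \eqref{eq:am tau_U} of Lemma \ref{lem:def of tau_U} is satisfied. That lemma then gives $|\nabla\phi(y) - \nabla\phi(x)| \le 1/2$, hence $|\nabla\phi(y)| \le \LL\cU + 1/2$. Writing $y - x = \tau(\xi - \nabla\phi(y))$ and using the already-established bound $|\xi| \le \lambda + \eta$, the triangle inequality delivers the second estimate of \eqref{eq:12}; the bound by $2\LL\cU\tau$ follows from $\LL\cU \ge 1$ and $\lambda + \eta < 1/2$.

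For the final claim, assume $y \notin \cU$; then by compactness of $\cU$ there exists $p \in \cU$ with $\psi_\cU(y) = |y - p| > 0$. For every $z \in \H$ one has $\psi_\cU(z) \le |z - p|$, and differentiating the right-hand side in direction $v$ at $z = y$ gives the sharper one-sided bound
\begin{equation*}
D^+\psi_\cU(y;v) \le \Big\langle \frac{y-p}{|y-p|},\, v\Big\rangle.
\end{equation*}
Plugging this into the first order inequality and testing with $v = -(y-p)/|y-p|$, the directional-derivative term contributes $-\lambda$, yielding $\langle \xi, (y-p)/|y-p|\rangle \le \eta - \lambda$, which forces $|\xi| \ge \lambda - \eta$. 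The only subtle point throughout is handling the non-smoothness of $\psi_\cU$, but this is bypassed cleanly by working only with one-sided directional upper limits and the elementary inequality $\psi_\cU(z) \le |z - p|$ for the chosen nearest point $p$.
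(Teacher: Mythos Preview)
Your proof is correct and follows essentially the same route as the paper: derive a first-order inequality by perturbing $w=y+\theta v$, use the $1$-Lipschitz bound on $\psi_\cU$ for $|\xi|\le\lambda+\eta$, and perturb towards a nearest point $p\in\cU$ for the lower bound $|\xi|\ge\lambda-\eta$. Your explicit verification of hypothesis \eqref{eq:am tau_U} by testing with $w=x$ is a helpful detail that the paper leaves implicit when invoking \eqref{eq:def of tau_U}.
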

\begin{proof}
  Since $\psi_\cU$ is $1$-Lipschitz, the minimality condition
  \eqref{eq:24} yields for every $w\in \H$
  \begin{displaymath}
    \phi(w)+\frac 1{2\tau}|x-w|^2-\phi(y)-\frac1{2\tau}|x-y|^2\ge
    \lambda \psi_\cU(y)-\lambda\psi_\cU(w)-\eta|w-y|
    \ge -(\lambda+\eta) |y-w|.
  \end{displaymath}
  We can choose $w:=y+\theta v$, divide the above inequality by $\theta>0$ and pass to the limit as $\theta\downarrow 0$ obtaining
  \begin{displaymath}
    \langle \xi,v\rangle\ge -(\lambda + \eta) |v|\quad \text{for every $v\in \H$},
  \end{displaymath}
  which yields {\normalcolor the first part of \eqref{eq:12}. The second part of \eqref{eq:12} then follows from the estimate $|y-x| \leq \tau(|\xi|+|\nabla\phi(y) - \nabla\phi(x)| + |\nabla\phi(x)|)$ and \eqref{eq:def of tau_U}.}
  
  If we choose $w:=(1-\theta)y+\theta \hat{y}$ with $\hat{y}\in \cU$
  satisfying $|y-\hat y|=\psi_\cU(y)>0$, we also obtain
  $\psi_\cU(w)=|(1-\theta)y+\theta
  \hat{y}-\hat{y}|=(1-\theta)|y-\hat{y}|$ and
  $|y-w|=\theta |y-\hat y| $ so that 
  \begin{displaymath}
    \phi(w)+\frac 1{2\tau}|x-w|^2-\phi(y)-\frac1{2\tau}|x-y|^2\ge
    \lambda\Big(\psi_\cU(y)-\psi_\cU(w)\Big)-
    \eta|y-w|=
    \theta (\lambda-\eta) |y-\hat{y}|
  \end{displaymath}
  and therefore
  \begin{displaymath}
    \langle \xi,\hat{y}-y\rangle\ge (\lambda-\eta) |y-\hat{y}|
  \end{displaymath}
  which yields $|\xi|\ge \lambda-\eta$.  
\end{proof}
%
The next lemma provides a suitable condition on the choice of $\lambda$.  
\begin{lemma}\label{lem: choice of lambda}
  Let $\cU$ be a compact subset of $\H$,
  $\LL\cU:=\max_\cU|\nabla\phi|\lor 1$,
  $x, z\in \cU$, $\normalcolor \tau\in (0,\tau_\cU)$, and $\normalcolor \lambda,\delta\in [0,1/4)$,
  satisfy 
  \begin{equation}
    \label{eq:19}
    \Big|\frac{z-x}\tau+\nabla\phi(z)\Big|\le \delta,
  \end{equation} 
\begin{equation}
    \label{eq:23}
    \lambda^2> \fourteen\, \LL\cU\, \omega_\cU(3\LL\cU\tau)+
    2\delta^2.
  \end{equation}
  Then $\vY\tau\lambda\cU x$ is nonempty and contained in
  $\cU$.
\end{lemma}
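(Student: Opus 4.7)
The plan is to combine Ekeland's variational principle with a careful comparison between an approximate minimizer of $\vPhi\lambda\cU{\tau,x,\cdot}$ and the given point $z$. The quadratic lower bound \eqref{eq:coercive intro} on $\phi$ together with $\tau<\tau_\cU<\tau_*$ and $\lambda\psi_\cU\ge 0$ ensure that $\vPhi\lambda\cU{\tau,x,\cdot}$ is bounded below and lower semicontinuous, so Ekeland's principle yields, for every $\eta>0$, an $\eta$-approximate minimizer $y_\eta$ satisfying \eqref{eq:24}. If I can establish that, for all sufficiently small $\eta$, every such $y_\eta$ lies in $\cU$, then both the nonemptiness of $\vY\tau\lambda\cU{x}$ and its containment in $\cU$ will follow from the compactness of $\cU$ and the lower semicontinuity of $\vPhi\lambda\cU{\tau,x,\cdot}$ by extracting a convergent subsequence.

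To prove $y:=y_\eta\in\cU$ I argue by contradiction. Assuming $y\notin \cU$, Lemma \ref{lem:standard-result} gives $\lambda-\eta\le |\xi|\le \lambda+\eta$ for $\xi=(y-x)/\tau+\nabla\phi(y)$, together with $|y-x|\le 2L_\cU\tau$; combined with the bound $|z-x|/\tau\le L_\cU+\delta$ from \eqref{eq:19}, this forces $r:=|y-z|\le 3L_\cU\tau$ and hence $\omega_\cU(r)\le \omega:=\omega_\cU(3L_\cU\tau)$. Testing the approximate minimality \eqref{eq:24} against $w=z$, using $(z-x)/\tau=-\nabla\phi(z)+e_z$ with $|e_z|\le \delta$, and invoking Taylor's formula for $\phi$ around $z$ with remainder bounded by $\omega r$, the inequality $\vPhi\lambda\cU{\tau,x,y}\le \vPhi\lambda\cU{\tau,x,z}+\eta r$ simplifies to
\begin{equation*}
\frac{r^2}{2\tau}+\lambda\psi_\cU(y)\le (\delta+\omega+\eta)\,r,
\end{equation*}
while the identity $\xi-e_z=(y-z)/\tau+(\nabla\phi(y)-\nabla\phi(z))$ together with $|\xi|\ge \lambda-\eta$ produces the complementary lower estimate $r/\tau\ge \lambda-\eta-\delta-\omega$.

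The last and most delicate step extracts the desired quadratic contradiction from these two bounds. The key tool is the weighted AM--GM inequality $\delta r\le \tau\delta^2+r^2/(4\tau)$, which absorbs the $\delta r$ term on the right-hand side of the basic inequality into its $r^2/(2\tau)$ term; exploiting $r\le 3L_\cU\tau$ to estimate $\omega r\le 3L_\cU\tau\,\omega$ and inserting the lower bound on $r$, one reaches, upon letting $\eta\downarrow 0$, an estimate of the form $\lambda^2\le 14\,L_\cU\,\omega_\cU(3L_\cU\tau)+2\delta^2$, which directly contradicts hypothesis \eqref{eq:23}. I expect the main obstacle to lie precisely here: a naive squaring of the linear bound $\lambda\le 3(\delta+\omega)$ that falls out of merging the two displays is much too lossy, and preserving the asymmetric coefficients of the hypothesis (genuinely quadratic in $\delta$ but only linear, with factor $L_\cU$, in $\omega$) requires a carefully chosen weighted AM--GM that splits the $\delta r$ and $\omega r$ contributions differently.
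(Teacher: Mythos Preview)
Your overall strategy---Ekeland's principle, compactness of $\cU$, contradiction via Lemma~\ref{lem:standard-result}---is sound, and your two displayed estimates are correct. The gap is exactly where you suspected: the combination you propose in the last paragraph does \emph{not} yield a contradiction with \eqref{eq:23}.

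Indeed, your basic inequality $r^2/(2\tau)\le(\delta+\omega+\eta)r$ already forces $r/\tau\le 2(\delta+\omega+\eta)$; together with your lower bound $r/\tau\ge\lambda-\eta-\delta-\omega$ this gives $\lambda\le 3(\delta+\omega)$ after $\eta\downarrow0$, and no amount of AM--GM reshuffling can beat this. Your specific recipe (absorb $\delta r$ by $\tau\delta^2+r^2/(4\tau)$, bound $\omega r\le 3\LL\cU\tau\omega$, insert the lower bound for $r$) produces $(\lambda-\delta-\omega)^2\le 4\delta^2+12\LL\cU\omega$, which is \emph{compatible} with $\lambda^2>14\LL\cU\omega+2\delta^2$: take $\omega=0$, $\delta=0.1$, $\lambda=0.15$, $\LL\cU=1$; then $\lambda^2=0.0225>0.02=2\delta^2$ while $(\lambda-\delta)^2=0.0025\le 0.04=4\delta^2$. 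The coefficient $2$ in front of $\delta^2$ simply cannot be recovered from your two scalar inequalities, because passing from $\xi$ to $r/\tau$ costs you an additive $\delta+\omega$ that, upon squaring, inflates the $\delta^2$ contribution.

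The paper avoids this loss by a different algebraic manipulation: instead of separating the quadratic terms $|y-x|^2-|z-x|^2$ via $|y-z|^2+2\langle y-z,z-x\rangle$, it writes them as the inner product $\langle (y-x)+(z-x),\,y-z\rangle$ and expands $\phi$ around $y$ rather than $z$. After substituting $y-x=\tau\xi-\tau\nabla\phi(y)$ and $y-z=\tau\xi-\tau\nabla\phi(y)-(z-x)$, the polarization identity produces $\tfrac\tau2|\xi|^2$ directly on the left, with the remaining term $\tfrac1{2\tau}|\tau\nabla\phi(y)+(z-x)|^2\le\tfrac\tau2(\omega+\delta)^2$ on the right. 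One then reads off $(\lambda-\eta)^2\le 2\omega^2+2\delta^2+12\LL\cU\omega\le 14\LL\cU\omega+2\delta^2$ with no $-\delta-\omega$ correction on the left, and the contradiction is immediate. The point is to keep the vector identity for $\xi$ intact rather than reducing everything to the scalar $r=|y-z|$.
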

\begin{proof}
  We argue by contradiction and we suppose that 
  \begin{equation}
    \text{there exists $y\in
      \H\setminus \cU$ such that}\quad
    \vPhi\lambda\cU{\tau,x,y}\le \min_{u\in
      \cU}\vPhi\lambda\cU{\tau,x,u}.\label{eq:28}
\end{equation}
  We can apply Ekeland variational principle in $\H$ to the continuous
  function
  \begin{displaymath}
   \normalcolor w\mapsto \Phi_{\lambda,\cU}(\tau,x,w)
  \end{displaymath}
  which is bounded from below by \eqref{eq:108}.
  For every $\eta>0$ we
  can find $y_\eta\in \H$ satisfying the properties
  \begin{gather}
    \label{eq:25}
    \vPhi\lambda\cU{\tau,x,y_\eta}+\eta|y_\eta-y|\le
    \vPhi\lambda\cU{\tau,x,y},\\
    \label{eq:26}
    \vPhi\lambda\cU{\tau,x,y_\eta}\le
    \vPhi\lambda\cU{\tau,x,w}+\eta|y_\eta-w|
    \quad\text{for every }w\in \H.
  \end{gather}
  \eqref{eq:25} and \eqref{eq:28} yield that $y_\eta\not\in \cU$
  and
  \begin{equation}
  \label{eq:27}
  \phi(y_\eta)+\frac1{2\tau}|y_\eta-x|^2+\lambda \psi_\cU(y_\eta)\le \phi(z)+\frac1{2\tau}|z-x|^2.
  \end{equation}
  Choosing $\eta$ sufficiently small so that 
  $\normalcolor \lambda+\delta+\eta\le 1/2$, \eqref{eq:12} and \eqref{eq:19} yield
  \begin{equation}
    \label{eq:109}
    \normalcolor |y_\eta-x|\le (\LL\cU + 1/2 + \lambda+\eta)\tau\le 2\LL\tau\,\tau,\quad
    |z-x|\le
    (\LL\cU +\delta)\tau,
  \end{equation}
  and therefore
  \begin{equation}
    \label{eq:110}
  \normalcolor  |y_\eta-z|\le (2\LL\cU + 1/2 + \lambda+\delta+\eta)\tau\le 3\LL\cU \tau.
  \end{equation}
  Since $\omega_\cU(3\LL\tau\,\tau)<\lambda^2\le 1$ by \eqref{eq:23},
  we get
  the estimate
  \begin{align*}
    \big|\nabla\phi((1-t)y_\eta+t
      z)-\nabla\phi(y_\eta)\big|
    &\le 
      \big|\nabla\phi((1-t)y_\eta+t
      z)-\nabla\phi(z)\big|+\big|\nabla\phi(z)-\nabla\phi(y_\eta)\big|
     \\& \le 2\omega_\cU(|y_\eta-z|)\quad
         \text{for every }t\in [0,1].
  \end{align*}
  The integral mean value Theorem
  \begin{align*}
    \phi(z)-\phi(y_\eta)-\langle \nabla\phi(y_\eta), z-y_\eta\rangle
    &=
      \int_0^1 \langle \nabla\phi((1-t)y_\eta+t
      z)-\nabla\phi(y_\eta),z-y_\eta\rangle\,\rmd t
  \end{align*}
  yields
	\begin{equation}
          \label{eq:29}
          \big|\phi(z)-\phi(y_\eta)-\langle \nabla\phi(y_\eta),z-y_\eta\rangle\big|
          \le 2|z-y_\eta|\omega_\cU(|z-y_\eta|).
  \end{equation}
  So, combining \eqref{eq:27} and \eqref{eq:29} we obtain
  \begin{align*}
    \frac1{2\tau}|y_\eta-x|^2-\frac1{2\tau}|z-x|^2-\langle \nabla
    \phi(y_\eta),z-y_\eta\rangle+
    \lambda \psi_\cU(y_\eta)&\le 
    \phi(z)-\phi(y_\eta)- \langle \nabla
                            \phi(y_\eta),z-y_\eta\rangle
                            \\&\le 2|z-y_\eta|\omega_\cU(|z-y_\eta|).
  \end{align*}
  Using the identity $|a|^2-|b|^2=\langle a+b,a-b\rangle$ and
  neglecting the positive term $\lambda \psi_\cU(y_\eta)$ we get
  \begin{equation*}
    \frac 1{2\tau} \langle y_\eta-x+2\tau \nabla\phi(y_\eta)+z-x,y_\eta-z\rangle\le 
    2|z-y_\eta|\omega_\cU(|z-y_\eta|).
  \end{equation*}
  Setting $\xi_\eta:= \frac{y_\eta-x}\tau+\nabla\phi(y_\eta)$ as in
  Lemma \ref{lem:standard-result} we get 
  \begin{equation*}
    y_\eta-z=y_\eta-x+x-z=
    \tau \xi_\eta-\tau\nabla \phi(y_\eta)+x-z.
  \end{equation*}
Thus, we obtain
  \begin{equation*}    
  \frac 1{2\tau}\langle \tau\xi_\eta+\tau\nabla\phi(y_\eta)+z-x,\tau\xi_\eta-\tau\nabla \phi(y_\eta)-(z-x)\rangle
    \le 
    2|z-y_\eta|\omega_\cU(|z-y_\eta|), 
  \end{equation*}
yielding
  \begin{equation*}
    \frac\tau2|\xi_\eta|^2\le 
    \frac 1{2\tau}|\tau\nabla\phi(y_\eta)-(x-z)|^2+
    2|z-y_\eta|\omega_\cU(|z-y_\eta|).
  \end{equation*}
  Using \eqref{eq:19} and the fact that $|\xi_\eta|\ge \lambda-\eta$
  if $\eta\le \lambda$ 
  by Lemma \ref{lem:standard-result}, we obtain
  \begin{align*}
    |\lambda-\eta|^2 &\le 
                       2\Big(|\nabla\phi(y_\eta)-\nabla\phi(z)|^2+\delta^2\Big)+
                       \frac 4\tau |z-y_\eta|\omega_\cU(|z-y_\eta|)
    \\&\le 
        2\Big(\omega_\cU^2(3\LL\cU \tau)+\delta^2\Big)+
        12\LL\cU  \,\omega_\cU(3\LL\cU \tau)
        \le \fourteen \LL\cU  \,\omega_\cU(3\LL\cU \tau)+ 2\delta^2,
  \end{align*}
  where we used \eqref{eq:110} 
  and the fact that $\omega_\cU\le 1$.
  Since $\eta$ can be chosen arbitrarily small, 
  we get a contradiction with \eqref{eq:23}.
\end{proof}
Notice that the use of Ekeland variational principle in the previous
proof is only needed when $\H$ has infinite dimension. 
If $\H$ has finite dimension, one can directly select $y_\eta$ as the minimizer
of $\Phi_{\lambda,\cU}(\tau,x,\cdot)$ in $\H$ setting $\eta=0$.
\begin{corollary}
  \label{cor:invariance}
  Let $\cU\subset \H$ be a compact set, $\LL\cU:=1\lor 
  \max_\cU |\nabla\phi|$, $\normalcolor \lambda,\delta\in [0,1/4)$,
  $\normalcolor \tau\in (0,\tau_\cU)$.
  If \eqref{eq:23} holds and
  for every $x\in \cU$ there exists $z\in \cU$ satisfying \eqref{eq:19},
 then for every initial choice of
 $u_0\in \cU$ the set $\MS\tau{\varphi_{\lambda,\cU}}{u_0}$ is nonempty and every discrete solution $U\in
   \DMM\tau{\varphi_{\lambda,\cU}}{u_0} $ takes values in $\cU$.
\end{corollary}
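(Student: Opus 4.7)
My plan is a straightforward induction on the iteration index $n$, applying Lemma \ref{lem: choice of lambda} at each step to propagate membership in $\cU$. Fix $u_0\in\cU$ and suppose inductively that some $U^{n-1}_\tau\in\cU$ has been constructed (the base case $n=1$ being $U^0_\tau:=u_0$). Set $x:=U^{n-1}_\tau$; by the hypothesis of the corollary there exists $z\in\cU$ satisfying \eqref{eq:19}, and the parameters $\lambda,\delta,\tau$ satisfy \eqref{eq:23} by assumption. Lemma \ref{lem: choice of lambda} then guarantees that $\vY\tau\lambda\cU{x}$ is nonempty and that $\vY\tau\lambda\cU{x}\subset\cU$.

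This has two consequences that together yield the claim. First, since $\vY\tau\lambda\cU{U^{n-1}_\tau}$ is nonempty, we can always pick some $U^n_\tau\in\vY\tau\lambda\cU{U^{n-1}_\tau}$, and hence the set $\MS\tau{\varphi_{\lambda,\cU}}{u_0}$ of minimizing sequences is nonempty (by induction on $n$, choosing any minimizer at each step). Second, \emph{any} such choice $U^n_\tau$ belongs to $\cU$, so the induction hypothesis propagates to $n$. Thus every minimizing sequence $(U^n_\tau)_{n\in\N}\in\MS\tau{\varphi_{\lambda,\cU}}{u_0}$ satisfies $U^n_\tau\in\cU$ for all $n\ge 0$.

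Passing to the piecewise constant interpolation \eqref{eq:34}, every discrete solution $U\in\DMM\tau{\varphi_{\lambda,\cU}}{u_0}$ takes values in $\cU$, which is exactly the conclusion. The only subtle point to check while writing this out is that the inductive step applies Lemma \ref{lem: choice of lambda} with the \emph{same} compact set $\cU$ and the \emph{same} $\lambda,\delta$ at every step; since the only hypothesis that depends on the current iterate $x=U^{n-1}_\tau$ is the existence of $z\in\cU$ with \eqref{eq:19}, and this is assumed uniformly over $x\in\cU$, the induction goes through without any deterioration of the constants. No genuine obstacle arises: the hard work has already been done in Lemmas \ref{lem:standard-result} and \ref{lem: choice of lambda}, and the corollary is essentially a clean iteration of the single-step invariance property.
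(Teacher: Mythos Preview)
Your proof is correct and is precisely the argument the paper intends: the corollary is stated without proof because it is an immediate inductive application of Lemma \ref{lem: choice of lambda}, exactly as you describe. The single-step invariance of $\cU$ under the minimization (nonemptiness of $\vY\tau\lambda\cU{x}$ together with $\vY\tau\lambda\cU{x}\subset\cU$ for $x\in\cU$) propagates by induction since the hypothesis \eqref{eq:19} is assumed uniformly over $x\in\cU$ and the constants $\lambda,\delta,\tau$ are fixed.
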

\subsection{Strong approximation of \minimal\ solutions}\label{subsec: 4.2}
We can now apply Lemma \ref{lem: choice of lambda} and Corollary \ref{cor:invariance} in order to construct good discrete
solutions by choosing suitable compact subsets of the range of
$u\in \solution\phi$. 
We distinguish two cases:
the next lemma contains the fundamental estimates  
in the case when $\phi$ is bounded on the range of a solution $u$;
Lemma \ref{lem:basic estimate2} 
will deal with solutions $u$ for which $\phi(u(t))\to-\infty$ as
$t\to+\infty$.

We introduce the following notation (recall Remark
\ref{rem:bounded-interval}): 
if $u\in \solution\phi$, $T>0$,
$\tau>0$ we set
  \begin{equation}
    \label{eq:55}
    \mathcal U(\tau, T):=\{u(n\tau): 0\le n \le \nn\tau T\}.
  \end{equation}
%
\begin{lemma}\label{lem:basic estimate}
  Let $u\in \solution\phi$ such that 
  \begin{equation}
    \label{eq:87}
    \inf_{t\ge 0}\phi(u(t))=\lim_{t\uparrow\infty}\phi(u(t))>-\infty.
  \end{equation}
  For every $\normalcolor \eps\in (0,1/4)$ there exist
  $T=T(\eps)\ge \eps^{-1}$ and $\bar{\tau}=\bar{\tau}(\eps)\in (0,1)$
  such that
  for every $0 < \tau \le \bar{\tau}$ 
  the set $\DMM\tau{\varphi_{\eps,\cU(\tau,T)}}{u(0)}$ is nonempty, 
  {\normalcolor every element $U\in \DMM\tau{\varphi_{\eps,\cU(\tau,T)}}{u(0)}$ 
  takes values in $\cU(\tau,T)\subset u([0,T+1])$} and satisfies
  \begin{equation}\label{eq: 57 2}
  \phi(U(t))\le \phi(u(t\land T))\quad\text{for every }t\ge 0. 
  \end{equation}
  \normalcolor Moreover, for every $S > 0$, it holds that
  \begin{equation}\label{eq:mm ext}
  \DMM\tau{\varphi_{\eps,\cU(\tau,T)}}{u(0),S} = \{U|_{[0, S]} \ | \ U\in \DMM\tau{\varphi_{\eps,\cU(\tau,T)}}{u(0)}\}. 
  \end{equation}
\end{lemma}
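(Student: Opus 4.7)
My plan is to invoke Corollary~\ref{cor:invariance} on the compact sets $\cU=\cU(\tau,T)$ with $\lambda=\eps$, reducing everything to verifying its hypotheses by choosing $T$ and $\bar\tau$ judiciously. Since \eqref{eq:87} combined with \eqref{eq:4} gives $\int_0^\infty|\nabla\phi(u(t))|^2\,\rmd t<\infty$, I can select $T\ge\eps^{-1}$ such that $|\nabla\phi(u(T))|$ is arbitrarily small; if $T_\star(u)<\infty$, any $T\ge T_\star(u)$ works trivially. For $\tau\le 1$, $\cU\subset u([0,T+1])$ lies in a fixed compact set, so $\LL\cU\le L_*:=1\lor\max_{u([0,T+1])}|\nabla\phi|$ independently of $\tau$, and $\omega_\cU$ is majorised by the fixed modulus corresponding to $u([0,T+1])$.

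To verify condition \eqref{eq:19}, I treat interior and endpoint points of $\cU$ separately. For $x=u(n\tau)$ with $n<\nn\tau T$ I take $z=u((n+1)\tau)\in\cU$; the gradient flow equation \eqref{eq:1} gives
\begin{equation*}
\frac{z-x}{\tau}+\nabla\phi(z)=\frac{1}{\tau}\int_{n\tau}^{(n+1)\tau}\bigl(\nabla\phi(u((n+1)\tau))-\nabla\phi(u(s))\bigr)\,\rmd s,
\end{equation*}
whose norm is controlled by $\omega_\cU(L_*\tau)$ via \eqref{eq:107}. For $x=u(\nn\tau T\tau)$, take $z=x$, so the expression reduces to $\nabla\phi(u(\nn\tau T\tau))$, bounded by $|\nabla\phi(u(T))|+\omega_\cU(L_*\tau)$. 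Choosing $T$ with $|\nabla\phi(u(T))|\le\eps/4$ and $\bar\tau$ so small that $\omega_\cU(3L_*\tau)\le\eps^2/(28L_*)$ and $\omega_\cU(L_*\tau)\le\eps/4$ makes the resulting $\delta$ small enough for \eqref{eq:23} to hold. Corollary~\ref{cor:invariance} then yields nonemptiness of $\MS\tau{\varphi_{\eps,\cU}}{u(0)}$ and invariance of $\cU$ under the discrete scheme, giving $\cU\subset u([0,T+1])$ as required.

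For the energy inequality \eqref{eq: 57 2} I argue by induction that $\phi(U_\tau^n)\le\phi(u((n\land\nn\tau T)\tau))$ for every $n$, which suffices because $\phi\circ u$ is non-increasing. Write $U_\tau^{n-1}=u(k\tau)\in\cU$ with $k\ge(n-1)\land\nn\tau T$ by the inductive hypothesis and the monotonicity of $\phi\circ u$. Testing the minimality of $U_\tau^n$ against $V=u((k+1)\tau)\in\cU$ (or $V=U_\tau^{n-1}$ when $k=\nn\tau T$), where $\psi_\cU$ vanishes at both points, and combining with the energy identity $\phi(u(k\tau))-\phi(u((k+1)\tau))=\int_{k\tau}^{(k+1)\tau}|u'|^2\,\rmd s$ and the Cauchy--Schwarz bound $|u((k+1)\tau)-u(k\tau)|^2\le\tau\int_{k\tau}^{(k+1)\tau}|u'|^2\,\rmd s$, yields
\begin{equation*}
\phi(U_\tau^n)\le\tfrac12\bigl(\phi(u(k\tau))+\phi(u((k+1)\tau))\bigr).
\end{equation*}
When $u$ is not constant on $[k\tau,(k+1)\tau]$, this average is strictly below $\phi(u(k\tau))$, so since $U_\tau^n\in\cU$ and $\phi\circ u$ is non-increasing, $U_\tau^n=u(j\tau)$ with $j\ge k+1$; when $u$ is constant there, the two $\phi$-values coincide, and the bound directly reads $\phi(U_\tau^n)\le\phi(u((k+1)\tau))$. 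In either case $\phi(U_\tau^n)\le\phi(u((k+1)\tau))\le\phi(u((n\land\nn\tau T)\tau))$, closing the induction.

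Finally, the identity \eqref{eq:mm ext} is immediate: the inclusion $\supseteq$ is tautological, and conversely any finite minimizing sequence in $\MS\tau{\varphi_{\eps,\cU}}{u(0),\nn\tau S}$ ends at a point $U_\tau^{\nn\tau S}\in\cU$ by the previous arguments, so Corollary~\ref{cor:invariance}, applied iteratively starting from that endpoint, extends it to an infinite minimizing sequence in $\cU$. The delicate point is synchronising the choice of $T$ (which controls $|\nabla\phi(u(T))|$ via $L^2$-integrability of $u'$) with $\bar\tau$ so that the single endpoint $u(\nn\tau T\tau)$---which has no ``next'' point in $\cU$---still satisfies \eqref{eq:19}; handling stationary plateaus of $u$ in the energy induction, where $\phi\circ u$ is flat and the strict index argument degenerates, is the main subtlety, resolved by observing that in that case the same estimate already delivers the desired bound directly.
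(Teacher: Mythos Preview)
Your proof is correct and follows essentially the same approach as the paper's: selecting $T$ via the $L^2$-integrability of $|\nabla\phi(u(\cdot))|$ to control the endpoint, verifying the hypotheses of Corollary~\ref{cor:invariance} on the fixed compact $u([0,T+1])$, and running the same energy induction with the test competitor $V=u((k+1)\tau)$ combined with the Cauchy--Schwarz bound on $|u((k+1)\tau)-u(k\tau)|^2$. The only cosmetic differences are that the paper phrases the induction hypothesis as $\phi(U_\tau^n)\le\phi(u(n\tau\land T))$ rather than your $\phi(U_\tau^n)\le\phi(u((n\land\nn\tau T)\tau))$, and you should take strict inequalities in your choice of $\bar\tau$ (and ensure $\bar\tau<\tau_\cK$ from Lemma~\ref{lem:def of tau_U}) so that \eqref{eq:23} holds strictly.
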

\begin{proof}
Since $u$ satisfies \eqref{eq:87}, the identity \eqref{eq:4} 
  yields $\int_0^\infty |\nabla\phi(u(t))|^2\,\rmd t<\infty$ and therefore
  \begin{equation}\label{eq: liminf is zero}
    \liminf_{t\uparrow\infty} |\nabla\phi(u(t))| = 0.
  \end{equation}
{\normalcolor We select $T\ge \eps^{-1}$ such that $|\nabla\phi(u(T))|\le
  \eps/4$ and 
  consider the compact set $\mathcal K:=u([0,T+1])$;}
  notice that $\cU(\tau,T)\subset \cK$ for every $\tau\le 1$.

  We set $L:=1\lor \max_\cK|\nabla\phi|$ and
  we choose $\delta:=\eps/2$ and 
  {\normalcolor $\bar\tau<\tau_\mathcal K \land 1$ (with $\tau_\mathcal K$ as in Lemma \ref{lem:def of tau_U})} so that 
$(\fourteen \,L+1)\omega_\cK(3L\bar \tau)<\eps^2/2$;
in particular
\begin{equation}
  \label{eq:88}
  \fourteen \,L\omega_\cK(3L\bar \tau) +2\delta^2
  <\eps^2,\quad
  \omega_\cK(L\bar \tau)\le\delta/2.
\end{equation}
  We observe that 
  for every $x=u((n-1)\tau)\in \cU$, $1\le n\le N$, $N=\nn\tau T$, $\normalcolor \tau\in (0, \bar{\tau}]$,
  the choice $z:=u(n\tau)$
  satisfies \eqref{eq:19} since 
  \begin{align}
    \notag\frac{z-x}\tau+\nabla\phi(z)
    &=
      \frac{u(n\tau)-u((n-1)\tau)}\tau+\nabla\phi(u(n\tau))
      \\
    \label{eq:91}
    &=\frac1\tau\int_{(n-1)\tau}^{n\tau} \Big(\nabla\phi(u(n\tau))-\nabla\phi(u(r))\Big)              \,\mathrm d r  
  \end{align}
  and therefore
  \begin{equation}
    \label{eq:90}
    \left|    \frac{z-x}\tau+\nabla\phi(z)\right|\le \omega_{\mathcal
      K}(L\tau)\le \omega_{\mathcal
      K}(L\bar \tau)\le \delta/2
  \end{equation}
  by \eqref{eq:88}. Notice that $|u'(t)|=|\nabla\phi(u(t))|\le L$ for
  $t\in [0,T+1]$ so that $|u(n\tau)-u(r)|\le L\tau$ whenever $r\in
  ((n-1)\tau,n\tau]$.
  
  For $x=u(N\tau)$ we can choose $z=x=u(N\tau)$, since in this case
  \begin{displaymath}
    |\nabla\phi(z)|\le |\nabla\phi(z) - \nabla\phi(u(T))| + |\nabla\phi(u(T))| \le \omega_{\mathcal{K}}(L\tau) + \frac{\delta}{2} \le \delta. 
  \end{displaymath}
  
  Since $\omega_\cU(r)\le \omega_\cK(r)$, we can apply Lemma \ref{lem:
    choice of lambda} with the choice $\lambda:=\eps$ 
  thanks to \eqref{eq:88}: we
  obtain the fact that $\DMM\tau{\varphi_{\eps,\cU(\tau,T)}}{u(0)}$ is nonempty, every element $U\in \DMM\tau{\varphi_{\eps,\cU(\tau, T)}}{u(0)}$ 
  takes values in $\cU(\tau,T)$ {\normalcolor and (\ref{eq:mm ext}) holds}.

  In order to prove \eqref{eq: 57 2} we write
  $U(t)=\sum_{n}U^n_\tau\nchi(t/\tau-(n-1))$ for $t > 0$ and we observe that
  \eqref{eq: 57 2} is equivalent to 
  \begin{equation}
    \label{eq:58}
    \phi(U^n_\tau)\le \phi(u(n\tau\land T))\quad\text{for every }n\in \N
  \end{equation}
  thanks to the monotonicity of $t\mapsto \phi(u(t))$.

  We argue by induction, observing that \eqref{eq:58} is clearly true
  for $n=0$. 
  
  If  
  $\phi(U^{n-1}_\tau)\le \phi(u((n-1)\tau))$ for some $1\le n\le N$, then  
  we can deduce that
  $U^{n-1}_\tau=u(k\tau)$ for some $k\ge n-1$.

  If $k>n-1$ then we easily get $\phi(U^n_\tau)\le
  \phi(U^{n-1}_\tau)\le \phi(u(k\tau))\le \phi(u(n\tau))$.

    It remains to consider the case $k=n-1$,
  i.e. $U^{n-1}_\tau=u((n-1)\tau)$. If $\phi(u(n\tau))=\phi(u((n-1)\tau))$, the induction step is
  obvious. If $\phi(u(n\tau))<\phi(u((n-1)\tau))$, then 
  it is sufficient to observe that $\vPhi\lambda\cU{\tau,u((n-1)\tau),
    u(n\tau)} 
  < \phi(u((n-1)\tau))$. Indeed, it then holds {\normalcolor by \eqref{eq:4}} that 
\begin{align*}
  \phi(u(n\tau))&+\frac{1}{2\tau} |u(n\tau) - u((n-1)\tau) |^2 
   \leq
     \phi(u(n\tau))+\frac{1}{2}\int_{(n-1) \tau}^{n\tau}{|u'(r)|^2 \, \mathrm d r}
  \\&= 
      \phi(u(n\tau))+ \frac{1}{2}\big(\phi(u((n-1)\tau)) -
      \phi(u(n\tau))\big)
    \\&  =
       \phi(u((n-1)\tau))-
       \frac{1}{2}\big(\phi(u((n-1)\tau)) - \phi(u(n\tau))\big)
       <\phi(u((n-1)\tau)),
\end{align*}
so that $U^n_\tau$ belongs to $\{u(k\tau):n\le k\le N\}$ 
and thus satisfies
$\phi(U^n_\tau)\le \phi(u(n\tau))$. 

Eventually, for $n> N$, the induction step is trivial.  
 \end{proof}
\begin{remark}
  \label{rem:liminf}
\upshape 
\normalcolor 
The proof shows that the statement of Lemma \ref{lem:basic estimate} in fact holds for every $u\in\solution\phi$ satisfying 
\eqref{eq: liminf is zero}.
\end{remark}
We now consider the case when $\phi$ is unbounded on $\range u$.
\begin{lemma}\label{lem:basic estimate2}
  Let $u\in \solution\phi$ such that 
  \begin{equation}
    \label{eq:89}
    \inf_{t\ge 0}\phi(u(t))=\lim_{t\uparrow\infty}\phi(u(t))=-\infty.
  \end{equation}
  For every $\normalcolor \eps\in (0,1/4), \ T > 0$ there exist
  $\bar \tau=\bar\tau(\eps,T)\in (0,1)$ and $\normalcolor \bar T=
  \bar T(T)\ge T$
  such that 
  for every $0 < \tau \le \bar{\tau}$ 
  the set $\DMM\tau{\varphi_{\eps,\cU(\tau,\bar T)}}{u(0),T}$ is nonempty,
  every element $\normalcolor U\in \DMM\tau{\varphi_{\eps,\cU(\tau,\bar
      T)}}{u(0),T}$ 
  takes values in $\cU(\tau,{\bar T})
  \subset u([0,\bar T+1])$ 
  and satisfies
  \begin{equation}\label{eq: 57 2bis}
    \phi(U(t))\le \phi(u(t))\quad\text{for every }t\in [0,T]. 
  \end{equation}
\normalcolor Moreover, for every $0\le S \le T$, it holds that
\begin{equation}\label{eq:mm ext 2}
\DMM\tau{\varphi_{\eps,\cU(\tau,\bar
      T)}}{u(0),S} = \{U|_{[0, S]} \ | \ U\in\DMM\tau{\varphi_{\eps,\cU(\tau,\bar
      T)}}{u(0),T}\}. 
\end{equation}
\end{lemma}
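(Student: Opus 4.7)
The plan is to adapt the proof of Lemma~\ref{lem:basic estimate}, with the essential modification that we can no longer invoke $\liminf_{t\uparrow\infty}|\nabla\phi(u(t))|=0$ to close the induction at the terminal sample point. Since $\DMM\tau{\cdot}{u(0),T}$ consists of discrete solutions of bounded length $N=\nn\tau T$, it suffices to guarantee the invariance of $\cU(\tau,\bar T)$ under the scheme for only $N$ iterations, and we would select $\bar T=\bar T(T)\ge T$ large enough that the discrete trajectory never visits the endpoint $u(\nn\tau{\bar T}\tau)$ within these $N$ steps.

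First I fix $T>0$, pick a candidate $\bar T\ge T$ (to be enlarged), and set $\cK:=u([0,\bar T+1])$, which is compact by the local H\"older estimate \eqref{eq:5}. Writing $L:=1\lor\max_\cK|\nabla\phi|$ and $\delta:=\eps/2$, I then choose $\bar\tau\in(0,\tau_\cK\land 1)$, exactly as in Lemma~\ref{lem:basic estimate}, so that $\fourteen\,L\,\omega_\cK(3L\bar\tau)+2\delta^2<\eps^2$ and $\omega_\cK(L\bar\tau)\le\delta/2$. For every $x=u(k\tau)\in\cU(\tau,\bar T)$ with $0\le k<\nn\tau{\bar T}$, the choice $z:=u((k+1)\tau)\in\cU(\tau,\bar T)$ satisfies $\bigl|\tfrac{z-x}\tau+\nabla\phi(z)\bigr|\le\omega_\cK(L\tau)\le\delta$ by the integral identity~\eqref{eq:91}, so Lemma~\ref{lem: choice of lambda} applies at any interior point.

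The induction over the $N$ iterations proceeds as in Lemma~\ref{lem:basic estimate}: testing against $V=u(n\tau)$ gives $\phi(U^n_\tau)\le\phi(u(n\tau))$, and by the injectivity of $\phi$ on $\range u$ (Lemma~\ref{le:preliminary}(iii)) we obtain $U^n_\tau=u(k_n\tau)$ with $k_n\ge n$. The new ingredient, compared with Lemma~\ref{lem:basic estimate}, is to show the upper bound $k_n<\nn\tau{\bar T}$ for every $n\le N$, so that the interior choice $z=u((k_n+1)\tau)$ remains available at each step. I would derive this from the a priori bound $|U^n_\tau-U^{n-1}_\tau|\le 2L\tau$ of Lemma~\ref{lem:standard-result}, which controls how far the discrete solution can travel along $\range u$ in a single step, together with the monotonicity $\phi(U^n_\tau)\le\phi(U^{n-1}_\tau)$. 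Choosing $\bar T=\bar T(T)$ large enough ensures that, after $N$ such increments starting from $u(0)$, the discrete trajectory remains strictly in the interior of $\cU(\tau,\bar T)$, and this closes the induction.

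The remaining claims then follow as at the end of the proof of Lemma~\ref{lem:basic estimate}: \eqref{eq: 57 2bis} is a direct consequence of $\phi(U^n_\tau)\le\phi(u(n\tau))$ and the monotonicity of $t\mapsto\phi(u(t))$, whereas \eqref{eq:mm ext 2} reduces to the fact that the restriction to $[0,S]$ of any discrete solution in $\DMM\tau{\cdot}{u(0),T}$ lies in $\DMM\tau{\cdot}{u(0),S}$, and conversely that every $U\in\DMM\tau{\cdot}{u(0),S}$ admits an extension past $S$ by reapplying the same inductive step. The main obstacle I anticipate is the self-referential dependence highlighted above: since $L=L(\bar T)$ depends on $\cK=u([0,\bar T+1])$, and since $\phi\circ u\to-\infty$ may force $L(\bar T)$ to grow with $\bar T$, closing the induction requires a careful quantitative calibration of $\bar T$ in terms of $L(\bar T)$ and $T$, which is the delicate point of the argument.
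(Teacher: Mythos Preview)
Your plan tracks the paper's setup through the choice of $\cK$, $L$, $\delta$, $\bar\tau$ and the verification that Lemma~\ref{lem: choice of lambda} applies at every interior sample point $u(k\tau)$, $k<\nn\tau{\bar T}$. The departure comes in how you prevent the trajectory from reaching the terminal point, and here the circularity you flag at the end is not a technical nuisance to be calibrated away: it is a genuine obstruction that breaks your argument. Take $\phi(x)=-|x|^2/2$ (which satisfies \eqref{eq:coercive intro} with any $\tau_*<1$) and $u(0)\neq0$, so that $u(t)=u(0)e^t$ and $L(\bar T)\sim|u(0)|e^{\bar T}$. Your step bound from Lemma~\ref{lem:standard-result} gives only $|U^N-u(0)|\le 2L(\bar T)N\tau\sim 2|u(0)|e^{\bar T}T$, whereas the endpoint sits at distance $|u(\bar T)-u(0)|\sim|u(0)|e^{\bar T}$. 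The former dominates the latter for every $T>1/2$, so enlarging $\bar T$ never helps: the available bound grows at least as fast as the gap you need. More conceptually, the bound $|U^n-U^{n-1}|\le 2L\tau$ controls distance in $\H$, not position along $\range u$, and there is no general mechanism by which a distance bound of size $O(L(\bar T)T)$ keeps the trajectory away from a point whose distance from $u(0)$ may be $o(L(\bar T))$.

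The paper sidesteps the circularity by replacing your distance comparison with an \emph{energy} comparison. The a~priori estimate \eqref{eq:111} of Lemma~\ref{le:apriori}(ii) yields $\phi_\tau(u(0))-\phi_\tau(U^n_\tau)\le C$ for every $n\le\nn\tau T$, with a constant $C=C(\phi_*,\tau_*,F,T)$ depending only on the quadratic lower bound, the initial data, and $T$---crucially \emph{not} on $\bar T$ or $L$. One then selects $\bar T\ge T$ so that $\phi(u(\bar T))<\phi(u(0))-C$, which is possible precisely by \eqref{eq:89}. Since the terminal sample $u(\nn\tau{\bar T}\tau)$ has $\phi$-value below $\phi(u(0))-C$ while every $U^n_\tau$ with $n\le\nn\tau T$ has $\phi$-value above it, the discrete trajectory on $[0,T]$ cannot reach the endpoint, and the interior induction closes. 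This is the missing idea in your outline; once it is in place, the nonemptiness of $\DMM\tau{\cdot}{u(0),T}$, the inequality \eqref{eq: 57 2bis}, and the restriction--extension statement \eqref{eq:mm ext 2} follow exactly as you sketch.
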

\begin{proof}
The argument of the proof is quite similar to the one 
of Lemma \ref{lem:basic estimate}: the only difference is that 
we cannot find a compact set containing the range of 
the whole discrete
solutions. 

Let us set $F:=\phi(u(0))\lor |u(0)|^2$ and let
$C=C(\phi_*,\tau_*,F,T)$ the constant provided by Lemma \ref{le:apriori}(ii).
By \eqref{eq:89} we can select a time $\bar T\ge T$ such that 
\begin{equation}
  \label{eq:82}
  \phi(u(\bar T))<\phi(u(0))-C
\end{equation}
and we set $\cK:=u([0,\bar T+1])$, 
$L:=1\lor \max_\cK|\nabla\phi|$,
$\bar N=\nn\tau{\bar T}$,
$\delta=\eps/2$ and $\normalcolor \bar \tau\in (0,1\land\tau_*/16\land\tau_\cK)$ 
sufficiently small so that 
\eqref{eq:88} holds.

Since $\cU(\tau,\bar T)\subset \cK$, 
the same calculations of \eqref{eq:91} and \eqref{eq:90}
show that for every $x\in \{u(k\tau): 0\le k <\bar N\}$ 
there exists $z\in \cU(\tau,{\bar T})$ satisfying \eqref{eq:19}.

We can then apply Lemma \ref{lem: choice of lambda}
and the same induction argument 
of the previous proof to prove that an integer $M\ge 1$ 
and a sequence
$\normalcolor (U^n_\tau)_{0\le n\le M}\in \MS\tau{\varphi_{\eps,\cU(\tau,\bar T)}}{u(0),M}$
exist such that 
{\normalcolor$U^n_\tau\in \{u(k\tau): 0\le k \le\bar N\}$}
and $U^M_\tau=u(\bar N\tau)$.
Since $\phi(U^M_\tau)=\phi(u(\bar N\tau))\le 
\phi(u(\bar T))< \phi(u(0))-C$ and \eqref{eq:111} yields
\begin{equation}
  \label{eq:114}
  \phi(U^n_\tau)\ge \phi(u(0))-C
  \quad
  \text{for every $1\le n\le \nn\tau T$},
\end{equation}
we deduce that $\nn\tau T<M$
so that
$\DMM\tau{\varphi_{\eps,\cU(\tau,\bar T)}}{u(0),T}$ is not empty. 

If now $U$ is any element of $\DMM\tau{\varphi_{\eps,\cU(\tau,\bar
    T)}}{u(0),T}$ corresponding to 
a sequence $(U^n_\tau)_{0\le n\le N}
\in \MS\tau{\varphi_{\eps,\cU(\tau,\bar T)}}{u(0),N}$,
$N=\nn\tau T$, {\normalcolor then Lemma \ref{lem: choice of lambda},} the same induction argument of the previous proof and \eqref{eq:114}
show that $U$ take values in $\cU(\tau,
\bar T)$ and \eqref{eq: 57 2bis} holds. {\normalcolor The same arguments show that \eqref{eq:mm ext 2} holds for every $0\le S \le T$.}
\end{proof}

We are now able to state the main result of this section. 
\begin{theorem}
  \label{thm:appmax}
  Every \minimal\ solution $u\in \minsolution\phi$ is strongly
  approximable in every compact interval, 
  according to Definition \ref{def:PAGF}. 

  If in addition $\H$ has finite dimension or \eqref{eq: liminf is zero} is satisfied, then $u$ is strongly approximable according to Definition \ref{def:AGF}. 
\end{theorem}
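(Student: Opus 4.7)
The plan is to build a single family of perturbations $\phi_\tau$ by diagonally combining the compact-set penalizations $\varphi_{\eps,\cU(\tau,T)}$ from Section \ref{subsec:penalization}, and then to identify the uniform-on-compacta limits of discrete solutions with $u$ itself through the minimality characterization of Theorem \ref{thm: max gf}(3). Concretely I would pick $\eps_k\downarrow 0$ and $T_k\uparrow+\infty$, apply Lemma \ref{lem:basic estimate2} with $(\eps_k,T_k)$ (or Lemma \ref{lem:basic estimate} via Remark \ref{rem:liminf} in the case \eqref{eq: liminf is zero}) to obtain $\bar T_k\ge T_k$ and $\bar\tau_k\in(0,1)$, arrange $\bar\tau_k\downarrow 0$ strictly, and set $\phi_\tau:=\varphi_{\eps_k,\cU(\tau,\bar T_k)}$ for $\tau\in(\bar\tau_{k+1},\bar\tau_k]$. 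Since the distance $\psi_{\cU(\tau,\bar T_k)}$ is $1$-Lipschitz, $\Lip[\phi_\tau-\phi]\le\eps_k\to 0$. Fixing $T>0$ and $k(\tau)$ large enough that $T_{k(\tau)}\ge T$, the relevant lemma makes $\DMM\tau{\phi_\tau}{u(0),T}$ nonempty, with every $U_\tau$ in it taking values in $\cU(\tau,\bar T_{k(\tau)})\subset\range u$ and satisfying $\phi(U_\tau(t))\le\phi(u(t))$ on $[0,T]$.

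The delicate step is to confine $U_\tau([0,T])$ to a $\tau$-independent relatively compact subset of $\H$. Because the penalty vanishes on $\range u$, $\phi_\tau\equiv\phi$ along the discrete solution, and Lemma \ref{le:apriori}(ii) provides a constant $C=C(\phi_*,\tau_*,F,T)$ with $\phi(u(0))-\phi(U_\tau(T))\le C$; the non-increase of $n\mapsto\phi(U^n_\tau)$ extends this to $\phi(U_\tau(t))\ge\phi(u(0))-C$ on $[0,T]$. Combined with $U_\tau(t)\in\range u$ and the properness of $\phi|_{\range u}$ onto its image from Lemma \ref{le:preliminary}(iii), this traps $U_\tau([0,T])$ in a relatively compact subset of $\range u$ — in the unbounded-energy case because the time-interval carrying energies above $\phi(u(0))-C$ is bounded, and in the bounded-energy case because the asymptotic limit $u_\infty$ (which must be stationary by continuity of $\nabla\phi$ and $\int_0^\infty|\nabla\phi(u)|^2<\infty$) closes up $\range u$ to a compact set. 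Lemma \ref{le:well-known}(iii) then produces, along any vanishing $\tau_n$ and any $U_n\in\DMM{\tau_n}{\phi_{\tau_n}}{u(0),T}$, a subsequence converging uniformly on $[0,T]$ to some $\tilde u\in\rmC^1([0,T];\H)$ with $\tilde u(0)=u(0)$ and $\range\tilde u\subset\overline{\range u}$.

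To close the argument I would extend $\tilde u$ to an element of $\truncatedsolution\phi$ (via Remark \ref{rem:TGF} when $\tilde u(T)\in\range u$, or by constant extension at $u_\infty$ otherwise) and apply Theorem \ref{thm: max gf}(3), which delivers the reverse inequality $\phi(\tilde u(t))\ge\phi(u(t))$ for every $t\ge 0$; passing $\phi(U_n(t))\le\phi(u(t))$ to the limit then gives the identity $\phi(\tilde u(t))=\phi(u(t))$ on $[0,T]$. Injectivity of $\phi$ on $\overline{\range u}$ (the Lemma \ref{le:preliminary}(iii) homeomorphism, together with $\phi(u_\infty)=\inf\phi\circ u\notin\phi(\range u)$ when $u_\infty$ is missing from the range) forces $\tilde u\equiv u$ on $[0,T]$; uniqueness of the subsequential limit upgrades this to $\dsfD T(u|_{[0,T]},\DMM\tau{\phi_\tau}{u(0),T})\to 0$, which is \eqref{eq:PAGF}. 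For Definition \ref{def:AGF}, the finite-dimensional case reduces to PAGF by Remark \ref{rem:approximability}, while under \eqref{eq: liminf is zero} one may use Lemma \ref{lem:basic estimate} throughout, so that $\DMM\tau{\phi_\tau}{u(0)}$ is directly nonempty and every minimizing sequence extends — so Remark \ref{rem:approximability} again upgrades PAGF to AGF.

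The main obstacle is the compactness step: the a priori containment $U_\tau([0,T])\subset u([0,\bar T_{k(\tau)}+1])$ involves a set that grows with $\tau\downarrow 0$, so Lemma \ref{le:well-known}(iii) cannot be invoked naively; identifying a $\tau$-independent compact container requires combining the energy trap from Lemma \ref{le:apriori} with the topological structure of $\range u$ as a proper image of an interval provided by Lemma \ref{le:preliminary}(iii). A secondary subtlety is the possible presence of a non-attained asymptotic limit $u_\infty$, which is why Theorem \ref{thm: max gf}(3) (ranges in $\overline{\range u}$) is more convenient here than Theorem \ref{thm: max gf}(4) (ranges in $\range u$).
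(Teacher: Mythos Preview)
Your overall architecture matches the paper's: the same diagonal construction of $\phi_\tau$ from the penalizations of Section \ref{subsec:penalization}, the same energy comparison $\phi(U_\tau(t))\le\phi(u(t))$ from Lemmas \ref{lem:basic estimate}/\ref{lem:basic estimate2}, and the same identification of subsequential limits with $u$ via minimality. The finite-dimensional and unbounded-energy cases go through as you describe (in finite dimension the bound $|U^n_\tau|^2\le C$ from Lemma \ref{le:apriori}(ii) already gives relative compactness, and when $\inf\phi\circ u=-\infty$ the energy floor $\phi(u_0)-C$ lies strictly above the infimum, so properness bites).

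The gap is in the infinite-dimensional bounded-energy case with $T_\star(u)=\infty$. Your compactness argument relies on the assertion that the ``asymptotic limit $u_\infty$'' exists and closes $\range u$ to a compact set, but $\int_0^\infty|\nabla\phi(u)|^2<\infty$ does \emph{not} imply that $u(t)$ converges: it gives only $\liminf_{t\to\infty}|\nabla\phi(u(t))|=0$, and gradient trajectories of $\rmC^1$ functions may spiral indefinitely toward a set of critical points without settling on a single limit (the classical ``Mexican hat'' phenomenon). When moreover the a priori constant satisfies $\phi(u_0)-C\le\inf_{\range u}\phi$, your energy floor is vacuous and you are left with $U_\tau([0,T])\subset\range u$, which need not be relatively compact in infinite dimension. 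So the invocation of Lemma \ref{le:well-known}(iii) is not justified there.

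The paper circumvents this by using the threshold $\phi(u(T+1))$ rather than $\phi(u_0)-C$: since $T+1<T_\star(u)=\infty$, one always has $\phi(u(T+1))>\inf_{\range u}\phi$, so $\{x\in\range u:\phi(x)\ge\phi(u(T+1))\}=u([0,T+1])$ is genuinely compact. The price is that one does not know a priori that $U_\tau$ stays above this threshold on all of $[0,T]$; the paper therefore introduces the auxiliary first-crossing time $S_\tau:=\inf\{t\in[0,T+1]:\phi(U_\tau(t))\le\phi(u(T+1))\}$, proves compactness and hence convergence to $u$ on every $[0,S]$ with $S<\liminf S_\tau$, and then uses the equi-H\"older estimate and the minimality of $u$ to show $\liminf S_\tau=T+1$. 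This two-step ``moving endpoint'' argument is the missing ingredient in your proposal.
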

\begin{proof}\normalcolor
  We pick a decreasing sequence $\eps_n\downarrow0$ 
  and an increasing sequence $T_n:=\eps_n^{-1}\uparrow+\infty$.
  
  If \eqref{eq:87} holds, we can apply Lemma \ref{lem:basic estimate}
  and we set $\bar\tau_n:=\bar \tau(\eps_n)$,
  $\bar T_n:=T(\eps_n)\ge T_n$.

  If \eqref{eq:89} holds, we set 
  $\bar\tau_n:=\bar\tau(\eps_n, T_n)>0$,
  $\bar T_n:=\bar T(T_n)\ge T_n$
  provided by Lemma
  \ref{lem:basic estimate2}.
  
  We can find a decreasing sequence $\sigma_n\downarrow0$ satisfying 
  $\sigma_n\le \min_{1\le m\le n}\bar \tau_m$ 
  and
  a family $\phi_{\tau}$ by choosing
  \begin{displaymath}
    \phi_{\tau}:=\varphi_{\eps_n,\cU(\tau, \bar T_n)}
    \quad\text{whenever
    }\sigma_{n+1}<\tau\le \sigma_n.
  \end{displaymath}
  By construction
  \begin{displaymath}
    \Lip[\phi_{\tau}-\phi]\le \eps_n
    \quad\text{if }\sigma_{n+1}<\tau\le \sigma_n,
  \end{displaymath}
  so that $\lim_{\tau\downarrow0}    \Lip[\phi_{\tau}-\phi]=0$. 
  
  We first consider the case \textbf{$T_\star(u) < +\infty$}. If $T_\star(u) < +\infty$, then the range $\range u$ is compact and \eqref{eq:87} holds. Lemma \ref{lem:basic estimate} shows that $\DMM\tau{\phi_\tau}{u(0)}$ is not empty for $\tau\in(0, \sigma_1)$. Moreover, if $U_\tau\in\DMM\tau{\phi_\tau}{u(0)}$ is any selection depending on $\tau\in(0, \sigma_1)$, we have $U_\tau([0, +\infty))\subset \range u$ and $\phi(U_\tau(t)) \le \phi(u(t\land T_n))$ for every $t\ge 0$, $\sigma_{n+1} < \tau \le \sigma_n$. By Lemma \ref{le:well-known}(iv), every decreasing vanishing
  sequence $k\mapsto\tau(k)$ 
  admits a further subsequence (still denoted by $\tau(k)$)
  such that $U_{\tau(k)}$ converges in the topology of compact convergence  
  to a limit $v\in \solution\phi$. It holds that $\phi(v(t)) \le \phi(u(t))$ for all $t\ge 0$, which implies $u(t) = v(t)$ for all $t\ge 0$ by Theorem \ref{thm: max gf}(4) since $u$ is minimal and $\range v = \range u$. As the limit is unique, we obtain
  \begin{displaymath}
  \lim_{\tau\downarrow0}\dsfD \infty(u,
  \DMM\tau{\phi_{\tau}}{u(0)})=0,
  \end{displaymath}
 showing that $u$ is strongly approximable according to Definition \ref{def:AGF}. For every $T > 0$, $\tau\in(0, \sigma_1)$, it holds that $\DMM\tau{\phi_{\tau}}{u(0), T} = \{U|_{[0, T]} \ | \ U\in\DMM\tau{\phi_{\tau}}{u(0)}\}$; hence, by Remark \ref{rem:approximability}, $u$ is also strongly approximable in every compact interval.
  
 Now, we consider the case $T_\star(u) = +\infty$. 
  Let us fix $T>0$ and take $\bar n=\min\{n\in \N:T_n\ge T+1\}$.
  Lemma \ref{lem:basic estimate} and
  \ref{lem:basic estimate2} show
  that $\DMM\tau{\phi_\tau}{u(0),T+1}$ 
  is not empty whenever $\tau\le \sigma_{\bar n}$.
  Moreover, if $U_\tau\in \DMM\tau{\phi_\tau}{u(0),T+1}$
  is any selection depending on $\tau\in(0, \sigma_{\bar n})$,  
  we have $\phi(U_\tau(t))\le \phi(u(t))$ for every $t\in [0,T+1]$. According to Lemma \ref{le:apriori}(ii) and to (\ref{eq:104}) and (\ref{eq:105}), there exist $\tau_\star\in(0, \sigma_{\bar n})$ and a constant $C > 0$ independent of $\tau$ such that
  \begin{equation}\label{eq:2.28+2.29}
  |U_\tau(t) - U_\tau(s)| \le 2C\sqrt{\tau} + C|t-s|^{1/2} \quad \text{for every } s,t \in [0, T+1], \ \tau\in(0, \tau_\star). 
  \end{equation}   
  We define $S_\tau:= \inf\{t\in[0, T+1] \ | \ \phi(U_\tau(t)) \le \phi(u(T+1))\}$ for $\tau\in (0, \sigma_{\bar n})$ and $\tilde{S}:= \liminf_{\tau\downarrow 0} S_\tau$. The varying times $S_\tau$ serve as auxiliary final times in order to prove convergence of $U_\tau$. We set $\gamma_\tau:= (T+1 - S_\tau) \land \tau$. As the piecewise constant functions $U_\tau$ are left-continuous by definition and $\phi(U_\tau(T+1)) \le \phi(u(T+1))$, it holds that $S_\tau < T+1$ (thus $\gamma_\tau > 0$) 
  and $\phi(U_\tau(S_\tau + \gamma_\tau)) \le \phi(u(T+1))$. The plan is as follows. We show that $\tilde{S} > 0$, we prove that $U_\tau$ converges to $u$ uniformly in $[0, S]$ for every $0 < S < \tilde{S}$, and we conclude by proving that $\tilde{S} = T+1$. 
  
   There exists a vanishing sequence $l\mapsto\tau(l)$ such that $\lim_{l\uparrow \infty}S_{\tau(l)} = \tilde{S}$. A contradiction argument shows that $\tilde{S} > 0$. Suppose that $\tilde{S} = 0$; then $U_{\tau(l)}(S_{\tau(l)} + \gamma_{\tau(l)})$ converges to $u(0)$ by \eqref{eq:2.28+2.29} and $\phi(u(0)) = \lim_{l\uparrow\infty}\phi(U_{\tau(l)}(S_{\tau(l)} + \gamma_{\tau(l)})) \le \phi(u(T+1))$ in contradiction to $\phi(u(T+1)) < \phi(u(0))$ by the minimality of $u$ and Theorem \ref{thm: max gf}(5). Hence, $\tilde{S} > 0$. For every $0 < S < \tilde{S}$ and sufficiently small $\tau$, it holds that $U_\tau([0, S]) \subset u([0, T+1])$ so that  
  by Lemma \ref{le:well-known}(iii), every decreasing vanishing
  sequence $k\mapsto\tau(k)$ 
  admits a further subsequence (still denoted by $\tau(k)$)
  such that 
  $U_{\tau(k)}$ converges uniformly in $[0, S]$ 
  to a limit $v\in \rmC^1([0, S], \H)$ solving (\ref{eq:1}) in $[0, S]$. 
Moreover, since we have $v([0, S]) \subset \range u$ 
and $\phi(v(t)) \le \phi(u(t))$ for all $t\in[0, S]$,
we deduce that
$u(t) = v(t)$ for all 
$t\in[0, S]$ by the \minimality\ of $u$, Remark \ref{rem:TGF} and Theorem \ref{thm: max gf}(4).
Since the limit is unique, 
we can now infer that $\lim_{\tau \downarrow 0}\dsfd S (U_\tau, u) = 0$ for every $S < \tilde{S}$. Using \eqref{eq:2.28+2.29}, we obtain
\begin{eqnarray*}
\limsup_{l\uparrow\infty}{|U_{\tau(l)}(S_{\tau(l)}) - u(\tilde{S})|} &\le& \limsup_{l\uparrow\infty}{\left(|U_{\tau(l)}(S_{\tau(l)}) - U_{\tau(l)}(S)| + |U_{\tau(l)}(S) - u(\tilde{S})|\right)} \\
&\le& \limsup_{l\uparrow\infty}{\left(2C\sqrt{\tau(l)} + C|S_{\tau(l)} - S|^{1/2} + |U_{\tau(l)}(S) - u(\tilde{S})|\right)} \\
&\le& C|\tilde{S} - S|^{1/2} + |u(S) - u(\tilde{S})|
\end{eqnarray*}
for every $S < \tilde{S}$ and therefore $u(\tilde{S}) = \lim_{l\uparrow \infty} U_{\tau(l)}(S_{\tau(l)}) = \lim_{l\uparrow \infty} U_{\tau(l)}(S_{\tau(l)} +\gamma_{\tau(l)})$. It follows that $\phi(u(\tilde{S}))  = \lim_{l\uparrow\infty}\phi(U_{\tau(l)}(S_{\tau(l)} + \gamma_{\tau(l)})) \le \phi(u(T+1)))$ which implies $u(\tilde{S}) = u(T+1)$ as $\tilde{S} \le T+1$. Since the minimal solution $u$ is injective for $T_\star(u) = +\infty$ by Theorem \ref{thm: max gf}(5), it follows that $\tilde{S} = T+1 = \lim_{\tau\downarrow 0} S_\tau$. So we obtain 
 \begin{equation}\label{eq: sonne}
  \lim_{\tau\downarrow0}\dsfD T(u|_{[0,T]},
  \DMM\tau{\phi_{\tau}}{u(0),T})=0
  \end{equation}
by the preceding argument and the fact that $\DMM\tau{\phi_\tau}{u(0),T} = \{U|_{[0, T]} \ | \ U\in\DMM\tau{\phi_\tau}{u(0),T+1}\}$ for $\tau\in(0, \sigma_{\bar n})$. 
This shows that $u$ is strongly approximable in every compact interval. 
  
    
If $\H$ has finite dimension, then 
Remark \ref{rem:approximability} 
shows that 
$u$ is also strongly approximable. 
  
If \eqref{eq:87} holds, 
then Lemma \ref{lem:basic estimate} shows
that $\DMM\tau{\phi_\tau}{u(0)}$ 
is not empty for $\tau\in(0, \sigma_1)$; 
hence, according to Remark \ref{rem:approximability}, $u$ is also strongly approximable. The same can be shown if (\ref{eq: liminf is zero}) holds, see Remark \ref{rem:liminf}.  
\end{proof}

The next step in the proof of De Giorgi's conjecture is to show that
we can approximate any gradient flow curve by a sequence of \minimal\ 
gradient flows for slightly (in the Lipschitz norm) modified energies,
and then to combine that convergence result and Theorem
\ref{thm:appmax} by Lemma \ref{le:appsol}. 
This will be first considered in the one-dimensional setting.

\section{The one dimensional setting}\label{sec: one dimensional setting}

In this section we want to study the one-dimensional case $\H=\R$. Just for this section, 
we will call $E:=-\phi$ and
we consider a 
continuously
differentiable function 
$E: \R \rightarrow \R$ with derivative $f:=E'$.
\begin{proposition}\label{prop: one dimensional setting}
Let 
$u\in \solution{-E}$ be an eventually \minimal\ solution (see Definition
\ref{def:emaxsolution}), i.e. 
\begin{equation}
  \label{eq:technical}
  \text{there exists
    $T>0$ with
    $u'(T)\neq 0$ and $t\mapsto u(t+T)$ is \minimal.}
\end{equation}
\normalcolor Then 
there exist a sequence of energies $E_\eps\in \rmC^1(\R)$ and a sequence of curves
$u_\eps\in \minsolution{-E_\eps}$ with $u_\eps(0)=u(0)$ such that 
\begin{equation}
  \label{eq:59}
   E_\eps'=E'\quad\text{in }\R\setminus u([0,T]),\quad
  \lim_{\eps\downarrow0}\Lip[E_\eps-E]=0,\quad
  \lim_{\eps\downarrow0}\dsfd\infty(u,u_\eps)=0.
\end{equation}
\end{proposition}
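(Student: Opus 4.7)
In one dimension, the scalar ODE $u'=f(u)$ with $f:=E'\in \rmC^0$ determines the sign of $u'(t)$ as a function of $u(t)$ alone, so $u$ cannot reverse direction and must be monotonic on $[0,T]$. We may assume $u$ is non-decreasing; then $I:=u([0,T])=[a,b]$ with $a=u(0)$, $b=u(T)$, $f\ge 0$ on $I$, and $f(b)=u'(T)\ne 0$. The central object is the generalized inverse
\begin{equation*}
\tau:[a,b]\to[0,T], \qquad \tau(x):=\inf\{s\in[0,T]:u(s)=x\},
\end{equation*}
which is non-decreasing with $\tau(a)=0$, $\tau(b)=T$, and whose Stieltjes measure admits the decomposition $\rmd\tau=\tfrac 1 f\mathbf 1_R\,\rmd x+\mu_s$ where $R:=\{f>0\}\cap I$ and $\mu_s$ is a finite positive singular measure on $I\setminus R$. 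Atoms of $\mu_s$ correspond to critical values at which $u$ pauses, while $\mu_s(I)$ is the total pause time of $u$ on $[0,T]$.

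For each $\eps>0$, the plan is to construct a continuous $f_\eps:\R\to\R$ with $f_\eps=f$ off $I$, $f_\eps>0$ on $(a,b)$, and $\|f_\eps-f\|_\infty\to 0$. To this end, replace $\mu_s$ by an absolutely continuous approximation with density $\gamma_s^\eps\ge 1/\eps$ supported on a $\delta_\eps$-neighborhood of $\operatorname{supp}\mu_s$ (with $\delta_\eps\downarrow 0$) and total mass converging to $\mu_s(I)$; on $R$ keep the density $1/f$, and interpolate continuously at the boundary. Set $\tau_\eps(x):=\int_a^x(\tfrac 1 f\mathbf 1_R+\gamma_s^\eps)\,\rmd y$ and $f_\eps:=1/\tau_\eps'$ on $[a,b]$, $f_\eps:=f$ outside. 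Then $f_\eps\in \rmC^0(\R)$, $f_\eps>0$ on $(a,b)$, $|f_\eps|\le\eps$ on the support of $\gamma_s^\eps$, and since $f$ vanishes and is continuous on $\operatorname{supp}\mu_s$ it too is small there. Let $E_\eps$ be the antiderivative of $f_\eps$ with $E_\eps(a)=E(a)$: then $E_\eps\in \rmC^1(\R)$, $E_\eps'=E'$ off $I$, and $\Lip[E_\eps-E]=\|f_\eps-f\|_\infty\to 0$.

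Define $u_\eps$ on $[0,T]$ by $u_\eps(t):=\tau_\eps^{-1}(t)$, and for $t\ge T$ by $u_\eps(t):=u(t)$. The junction at $t=T$ is $\rmC^1$ since $u_\eps'(T^-)=f_\eps(b)=f(b)=u'(T)$; on $(T,\infty)$ the dynamics of $-E_\eps$ coincide with those of $-E$ past $b$, and $u|_{[T,\infty)}$ is the unique minimal continuation from $b$ by Theorem~\ref{thm: max gf}(1). Strict positivity of $f_\eps$ on $(a,b)$ means $u_\eps$ meets $\{f_\eps=0\}$ only at $t\in\{0,T\}$ on $[0,T]$; on $[T,\infty)$ it coincides with the minimal $u$, crossing $\{f=0\}\cap(\R\setminus I)$ on a Lebesgue-negligible set. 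Hence $u_\eps\in\minsolution{-E_\eps}$ by Theorem~\ref{thm: max gf}(5). Uniform convergence $u_\eps\to u$ on compact subintervals of $[0,\infty)$ is immediate on $[T,\infty)$; on $[0,T]$ it follows from $\tau_\eps\to\tau$ at continuity points of $\tau$ combined with continuity of $u$---a pause-interval of $u$ at a value $c$ becomes a steep monotone region of $\tau_\eps$ of horizontal width $\delta_\eps\to 0$, trapping $\tau_\eps^{-1}(t)$ within distance $\delta_\eps$ of $c$ throughout the pause.

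The main technical obstacle is the construction of $\gamma_s^\eps$ when $\mu_s$ carries a singular continuous component (the Cantor-type pathology of Appendix~\ref{sec:example}): one must mollify $\mu_s$ to an absolutely continuous density that is uniformly $\ge 1/\eps$ on a narrow $\delta_\eps$-neighborhood of $\operatorname{supp}\mu_s$ while keeping $f_\eps=1/\tau_\eps'$ uniformly close to $f$. The scale balance---width $\delta_\eps\to 0$ coupled to height $\sim 1/\eps$ and total mass $\to\mu_s(I)$---is the delicate heart of the argument.
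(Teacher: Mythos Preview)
Your strategy---pass to the pseudo-inverse, decompose its distributional derivative as $\frac{1}{f}\mathbf{1}_{R}\,\rmd x+\mu_s$, mollify the singular part, and re-invert---is exactly the paper's. The gap is in the specific formula you write for the perturbed vector field. You set $\tau_\eps':=\frac{1}{f}\mathbf{1}_R+\gamma_s^\eps$ and then $f_\eps:=1/\tau_\eps'$. At a point $x_0\in\{f=0\}\cap\operatorname{supp}\gamma_s^\eps$ this gives $f_\eps(x_0)=1/\gamma_s^\eps(x_0)\in(0,\eps]$, while along any sequence $x_n\to x_0$ with $x_n\in R$ one has $1/f(x_n)\to+\infty$ and hence $f_\eps(x_n)=f(x_n)/(1+\gamma_s^\eps(x_n)f(x_n))\to 0$. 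Thus $f_\eps$ is discontinuous and $E_\eps\notin\rmC^1(\R)$; the claim ``$f_\eps>0$ on $(a,b)$'' cannot be rescued by ``interpolating continuously at the boundary,'' because the bad points are interior to the support of $\gamma_s^\eps$, not at its edge.

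The paper avoids this by writing the perturbed field in the manifestly continuous form
\[
E_\eps'(x)=\frac{f(x)}{1+m_\eps(x)\,f(x)},\qquad m_\eps:=\mu\ast\kappa_\eps,
\]
with $\kappa_\eps$ a one-sided mollifier supported in $[0,\eps]$. This coincides with $1/\sft_\eps'$ wherever $f\neq0$ but extends continuously by $0$ across $\{f=0\}$. Two consequences of this reformulation change your argument: (i) the zero set of $E_\eps'$ is exactly $\{f=0\}$, so minimality of $u_\eps$ comes not from ``$f_\eps>0$ on $(a,b)$'' but from the fact that $\sft_\eps$, being absolutely continuous, maps the Lebesgue-null set $\{f=0\}$ to a null set of times; (ii) the uniform convergence $\|E_\eps'-E'\|_\infty\to0$ follows immediately from $0\le E_\eps'\le f$ together with $E_\eps'=f$ outside an $\eps$-neighborhood of $\operatorname{supp}\mu\subset\{f=0\}$, where $f$ is uniformly small---no ``delicate scale balance'' between $\delta_\eps$ and $1/\eps$ is needed. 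Finally, the exact junction $\tau_\eps(b)=T$ (needed to glue $u_\eps$ to $u$ at $t=T$) is obtained for free from the one-sidedness of $\kappa_\eps$ and the fact that $f(b)=u'(T)\neq0$, so that $\operatorname{supp} m_\eps\subset[a,b]$ and $\int_a^b m_\eps=\mu([a,b])$ exactly for $\eps$ small; your ``total mass converging to $\mu_s(I)$'' would leave a time mismatch.
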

\begin{proof}
In order to simplify the notation, we may assume w.l.o.g. that
$u(0)=0$. We notice that $u$ is a monotone function. This can be shown by contradiction: suppose that $u$ is not monotone and choose $a, b \in (0, \infty)$ with $u'(a) > 0$ and $u'(b) < 0$, w.l.o.g. $a < b$. Then there exists $\gamma > 0$ such that $u$ is strictly increasing on $[a, a+\gamma]$ and strictly decreasing on $[b-\gamma, b]$. It holds that $u(a) < u(b)$: otherwise there would be $s\in (a+\gamma, b]$ with $u(s) = u(a)$ which, by (\ref{eq:4}), would imply $u(t) = u(a)$ for all $t\in [a, s]$ contradicting the strict monotonicity of $u$ in $[a, a+\gamma]$. A similar argument yields $u(b) < u(a)$, a contradiction. Hence, $u$ is either increasing or decreasing. If Proposition \ref{prop: one dimensional setting} holds for increasing solutions, then, by obvious reflection arguments, it also holds for decreasing solutions, and thus for all solutions $u$.     
So we may assume that $u$ is an increasing function 
whose range $\range u$ is an interval of the form $[0,R)$, $R\in
(0,\infty]$ or $[0,R]$, $R\in (0,\infty)$. 
We define the left-continuous pseudo-inverse map
$\sft:[0,R)\to [0,T_\star(u))$ 
\begin{equation}
\label{eq:69}
\sft(x) := \min\{t\ge0: \ u(t) = x\},\quad
\text{satisfying}\quad
u(\sft(x))=x\quad\text{for every }x\in [0,R).
\end{equation}
The map $\sft$ is an increasing function, in particular it is a
function of bounded variation in any compact interval of $[0,R)$; 
\eqref{eq:69} yields that the set $D$ of points in $[0,R)$ where $\sft$ is
differentiable
coincides with the set $\{x\in [0,R):E'(x)>0\}$ and
$u'(\sft(x))\sft'(x)=1$ for every $x\in D$.
Lebesgue differentiation theorem shows that $D$ 
has full measure in $[0,R)$.
Since $u'=E'(u)$ we deduce 
that 
$$\sft'(x) = \frac{1}{u'(\sft(x))} = \frac{1}{E'(x)} =
\frac{1}{f(x)}\quad\text{for every }x\in D,$$
and the property
\begin{equation}
  \label{eq:70}
  \int_{[0,x]\cap D} \frac{1}{f(y)}\,\mathrm d y\le
  \sft(x)<\infty\quad\text{for every }x<R. 
\end{equation}
\eqref{eq:technical}
yields
\begin{equation}
  \mathscr L^1\big(\{t\in (T,T_\star(u)): u(t)\not\in
  D\}\big)=0,
\label{eq:44}
\end{equation}
so that $\sft$ is locally absolutely continuous in the
interval $[u(T),R)$.
Since the distributional derivative of $\sft$ is a Radon measure on
$[0,R)$,
there exists a nonnegative finite Borel measure $\mu$ supported on $[0,u(T)]$ such that
\begin{displaymath}
  \sft(x)=\int_{[0,x]\cap D} \frac1{f(r)}\,\mathrm d
  r+\mu([0,x))\quad
  \text{for every }x\in [0,R).
\end{displaymath}
Notice that $\mu([0,R))\le T$.
We can approximate $\mu$ by convolution 
(we will still denote by $\mu$ its trivial extension to $0$ outside the
interval $[0,R)$)
$$m_\eps(x):=\mu\ast
\kappa_\eps(x)=\frac 1\eps\int_{-\infty}^{+\infty} \kappa((x-y)/\eps)\,\rmd\mu(y)$$
where $\kappa$ is a shifted standard $\mathrm C^\infty_{\rm c}$ 
mollifier (see e.g. [\cite{AmbrosioFuscoPallara00}, p.41]) with support in $[0,1]$ and we define
\begin{displaymath}
  \sft_\eps(x):=\int_{0}^x \frac 1{f(r)}+m_\eps(r) \,\mathrm d r = \int_{0}^x \frac {1+m_\eps f}{f}(r) \,\mathrm d r  
\end{displaymath} 
for $x\in [0, R)$. 

We denote by $\mathrm J_\sft\subset[0, u(T)]$ the at most countable 
set of discontinuity points of $\sft$, which coincides with the set of
atomic points of $\mu$ (i.e.~$\{x\in [0,u(T)]:\mu\{x\}>0\}$).
Since $m_\eps\mathscr L^1$ converge to $\mu$ as $\eps\downarrow0$ in
the weak topology of finite positive measures, we obtain 
\begin{equation*}
\lim_{\eps\to 0}\int_{0}^{x}{m_\eps(r) \ \mathrm d r} = \mu([0, x))
\quad
\text{for every }x\in [0,R)\setminus \mathrm J_\sft,
\end{equation*}
see e.g. Proposition 1.62(b) and Theorem 2.2 in
\cite{AmbrosioFuscoPallara00}. 
We used the fact that the support of $m_\eps$ is contained in $[0, u(T)+ \eps]$. The convergence 
\begin{equation*}
\sft_\eps(x) \to \sft(x) \text{ as } \epsilon\to0  
\end{equation*}
for all $x\in[0, R)\setminus J_\sft$ directly follows. 
{\normalcolor Moreover, there exists $\bar{\eps} > 0$ such that $(u(T)-\bar{\eps}, u(T)] \subset D$; hence for $\eps\in(0, \bar{\eps})$, the support of $m_\eps$ is contained in $[0, u(T)]$ and
\begin{equation}
  \label{eq:115}
  \text{$\sft_\eps(x)=\sft(x)$ for every $x\ge u(T)$,}
\end{equation}
since $\int_0^{u(T)}m_\eps(r)\,\rmd r=\mu([0,u(T)])$.}
Let us now 
consider the map $\sft_\eps$ {\normalcolor for $\eps\in (0, \bar{\eps})$} fixed. It is locally absolutely continuous, strictly increasing and differentiable for $\mathcal{L}^1$-a.e. $x\geq 0$ with 
\begin{equation*}
\sft_\eps'(x) = \frac{1+m_\eps(x)f(x)}{f(x)} > 0,\quad
\lim_{x\uparrow R}\sft_\eps(x)=T_\star(u)=:T_\star.
\end{equation*}
Thus  the inverse map $u_\eps: [0, T_\star) \rightarrow [0,R)$ is
locally absolutely continuous with 
\begin{equation}
  \label{eq:116}
u_\eps'(t) = 
\frac{f(u_\eps(t))}{1+m_\eps(u_\eps(t))f(u_\eps(t))} \text{ for }
\mathcal{L}^1\text{-a.e. } t,
\quad
\normalcolor u_\eps(t)=u(t)\quad\text{for }t\ge T.
\end{equation}
Moreover, if $T_\star<\infty$, we see that $\lim_{t\uparrow
  T_\star}u_\eps(t)
=R=u(T_\star)$ and we can extend $u_\eps$ to the whole real line
by setting $u_\eps(t)=u(T_\star)$ for $t\ge T_\star$.

So we obtain that $u_\eps$ satisfies $u_\eps'(t)=E_\eps'(u_\eps(t))$ for all $t\in [0, \infty)$, for the energy $E_\eps: \mathbb{R} \rightarrow \mathbb{R}$ with
\begin{displaymath}
  E_\eps'=\frac f{1+m_\eps f}
\end{displaymath} 
(and initial value $u_\eps(0) = 0$). Moreover, since $\sft_\eps$ is absolutely continuous, the set 
\begin{displaymath}
\{t\in [0, T_\star): \ E_\eps'(u_\eps(t)) = 0\} \subset
\sft_\eps([0,R)\setminus D)
\end{displaymath} 
has Lebesgue measure $0$.

Since $E_\eps'$ is uniformly bounded in every bounded interval, 
the family $u_\eps$ is uniformly
Lipschitz in every bounded 
interval by \eqref{eq:116}; in order to prove that it converges to $u$ as
$\eps\downarrow0$ it is sufficient to characterize its limit $\tilde
u$ along a convergent subsequence $k\mapsto u_{\eps(k)}$,
$\eps(k)\downarrow0$ {\normalcolor(which exists by Ascoli-Arzel\`a theorem).} 


Since for all $x\in [0, R)\setminus \mathrm J_\sft$ we have
\begin{equation*}
u(\sft(x))=x = u_{\eps(k)}(\sft_{\eps(k)}(x)) \to \tilde{u}(\sft(x))
\quad\text{as }k\uparrow\infty;
\end{equation*}
since $\sft$ is left-continuous, we get 
\begin{equation*}
\tilde{u}(t) = u(t) \text{ for all } t\in \sft([0,R)).
\end{equation*}
Since $u$ is continuous and locally constant in the interior of 
$[0,T_\star)\setminus \sft([0,R))$ and $\tilde u$ is monotone, we
conclude that $u\equiv \tilde u$ on $[0,\infty)$.
%
Hence, $u_\eps$ is {\normalcolor converging uniformly} to $u$. 
 
In the last part of the proof, we show that $E_\eps'=:f_\eps$ is
converging uniformly to $E'=f$ 
on $\mathbb{R}$. We notice that the support of $\mu$ is a compact set
included in $\tilde N:=
[0,u(T)]\setminus D$, where $f$ vanishes. Hence, the support of $m_\eps$
is contained in the $\eps$-neighborhood {\normalcolor$\tilde N_\eps:=\{x\in [0, u(T)]:
\operatorname{dist}(x,\tilde N)\le \eps\}$ of $\tilde N$ for $\eps\in(0, \bar{\eps})$} so that 
$f=f_\eps$ in the complement of $\tilde N_\eps$. On the other hand,
since $0\le f_\eps\le f$ {\normalcolor on $[0, R)$}, we get
\begin{displaymath}
  \sup_{x\in \R}|f(x)-f_\eps(x)|\le 2\sup_{x\in \tilde N_\eps} f(x)\downarrow
  0\quad\text{as }\eps\downarrow 0,
\end{displaymath}
since $f$ is uniformly continuous in every compact subset of $\R$ and 
$f\equiv 0$ on $\tilde N$.
%
%
%
%
\end{proof}

By applying Lemma \ref{le:appsol}, Remark \ref{rem:emaxsolution} and Theorem \ref{thm:appmax}, we can now 
easily prove that in the one dimensional case any solution of \eqref{eq:1} is strongly
approximable.
In the next section, we will use Proposition 
\ref{prop:  one dimensional setting} as an inspiring guide to study the problem in an
arbitrary finite dimensional setting.

\section{Strongly approximable solutions}\label{sec: conclusion}

We consider an arbitrary non-constant solution $u\in
\solution\phi$ {\normalcolor for $\phi\in\rmC^1(\H)$}. 
Let $v\in \minsolution\phi$ be the unique \minimal\ solution 
with $u\succ v$ (see Theorem \ref{thm: max gf}) and 
$\range u\subset \range v\subset \overline{\range u}$, and set $T_\star:=T_\star(v)$.


We know that there exists an increasing $1$-Lipschitz map 
$\sfz: [0, +\infty)\to [0, +\infty)$ such that $u(t)=v(\sfz(t))$.
We also know that the restriction of $v$ to $[0,T_\star)$ is an
homeomorphism with  
\begin{equation}
R:= 
\begin{cases}
\range v &\text{ if } T_\star = +\infty,\\
\range v \setminus \{v(T_\star)\} &\text{ if } T_\star < +\infty 
\end{cases}
\end{equation}
whose inverse will be denoted by $\widetilde\sft:R\to
[0,T_\star)$. We will set
\begin{equation}
  \label{eq:54}
  \sfx(t):=\int_0^t |v'(s)|\,\rmd s,\quad
 \normalcolor L_\star:=\lim_{t\uparrow T_\star}\sfx(t)=
  \int_0^{+\infty}|v'(s)|\,\rmd s;
\end{equation}
notice that $\sfx\in \rmC^1([0,T_\star))$ with $\sfx'(t)>0$ a.e.~so
that it admits a locally absolutely continuous inverse that we will denote by
$\sft:[0,L_\star)\to [0,T_\star)$. 
 
The arc-length parametrization of $R$ is then given by 
\begin{equation}
\label{eq:61}
\widetilde\sfx: R \rightarrow [0, L_\star),\quad
\widetilde\sfx(y) := \sfx(\widetilde\sft(y))=\int_0^{\widetilde\sft(y)}{|v'(s)| \ \mathrm d
  s},\quad
y\in R.
\end{equation}  
Notice that $\tilde\sfx$ is an homeomorphism between $R$ and $[0,L_\star)$
which associates to every point $u(t)\in R$ the length of the curve
$u([0,t])$; in particular $\widetilde \sft(y)=\sft(\widetilde \sfx(y))$.

Its inverse $\sfy:=(\widetilde\sfx)^{-1}:[0,L_\star)\to R$
is the arc-length parametrization of the curve $v$, defined by
\begin{equation}
  \label{eq:64}
  \sfy(x)=v(\sft(x)).
\end{equation}

We can now consider the one-dimensional energy obtained by rectifying 
the graph of $v$
\begin{equation}
E: [0, L_\star) \rightarrow \R,\quad
E:=-\phi\circ \sfy,\quad
E(\sfx(t))=-\phi(v(t)),\label{eq:62}
\end{equation}
which is 
continuously differentiable with derivative
\begin{equation}
  \label{eq:66}
  E'(x)= - \langle \nabla\phi(\sfy(x)),v'(\sft(x))\rangle \sft'(x)=
  |\nabla\phi(\sfy(x))|^2\frac{1}{|v'(\sft(x))|}=
  |\nabla\phi(\sfy(x))|. 
\end{equation}
{\normalcolor If $T_\star < +\infty$, then $L_\star < +\infty$ and $E$ is Lipschitz, so there is a continuous extension of the energy to $[0, L_\star]$ which we still denote by $E$.}  
 
The next lemma shows that $u$ gives rise to a solution of
$\solution{-E}$ 
via a suitable rescaling.
\begin{lemma}
  The curve
\begin{equation}
  \label{eq:63}
  \sfu: [0, \infty) \rightarrow \R,\quad
  \sfu(t):=
  \begin{cases}
  \widetilde\sfx(u(t)) &\text{ for } t < T_\star(u),\\
  L_\star &\text { for } t\ge T_\star(u) \text{ if } T_\star(u) < +\infty
  \end{cases}
\end{equation}
belongs to $\solution{-E}$, i.e.
\begin{equation}
  \label{eq:67}
\sfu'(t) = E'(\sfu(t))\quad \text{for all } t\ge 0. 
\end{equation}
Moreover, if $u$ is eventually \minimal, then $\sfu$ is also eventually \minimal.
\end{lemma}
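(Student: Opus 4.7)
The plan is to exploit the identity $\sfu(t) = \sfx(\sfz(t))$, valid for $t\in [0, T_\star(u))$. This follows from $u(t) = v(\sfz(t))$ and the injectivity of $v$ on $[0, T_\star)$ (Theorem \ref{thm: max gf}(5)), which gives $\widetilde\sft(u(t)) = \sfz(t)$ and hence $\sfu(t) = \widetilde\sfx(u(t)) = \sfx(\sfz(t))$, provided $\sfz(t) \in [0, T_\star)$. This proviso holds for $t < T_\star(u)$: since $v$ assumes the stationary value $v_\star := v(T_\star)$ only at times $\ge T_\star$, and since $u(t) = v_\star$ precisely for $t \ge T_\star(u)$, continuity of $\sfz$ moreover forces $\sfz(T_\star(u)) = T_\star$ whenever $T_\star(u) < \infty$.

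With $\sfu = \sfx \circ \sfz$ in hand, I would differentiate almost everywhere: $\sfu'(t) = \sfx'(\sfz(t))\sfz'(t) = |v'(\sfz(t))|\sfz'(t)$, which coincides with $|u'(t)|$ by the chain rule $u'(t) = v'(\sfz(t))\sfz'(t)$. Combining the gradient flow equation for $u$ with the identity $E'(\widetilde\sfx(y)) = |\nabla\phi(y)|$ from \eqref{eq:66} yields
\begin{equation*}
\sfu'(t) = |u'(t)| = |\nabla\phi(u(t))| = E'(\sfu(t)) \quad \text{for a.e.\ } t \in [0, T_\star(u)).
\end{equation*}
Since the right-hand side is continuous in $t$, $\sfu$ is automatically of class $\mathrm C^1$ on $[0, T_\star(u))$ and the equation $\sfu'(t) = E'(\sfu(t))$ holds pointwise there.

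If $T_\star(u) < +\infty$, then $\range u = \range v$ is compact, so $T_\star < \infty$ and $L_\star < \infty$; continuity of $\sfx$ at $T_\star^-$ together with $\sfz(T_\star(u)) = T_\star$ gives $\sfu(t) \to L_\star$, making the constant extension $\sfu \equiv L_\star$ on $[T_\star(u), \infty)$ continuous. Moreover, $\sfy(x) \to v_\star \in \critical\phi$ as $x\uparrow L_\star$, so $E'(x) = |\nabla\phi(\sfy(x))| \to 0$, whence $E \in \mathrm C^1([0, L_\star])$ with $E'(L_\star) = 0$; this guarantees $\sfu'(t) = 0 = E'(\sfu(t))$ throughout $[T_\star(u), \infty)$. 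Finally, eventual minimality transfers: if $u'(T)\neq 0$ then $\sfu'(T) = |u'(T)| > 0$, and Theorem \ref{thm: max gf}(5) applied to the minimal curve $t \mapsto u(t+T)$ says that $\{t : \nabla\phi(u(t+T)) = 0\}$ is Lebesgue-negligible on $[0, T_\star(u(\cdot+T)))$; since $E'(\sfu(\tau)) = |\nabla\phi(u(\tau))|$ and $T_\star(\sfu(\cdot+T)) = T_\star(u(\cdot+T))$, the same theorem run in reverse shows that $t\mapsto \sfu(t+T)$ is minimal for $-E$. The main technical point to watch will be the gluing across $T_\star(u)$ in the extension case.
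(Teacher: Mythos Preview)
Your proposal is correct and follows essentially the same approach as the paper: both establish $\sfu=\sfx\circ\sfz$ and verify \eqref{eq:67} via the chain rule, then transfer eventual minimality through Theorem~\ref{thm: max gf}(5). Your computation $\sfu'=|v'(\sfz)|\,\sfz'=|u'|=|\nabla\phi(u)|=E'(\sfu)$ is slightly more direct than the paper's, which instead writes $\sfu'=E'(\sfu)\,\sfz'$ and then invokes the fact that $\sfz'=1$ whenever $E'(\sfu)\neq 0$.
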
 
\begin{proof}


 In order to check \eqref{eq:67} we first observe that $\sfu(t)=\sfx(\widetilde\sft(u(t)))=\sfx(\widetilde\sft(v(\sfz(t))))=
  \sfx(\sfz(t))$ for $t\in [0, T_\star(u))$ and  
\begin{displaymath}
   \sfx'(t)=|v'(t)|=|\nabla\phi(v(t))|=
   |\nabla\phi(\sfy(\sfx(t))|=
   E'(\sfx(t)) \text{ for } t\in[0, T_\star)
 \end{displaymath}
 since $v(t)=\sfy(\sfx(t))$. Therefore
 \begin{displaymath}
   \sfu'(t)=\sfx'(\sfz(t))\sfz'(t)=E'(\sfx(\sfz(t)))\sfz'(t)=
   E'(\sfu(t))\sfz'(t) 
 \end{displaymath}
 for a.e. $t\in[0, T_\star(u))$. On the other hand, we know that $\sfz'(t)=1$ whenever
 $|\nabla\phi(u(t))|\neq0$,
 i.e.~if $|\nabla\phi(\sfy(\sfu(t))|=E'(\sfu(t))\neq 0$. If $T_\star(u) < +\infty$, the limit $L_\star = \lim_{t\uparrow T_\star(u)} \sfx(\sfz(t))$ is finite and we can extend $E$ to $[0, L_\star]$ with $E'(L_\star) = \lim_{x\uparrow L_\star} E'(x) = 0$. Our calculations show that $\sfu\in\solution{-E}$.  

Finally, let us assume that 
$u$ is eventually \minimal; this is equivalent to say that for some
$T<T_\star(u)$ with $u'(T)\neq 0$ we have $\sfz'(t)\equiv 1$ in $(T,T_\star(u))$, 
so that $\sfu$ is also
eventually \minimal. 
\end{proof}
%
%
Let us assume that $u$ is eventually \minimal, according to Definition \ref{def:emaxsolution}; in particular $\sfu$ then satisfies (\ref{eq:technical}).
Arguing as in Proposition \ref{prop: one dimensional setting}, 
we associate to $E$ and $\sfu$ energies $E_\eps: [0, L_\star) \rightarrow
\R$ ({\normalcolor with continuous extension to $[0, L_\star]$ if $T_\star \lor L_\star < +\infty$}) and curves $\sfu_\eps: [0, \infty) \rightarrow \R$ satisfying
\begin{equation}\label{eq: derivative of Eeps}
  E_\eps'(x) = \frac{|\nabla\phi(\sfy(x))|}{1 + m_\eps(x)|\nabla\phi(\sfy(x))|} 
\end{equation} 
with $m_\eps$ chosen as in the proof of Proposition \ref{prop: one
  dimensional setting}, and 
\begin{equation}\label{eq: gf eq for Ueps}
\sfu_\eps'(t) = E_\eps'(\sfu_\eps(t)) \text{ for all }t\in [0, +\infty).
\end{equation}
The set
\begin{displaymath}
\{t\in[0, T_\star(u)): \ E_\eps'(\sfu_\eps(t)) = 0\}
\end{displaymath}
has Lebesgue measure $0$, and $\sfu_\eps$ is converging locally uniformly to
$\sfu$ as $\eps\to0$.
We observe that $E_\eps$ satisfies up to an additive constant 
\begin{equation}
  \label{eq:74}
  E_\eps(x)=
  \int_0^x
  \frac{|\nabla\phi(\sfy(r))|}{1+m_\eps(r)|\nabla\phi(\sfy(r))|}\,\rmd
  r
  =
    \int_0^{\sft(x)}
  \frac{|\nabla\phi(v(t))|}{1+m_\eps(\sfx(t))|\nabla\phi(v(t))|}|v'(t)|\,\rmd
  t.
\end{equation}
Now, we translate this one dimensional setting with the approximation 
by $E_\eps$ and $\sfu_\eps$ back to the initial situation with $\phi$ and $u$.

\begin{lemma}\label{lem: ueps is max gf for phieps}
Let us suppose that $u$ is eventually \minimal\ and that there exist 
$\phi_\eps\in \rmC^1(\H)$ satisfying 
\begin{equation}\label{eq: phieps on SigmaT}
  \phi_\eps(y) = -E_\eps(\tilde\sfx(y))
  =
  -\int_0^{\tilde\sft(y)}
  \frac{|\nabla\phi(v(t))|}{1+m_\eps(\sfx(t))|\nabla\phi(v(t))|}|v'(t)|\,\rmd
  t\quad\text{on } R,
\end{equation} 
and 
\begin{equation}\label{eq: gradient of phieps on SigmaT}
\nabla\phi_\eps(y) = \frac{\nabla\phi(y)}{1 + m_\eps(\widetilde
  \sfx(y)) |\nabla\phi(y)|} 
\text{ for all } y\in R. 
\end{equation}
Then the curve
\begin{equation}
 u_\eps: [0, +\infty)\rightarrow \H, \quad u_\eps(t):=
 \begin{cases}
 \sfy(\sfu_\eps(t)) &\text{ for } t < T_\star(u), \\
 {\normalcolor u(T_\star(u))} &\text{ for } t\ge T_\star(u) \text{ if } T_\star(u) < +\infty 
 \end{cases}
\end{equation}
is a \minimal\ gradient flow for $\phi_\eps$. Moreover, $u_\eps$ is converging locally uniformly to $u$ as $\eps\to0$. 
\end{lemma}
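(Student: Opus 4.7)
The plan breaks the lemma into three claims: (a) $u_\eps$ is of class $\rmC^1$ on $[0,\infty)$ and satisfies the gradient flow equation $u_\eps'=-\nabla\phi_\eps(u_\eps)$; (b) the set of times where $\nabla\phi_\eps(u_\eps(t))=0$ is Lebesgue-negligible in $[0,T_\star(u_\eps))$, which by Theorem \ref{thm: max gf}(5) yields minimality; (c) $u_\eps\to u$ locally uniformly.

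For (a) I would compute $u_\eps'$ on $[0,T_\star(u))$ by the chain rule from $u_\eps=\sfy\circ\sfu_\eps$. Differentiating the identity $\sfy(x)=v(\sft(x))$ and using $v'=-\nabla\phi(v)$ together with $\sft'(x)=1/|\nabla\phi(\sfy(x))|$ (valid wherever the denominator is nonzero, which by minimality of $v$ is a full-measure set) one obtains
\[
\sfy'(x)=-\frac{\nabla\phi(\sfy(x))}{|\nabla\phi(\sfy(x))|}.
\]
Multiplying by $\sfu_\eps'(t)=E_\eps'(\sfu_\eps(t))$ from \eqref{eq: gf eq for Ueps}, inserting the explicit formula \eqref{eq: derivative of Eeps} for $E_\eps'$, and using $\widetilde\sfx(u_\eps(t))=\sfu_\eps(t)$, the modulus terms cancel and
\[
u_\eps'(t)=-\frac{\nabla\phi(u_\eps(t))}{1+m_\eps(\widetilde\sfx(u_\eps(t)))\,|\nabla\phi(u_\eps(t))|}=-\nabla\phi_\eps(u_\eps(t)),
\]
the last equality being hypothesis \eqref{eq: gradient of phieps on SigmaT}. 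Both sides vanish at the (possibly exceptional) times where $\nabla\phi(u_\eps(t))=0$, so the identity extends there by continuity. If $T_\star(u)<\infty$, the constant extension $u_\eps\equiv u(T_\star(u))$ for $t\ge T_\star(u)$ still solves the equation: $u(T_\star(u))$ is a critical point of $\phi$, and by the formula \eqref{eq: gradient of phieps on SigmaT} together with continuity of $\nabla\phi_\eps$ on all of $\H$ it is also a critical point of $\phi_\eps$.

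For (b) I use \eqref{eq: gradient of phieps on SigmaT} to observe that $\nabla\phi_\eps(y)=0$ iff $\nabla\phi(y)=0$ for $y\in R$, and \eqref{eq: derivative of Eeps} to observe that $\nabla\phi(\sfy(x))=0$ iff $E_\eps'(x)=0$. Hence the exceptional set coincides with $\{t:E_\eps'(\sfu_\eps(t))=0\}$, which was already shown to be Lebesgue-negligible in the construction carried out in the proof of Proposition \ref{prop: one dimensional setting}. Theorem \ref{thm: max gf}(5) then gives $u_\eps\in\minsolution{\phi_\eps}$. For (c), the locally uniform convergence $\sfu_\eps\to\sfu$ from Proposition \ref{prop: one dimensional setting} combined with continuity of $\sfy$ (extended continuously to $L_\star$ when $L_\star<\infty$) propagates to $u_\eps=\sfy\circ\sfu_\eps\to\sfy\circ\sfu=u$ uniformly on compact intervals.

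The main obstacle I anticipate is the careful bookkeeping at the endpoint $T_\star(u)$ in the finite case: one must check that the piecewise definition of $u_\eps$ produces a $\rmC^1$ curve whose derivative vanishes continuously at the junction. This reduces to verifying that $\nabla\phi_\eps$ vanishes at every critical point of $\phi$ that lies in $\overline R$, which is forced by formula \eqref{eq: gradient of phieps on SigmaT} and the continuity assumed of $\phi_\eps$ on $\H$; and to verifying the compatibility of the continuous extensions of $\sfy$ at $L_\star$ and of $\sfu_\eps$ past $T_\star(u)$, both already built into the constructions of Section \ref{sec: one dimensional setting}.
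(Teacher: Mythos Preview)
Your proof is correct and follows essentially the same route as the paper: both compute $\sfy'(x)=-\nabla\phi(\sfy(x))/|\nabla\phi(\sfy(x))|$ and combine it with $\sfu_\eps'=E_\eps'(\sfu_\eps)$ via the chain rule to obtain $u_\eps'=-\nabla\phi_\eps(u_\eps)$, and both deduce the convergence of $u_\eps$ from that of $\sfu_\eps$. Your version is in fact more explicit than the paper's terse argument, since you spell out the minimality step via Theorem~\ref{thm: max gf}(5) (using that $\{t:E_\eps'(\sfu_\eps(t))=0\}$ is negligible, stated just before the lemma) and the $\rmC^1$ matching at $T_\star(u)$, both of which the paper leaves implicit.
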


\begin{proof}
We just observe that for a.e.~$x\in [0,L_\star)$
\begin{equation}
  \label{eq:71}
  \sfy'(x)=v'(\sft(x))\sft'(x)=-\nabla\phi(v(\sft(x)))\frac{1}{|v'(\sft(x))|}=
  -\frac{\nabla\phi(\sfy(x))}{|\nabla\phi(\sfy(x))|}
\end{equation}
so that 
\begin{align*}
  u_\eps'(t)&=\sfy'(\sfu_\eps(t))\sfu_\eps'(t)=
              -\frac{\nabla\phi(u_\eps(t))}{|\nabla\phi(u_\eps(t))|} 
              \frac{|\nabla\phi(u_\eps(t))|}{1+m_\eps(\sfu_\eps(t))|\nabla\phi(u_\eps(t))|}
              =-\nabla\phi_\eps(u_\eps(t)).
\end{align*}
The convergence of $u_\eps$ is a consequence of the convergence of $\sfu_\eps$.
\end{proof}

It remains to show that there indeed exist energies $\phi_\eps: \H\to
\R$ satisfying the assumptions of Lemma \ref{lem: ueps is max gf for
  phieps} and converging to $\phi$ in the Lipschitz
seminorm. 
Note that (\ref{eq: phieps on SigmaT}) which is not used in the proof of Lemma \ref{lem: ueps is max gf for phieps} should give an idea of how to construct $\phi_\eps$.

\begin{lemma}\label{lem: whitney extension}
Let us suppose that $\H$ has finite dimension and $u$ is an eventually
\minimal\ solution to \eqref{eq:1}. 
There exist {\normalcolor continuously differentiable 
functions} $\phi_\eps: \H \rightarrow \R$ such that (\ref{eq: phieps
  on SigmaT}) (up to an additive constant) and (\ref{eq: gradient of phieps on SigmaT}) is
satisfied and 
\begin{equation}
  \label{eq:75}
  \lim_{\eps\downarrow0}\sup_{\H}|\phi_\eps-\phi|+\Lip[\phi_\eps-\phi]=0.
\end{equation}
\end{lemma}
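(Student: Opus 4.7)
The plan is to apply the Whitney $C^1$-extension theorem to construct $\phi_\eps$ on $\H$, after prescribing its values and gradient on the range of the minimal solution $v$. Observe that if $\bar L_\eps \in [0,L_\star)$ is chosen so that the support of $m_\eps$ is contained in $[0,\bar L_\eps]$, then outside the compact curve segment $K_\eps := \sfy([0,\bar L_\eps])$ the prescriptions \eqref{eq: phieps on SigmaT} and \eqref{eq: gradient of phieps on SigmaT} reduce to $\phi_\eps = \phi + \mathrm{const}$ and $\nabla\phi_\eps = \nabla\phi$; hence the construction reduces to extending the perturbation $\psi_\eps := \phi_\eps - \phi$ from the compact set $K_\eps$ to $\H$ as a small $C^1$ function, then cutting off far from $K_\eps$.

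First I would define on $K_\eps$ the data for the extension, namely the scalar field
\[
\psi_\eps(y) := \int_0^{\tilde\sfx(y)} b_\eps(r)\,\rmd r,
\qquad
b_\eps(r) := \frac{m_\eps(r)\,|\nabla\phi(\sfy(r))|^2}{1+m_\eps(r)\,|\nabla\phi(\sfy(r))|},
\]
and the candidate gradient
\[
\Psi_\eps(y) := -\,b_\eps(\tilde\sfx(y))\,\frac{\nabla\phi(y)}{|\nabla\phi(y)|}
\quad\text{on }\{\nabla\phi\neq0\},\qquad \Psi_\eps(y):=0\ \text{otherwise}.
\]
Using the formula $\sfy'(r)=-\nabla\phi(\sfy(r))/|\nabla\phi(\sfy(r))|$ a.e. (see \eqref{eq:71}), one verifies that $\psi_\eps(\sfy(r_1))-\psi_\eps(\sfy(r_0))=\int_{r_0}^{r_1}\langle\Psi_\eps(\sfy(r)),\sfy'(r)\rangle\,\rmd r$. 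The factor $|\nabla\phi|^2$ in $b_\eps$ cancels the singularity of $\nabla\phi/|\nabla\phi|$ at critical points, so $\Psi_\eps$ extends continuously to all of $K_\eps$.

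Next I would establish the quantitative smallness needed for \eqref{eq:75}. The support of $\mu$ is contained in the critical set $\{r:\nabla\phi(\sfy(r))=0\}\cap[0,\sfx(T)]$, so $\mathrm{supp}(m_\eps)$ sits in its $\eps$-neighborhood; by uniform continuity of $\nabla\phi$ on the compact $K_\eps$, $|\nabla\phi(\sfy(r))|\le \omega(\eps)$ on $\mathrm{supp}(m_\eps)$ with $\omega(\eps)\downarrow 0$. Since $b_\eps\le|\nabla\phi\circ\sfy|$, this yields
\[
\sup_{K_\eps}|\Psi_\eps|\le \omega(\eps),\qquad \sup_{K_\eps}|\psi_\eps|\le \omega(\eps)\cdot\mu([0,\sfx(T)]),
\]
so the data decays uniformly to $0$. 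I would then invoke the Whitney $C^1$-extension theorem (available in finite dimensions) to produce $\tilde\psi_\eps\in\rmC^1(\H)$ with $\tilde\psi_\eps|_{K_\eps}=\psi_\eps$ and $\nabla\tilde\psi_\eps|_{K_\eps}=\Psi_\eps$, and finally multiply $\tilde\psi_\eps$ by a smooth cutoff $\chi_\eps$ equal to $1$ on a neighborhood of $K_\eps$ and vanishing outside a slightly larger neighborhood, setting $\phi_\eps:=\phi+\chi_\eps\tilde\psi_\eps$. On $R$ we have $\chi_\eps\equiv1$, so both \eqref{eq: phieps on SigmaT} and \eqref{eq: gradient of phieps on SigmaT} hold, and \eqref{eq:75} follows from the uniform smallness of $\tilde\psi_\eps$ and $\nabla\tilde\psi_\eps$ together with the bounded geometry of the cutoff.

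The main obstacle is the Whitney compatibility condition on $K_\eps$: one must show
\[
\sup\Big\{\tfrac{|\psi_\eps(y)-\psi_\eps(x)-\langle\Psi_\eps(x),y-x\rangle|}{|y-x|}:x,y\in K_\eps,\ 0<|y-x|<\delta\Big\}\xrightarrow[\delta\downarrow 0]{}0
\]
for each fixed $\eps$. Writing the numerator as $\int_{r_0}^{r_1}\langle\Psi_\eps(\sfy(r))-\Psi_\eps(\sfy(r_0)),\sfy'(r)\rangle\,\rmd r$, the uniform continuity of $\Psi_\eps\circ\sfy$ on the compact arc-length interval controls it by $|r_1-r_0|\,\omega_\eps^*(|r_1-r_0|)$. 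The delicate point is that although the injectivity of $\sfy$ (Theorem~\ref{thm: max gf}(5)) and compactness guarantee that $|y-x|\to 0$ implies $|r_1-r_0|\to 0$, the arc-length parametrization of the minimal flow $v$ may fail to be bi-Lipschitz across a corner at a critical point of $\phi$, so the ratio $|r_1-r_0|/|y-x|$ need not be bounded. One therefore has to exploit further cancellations in the integrand, using the tangential structure $\Psi_\eps=-b_\eps\hat n$ with $\hat n=-\sfy'$ a.e., and the fact that $b_\eps$ and $|\Psi_\eps|$ both vanish at least linearly in $|\nabla\phi|$ near critical points, to improve the naive bound into genuine $o(|y-x|)$ decay. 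This is the step where the finite dimensionality of $\H$ enters essentially (through the applicability of the Whitney extension theorem and the possibility to choose tubular cutoffs).
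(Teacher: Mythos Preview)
Your overall architecture (define the $C^1$ jet $(\psi_\eps,\Psi_\eps)$ on the curve, apply Whitney extension, multiply by a cutoff) is exactly the paper's, and your formulas for $\psi_\eps,\Psi_\eps$ coincide with the paper's $\delta_\eps,Q_\eps$ up to an additive constant. The genuine gap is in the step you yourself flag as the main obstacle: the Whitney compatibility at points where $\nabla\phi$ vanishes. Your proposed route---bound the Taylor remainder by $|r_1-r_0|\,\omega^*_\eps(|r_1-r_0|)$ and then look for ``tangential cancellations''---does not close, precisely for the reason you identify: $|r_1-r_0|/|y-x|$ is uncontrolled near a critical point. The paper's resolution is a different (and simpler) idea that bypasses arc-length entirely. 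Since $b_\eps(r)=\frac{m_\eps|\nabla\phi(\sfy(r))|^2}{1+m_\eps|\nabla\phi(\sfy(r))|}\le |\nabla\phi(\sfy(r))|=E'(r)$, one has
\[
|\psi_\eps(y)-\psi_\eps(x)|\le\Big|\int_{r_0}^{r_1}E'(r)\,\rmd r\Big|=|\phi(x)-\phi(y)|,
\qquad |\Psi_\eps(x)|\le |\nabla\phi(x)|,
\]
for $x,y$ on the curve. Now everything is expressed through $\phi$ in the ambient space: when $x_n,y_n\to w$ with $\nabla\phi(w)=0$, the $C^1$ smoothness of $\phi$ on $\H$ gives $|\phi(x_n)-\phi(y_n)|/|x_n-y_n|\to0$ and $|\nabla\phi(x_n)|\to0$, which is the required $o(|y-x|)$. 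At non-critical limit points the curve is locally a $C^1$ graph and your naive estimate already works. This dichotomy is the missing ingredient.

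Two further points. First, for \eqref{eq:75} you need not only $\sup|\psi_\eps|,\sup|\Psi_\eps|\to0$ but also the Whitney modulus $\sup_{x,y}W_\eps(x,y)\to0$ on the whole compact (not just pointwise for fixed $\eps$); this is what the quantitative Whitney bound (H\"ormander~(2.3.8), cited in the paper) converts into $\sup_\H|\nabla\tilde\psi_\eps|\to0$. The same comparison $b_\eps\le E'$ gives this uniformity. Second, your cutoff $\chi_\eps$ is underspecified: if you only prescribe the jet on $K_\eps=\sfy([0,\bar L_\eps])$ and cut off in a neighborhood of $K_\eps$, then on the portion of $R$ lying in the transition annulus the product $\chi_\eps\tilde\psi_\eps$ need not vanish, so \eqref{eq: phieps on SigmaT}--\eqref{eq: gradient of phieps on SigmaT} can fail there. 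The paper avoids this by prescribing the jet on a \emph{fixed} larger arc $K_1=u([0,T_1])\supsetneq K=u([0,T])$, normalizing so that $\psi_\eps\equiv0$ and $\Psi_\eps\equiv0$ on $K_1\setminus K$, and taking a single ($\eps$-independent) cutoff equal to $1$ on $K$ and supported in $\{\phi>\phi(u(T_1))\}$; then $R\setminus K_1$ lies where the cutoff vanishes identically, and the prescriptions hold on all of $R$.
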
 
\begin{proof}
Let us fix a time $T<T_\star(u)$ such that $m_\eps(\widetilde \sfx(y)) = 0$ for all $y\in R\setminus u([0, T])$ and $\eps > 0$ and choose $T_1\in (T,T_\star(u))$ so that 
$\phi(u(T))>\phi(u(T_1))>\inf_{\range
  u}\phi$. We consider the compact sets $K:=u([0,T])$ and
$K_1:=u([0,T_1])$ and the open set $A:=\{w\in \H:\phi(w)>\phi(u(T_1))\}$
which contains $K$. We can 
find a smooth function $\psi:\H\to [0,1]$ such that 
\begin{equation}
  \label{eq:72}
  \psi(w)\equiv 1\quad\text{on }K,\quad 
  \psi(w)=0\quad \text{on }\H\setminus A,
\end{equation}
and 
\begin{equation}\label{eq: 72 ii} 
\nabla\psi \equiv 0 \quad\text{on } K, \quad \sup_\H |\nabla\psi| < +\infty. 
\end{equation}
The construction of $\psi$ is standard: 
there exists $\delta > 0$ such that the distance function $\operatorname{dist}(x, K)$ satisfies
\begin{displaymath}
\operatorname{dist}(x, K) \le 4\delta \quad \Rightarrow \quad x\in A. 
\end{displaymath}
The composition of $\operatorname{dist}(x, K)$ with $\eta(d):=\frac{1}{\delta}(\delta-(d-2\delta)_+)_+$ then yields a $\frac{1}{\delta}$-Lipschitz function taking value $1$ in a neighborhood of radius $2\delta$ around $K$ and vanishing if $\operatorname{dist}(x, K)\ge 3\delta$. Taking the convolution of $\eta\circ \operatorname{dist}(\cdot, K)$ with a smooth kernel with support in $\{|x|\le \delta\}$, we obtain a suitable function $\psi$. 
 
Let us define $\delta_\eps: K_1 \rightarrow \R, \
\delta_\eps(w):= 
-E_\eps(\widetilde\sfx(w)) - \phi(w)$. Applying Whitney's Extension Theorem [see
e.g. \cite{Hoermander98}, Theorem 2.3.6], we aim to extend
$\delta_\eps$ to a $\rmC^1$ function in $\H$ 
with gradient $Q_\eps: \H\to \H$ satisfying
$Q_\eps(w) = F_\eps(w) - F(w)$ on $K_1$, in which
\begin{displaymath}
F_\eps(w) := \frac{\nabla\phi(w)}{1 + m_\eps(\tilde \sfx(w)) |\nabla\phi(w)|}, \quad F(w):= \nabla\phi(w).
\end{displaymath} 
For that purpose, since $\delta_\eps$ and $Q_\eps$ are continuous and
$\phi\in \rmC^1(\H)$, we only need to check 
if for $w_n, {\bar w}_n\in K_1$ with $w_n\neq \bar {w}_n, \
\lim_{n\to0} |\bar {w}_n - w_n| = 0$, 
it holds that 
\begin{equation}\label{eq: deltaeps is C1}
  \lim_{n\to\infty}\frac{-E_\eps(\widetilde\sfx(\bar {w}_n)) + E_\eps(\widetilde \sfx(w_n)) - \langle F_\eps(w_n), \bar {w}_n - w_n \rangle}{|\bar {w}_n - w_n|}= 0. 
\end{equation}
Up to extracting a subsequence, it is not restrictive to assume that
$\bar  w_n$ and $w_n$ converge to a common limit point $w$. 
By using the \minimal\ flow $v$ we can also find points
$t_n=\widetilde\sft(w_n),\bar  t_n=
\widetilde\sft(\bar w_n)$
converging to some $t$
such that $\bar  w_n=v(\bar t_n)$, $w_n=v(t_n)$, $w=v(t)$.
Notice that 
\begin{align*}
  E_\eps(\tilde\sfx(w_n))-
  E_\eps(\tilde\sfx(\bar w_n))
  &=
    \int_{\bar t_n}^{t_n}
   \frac{|\nabla\phi(v(r))|}{1+m_\eps(\sfx(r))|\nabla\phi(v(r))|}|v'(r)|\,\rmd
    r\\
  \langle F_\eps(w_n), \bar {w}_n - w_n \rangle
  &=
    \frac{\langle \nabla\phi(v(t_n)),v(\bar t_n)- v(t_n)\rangle}
    {1+m_\eps(\sfx(t_n))|\nabla\phi(v(t_n))|}
\end{align*}
%
If $\nabla\phi(w) = 0$, then (\ref{eq: deltaeps is C1}) directly follows from the fact that 
\begin{equation*}
  \big|E_\eps(\tilde\sfx(w_n))-
  E_\eps(\tilde\sfx(\bar w_n))\big|\le \Big|\int_{\bar t_n}^{t_n}
  |\nabla\phi(v(r))||v'(r)|\,\rmd
r\Big|
= |\phi(\bar w_n) - \phi(w_n)|,
\end{equation*}  
so that 
\begin{displaymath}
  \limsup_{n\to\infty}\frac{\big|E_\eps(\tilde\sfx(w_n))-
    E_\eps(\tilde\sfx(\bar w_n))\big|}{|w_n-\bar w_n|}\le 
  \limsup_{n\to\infty}\frac{|\phi(\bar w_n) - \phi(w_n)|}{|w_n-\bar w_n|}=0,
\end{displaymath}
and
\begin{displaymath}
  \limsup_{n\to\infty}\frac{\big|\langle F_\eps(w_n), \bar {w}_n - w_n
    \rangle|}{|\bar {w}_n - w_n|}
  \le \limsup_{n\to\infty} |\nabla\phi(w_n)|=0.
\end{displaymath}
If $|\nabla\phi(w)| \neq 0$, then 
\begin{displaymath}
  \lim_{n\to\infty}\frac{v(t_n)-v(\bar t_n)}{t_n-\bar
    t_n}=v'(t)=-\nabla\phi(v(t))=
  -\nabla\phi(w)\neq 0,
\end{displaymath}
and
\begin{align*}
  \lim_{n\to\infty}&\frac{-E_\eps(\widetilde\sfx(\bar {w}_n)) +
  E_\eps(\widetilde \sfx(w_n)) - \langle F_\eps(w_n), \bar {w}_n - w_n
  \rangle}{t_n-\bar t_n}
                     \\&=
  \frac{|\nabla\phi(v(t))|}{1+m_\eps(\sfx(t))|\nabla\phi(v(t))|}|v'(t)|+
  \frac{\langle \nabla\phi(v(t)),v'(t)\rangle }{1+m_\eps(\sfx(t))|\nabla\phi(v(t))|}=0.
\end{align*}
%
So $\delta_\eps: K_1\rightarrow \R$ can be extended to a continuously differentiable function $\delta_\eps: \H \rightarrow \R$ with gradient $\nabla\delta_\eps = Q_\eps$ on $K_1$. 
Moreover, there exists a constant $C$ only depending on $K_1$ such
that [see \cite{Hoermander98}, (2.3.8) in Theorem 2.3.6]
\begin{equation}\label{eq: 2.3.8}
\sup_{\H}|\delta_\eps| + \sup_{\H}|\nabla\delta_\eps| \leq
C\Big(\sup_{x, y\in K_1}W_\eps(x,y) + 
\sup_{x, y\in K_1}|Q_\eps(x) - Q_\eps(y)| + \sup_{K_1}|\delta_\eps| + \sup_{K_1}|Q_\eps|\Big),
\end{equation} 
where
\begin{equation*}
W_\eps(x, y):= \frac{|\delta_\eps(x) - \delta_\eps(y) - \langle
  Q_\eps(y), x- y \rangle|}{|x - y|} 
\text{ if } x\neq y, \quad W_\eps(x, x) = 0.
\end{equation*}
Since $E_\eps$ is determined up to an additive constant, we may assume
that $E_\eps(\sfu(T)) = E(\sfu(T))$ and thus that $\delta_\eps$ is
converging uniformly to $0$ on $K_1$ and $\delta_\eps\equiv 0$ on $K_1\setminus K$. Moreover, it is not
difficult to check that $Q_\eps$ is converging uniformly to $0$ on
$K_1$. Now, in order to show that $W_\eps$ is converging uniformly to $0$ on
$K_1\times K_1$, it is sufficient to prove that
$W_\eps(x_\eps, y_\eps) \to 0$ whenever $|x_\eps - y_\eps|\to 0, \
x_\eps\neq y_\eps, \ x_\eps, y_\eps \in K_1$. For this, we repeat
the arguments as in the proof of (\ref{eq: deltaeps is C1}) combined
with the argument at the end of the proof of Proposition \ref{prop:
  one dimensional setting}. 
The claim then follows. 

Therefore, we infer from (\ref{eq: 2.3.8}) that $\delta_\eps$ and $\nabla\delta_\eps$ are converging uniformly to $0$ on $\H$.  
We set 
\begin{displaymath}
\phi_\eps := \phi + \psi\delta_\eps,
\end{displaymath}
where $\psi$ has been introduced in \eqref{eq:72} and \eqref{eq: 72 ii}.
The functions $\phi_\eps: \H \rightarrow \R$ have all the desired properties. 
\end{proof}
\begin{theorem}\label{thm: De Giorgi conjecture}
  Let us suppose that $\H$ is a finite dimensional Euclidean space,
  $\phi\in \rmC^1( \H )$ satisfies 
  the quadratic lower bound \eqref{eq:coercive intro}
  and $u: [0, +\infty) \rightarrow \H$ 
  is a solution to \eqref{eq:1}. 
  Then $u$ is strongly approximable, according to Definition
  \ref{def:AGF}, i.e. 
  there exist 
  functions 
  $\phi_\tau: \H\rightarrow \R \ (\tau > 0)$ such that 
  $\Lip[\phi_\tau-\phi]\to0$ 
  as $\tau\downarrow 0$ and $\MM(\Phi,u(0)) = \{u\}=\GMM(\Phi,u(0))$ 
  for
  \begin{displaymath}
  \Phi(\tau, U, V) := \phi_\tau(V) + \frac{1}{2\tau}|V-U|^2.
\end{displaymath}
\end{theorem}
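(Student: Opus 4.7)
The strategy is to chain together three ingredients: the approximability of minimal solutions (Theorem \ref{thm:appmax}), the perturbation of an eventually minimal solution into a genuinely minimal solution for a nearby energy (Lemmas \ref{lem: ueps is max gf for phieps} and \ref{lem: whitney extension}), and the approximation of an arbitrary non-constant solution by eventually minimal ones (Remark \ref{rem:emaxsolution}). The glue throughout is the closure property of the class $\appsolution\phi$ under Lipschitz convergence of the energies and compact convergence of the curves (Lemma \ref{le:appsol}). Once $u\in\appsolution\phi$ is established, the equivalence \eqref{eq:49 equiv} valid in finite dimension delivers $\MM(\Phi,u(0))=\{u\}=\GMM(\Phi,u(0))$.

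First, if $u$ is constant then $u$ is minimal by Definition \ref{def: max gf}, and Theorem \ref{thm:appmax} yields $u\in\appsolution\phi$ directly in the finite-dimensional setting. So it remains to treat the non-constant case. By Remark \ref{rem:emaxsolution} there exists a sequence $(u_n)_{n\in\N}\subset\solution\phi$ of eventually minimal solutions with $u_n(0)=u(0)$ converging to $u$ uniformly on compact intervals, which by \eqref{eq:43} means $\dsfd\infty(u_n,u)\to 0$. Applying Lemma \ref{le:appsol} with $\phi_k\equiv\phi$, it is enough to prove $u_n\in\appsolution\phi$ for each eventually minimal $u_n$.

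Fix then an eventually minimal $u\in\solution\phi$. The one-dimensional reduction and Whitney extension of Lemmas \ref{lem: ueps is max gf for phieps} and \ref{lem: whitney extension} produce, for every $\eps>0$, a function $\phi_\eps\in\rmC^1(\H)$ with $\lim_{\eps\downarrow 0}\big(\sup_\H|\phi_\eps-\phi|+\Lip[\phi_\eps-\phi]\big)=0$, together with a curve $u_\eps\in\minsolution{\phi_\eps}$ satisfying $u_\eps(0)=\sfy(\sfu_\eps(0))=v(0)=u(0)$ and $u_\eps\to u$ locally uniformly. Since $\Lip[\phi_\eps-\phi]$ and $|\phi_\eps(0)-\phi(0)|$ both vanish, the quadratic lower bound \eqref{eq:coercive intro} is inherited by $\phi_\eps$ for all sufficiently small $\eps$ (up to a mild change of constants, cf.\ Remark \ref{rem:QLB}). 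Hence Theorem \ref{thm:appmax} in finite dimension applies and gives $u_\eps\in\appsolution{\phi_\eps}$. A second application of Lemma \ref{le:appsol}, this time with varying energies $\phi_\eps\to\phi$ in Lipschitz norm and $u_\eps\to u$ in $\dsfd\infty$, delivers $u\in\appsolution\phi$.

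The finite-dimensionality of $\H$ enters the argument in two (related) places only: in Lemma \ref{lem: whitney extension} through Whitney's extension theorem, which converts the one-dimensional perturbation $E_\eps$ of Proposition \ref{prop: one dimensional setting} into a $\rmC^1$ perturbation of $\phi$ on the whole of $\H$ with matching gradient on $\range v$, and in Theorem \ref{thm:appmax} to upgrade compact-interval approximability to $\dsfd\infty$-approximability. The real work is therefore done in Sections 4--6, and the present theorem reduces to the bookkeeping described above. The most delicate step to verify is the second application of Lemma \ref{le:appsol}: one must check that the initial data coincide (guaranteed by construction), that the Lipschitz convergence $\phi_\eps\to\phi$ is truly obtained (provided by \eqref{eq:75}), and that $u_\eps$ is compact-convergent to $u$ (provided by Lemma \ref{lem: ueps is max gf for phieps} via the uniform convergence $\sfu_\eps\to\sfu$).
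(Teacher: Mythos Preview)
Your proof is correct and follows essentially the same approach as the paper's own proof: both combine Theorem \ref{thm:appmax}, Lemmas \ref{lem: ueps is max gf for phieps} and \ref{lem: whitney extension}, Remark \ref{rem:emaxsolution}, and the closure Lemma \ref{le:appsol} in the same logical order. You have spelled out some details the paper leaves implicit (the constant case, the inheritance of the quadratic lower bound by $\phi_\eps$, the verification $u_\eps(0)=u(0)$), but the architecture is identical.
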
     
\begin{proof}
  Lemma \ref{le:appsol} shows that the class of strongly approximable solutions is
  closed with respect to Lipschitz convergence of the functionals and locally
  uniform convergence of the solutions. 
  By Theorem \ref{thm:appmax}, every \minimal\ solution is
  strongly approximable; combining these results with the results from Lemma 
  \ref{lem: ueps is max gf for phieps} and
  \ref{lem: whitney extension} we obtain that the class of eventually
  \minimal\ solutions is also strongly approximable. 
  By Remark \ref{rem:emaxsolution} we conclude.  
\end{proof}

\begin{remark} \upshape
\normalcolor If $\phi\in\Lip(\H)$, then it clearly satisfies the quadratic lower bound \eqref{eq:coercive intro}. 
\end{remark}

\appendix
\section{Diffuse critical points for one-dimensional gradient flows}
\label{sec:example}

 In this section we give an example of a solution to a one dimensional
 gradient flow generated by a function whose derivative vanishes in a Cantor set.
 In particular, the example shows that the strict monotonicity of the energy along a solution curve 
 is not sufficient to guarantee its \minimality.
 \medskip

 Let us start from the continuous function 
 $f=-\phi': \R\rightarrow \R$ defined by
 $$
 f(x) :=
 \begin{cases}
   \pi \sqrt{x(1-x)}&\text{if $x\in(0, 1)$},\\
   0&\text{elsewhere}.
 \end{cases}
 $$
 One can check by direct calculation that
 the curve $u: [0, 1] \rightarrow \R$ 
 $$u(t):= \frac{1}{2} + \frac{1}{2} \sin\left(\pi\left(t-\frac{1}{2}\right)\right),
 \quad\text{satisfies $u'(t) = f(u(t)), \ u(0)=0, \ u(1)=1$.}$$
 Let $C\subset [0, 1]$ be the classical Cantor set and decompose $[0, 1]\setminus C$ into the disjoint union of countable open intervals $I_n=(a_n, b_n) \ (n\in\mathbb{N})$ with $l_n:=b_n - a_n$. We denote by $L_n: \R \rightarrow [0, 1]$ the continuous and piecewise linear map transforming $I_n$ into $(0, 1)$, which is constant outside $I_n$ (i.e. $L_n(x) = 0$ if $x\leq a_n$ and $L_n(x)=1$ if $x\geq b_n$).

 Since $\sum_n{l_n} = 1 < \infty$, we can choose $\beta_n > 0$ so that
 \begin{displaymath}
 \alpha_n:=\beta_n^{-1}l_n \to 0, \quad B:=\sum_n{\beta_n} < \infty. 
 \end{displaymath}
 We set 
 \begin{equation*}
 f_n(x):=\beta_n^{-1}l_nf(L_n(x))
 \end{equation*}
 and define $g:\R \rightarrow \R$ by 
 \begin{equation*}
 g(x):=\sum_n{f_n(x)}.
 \end{equation*}
 Note that $g$ is well-defined and continuous since $\sup_x \sum_{n=m}^{M}{f_n(x)}\leq \pi \sup_{n\geq m}\alpha_n$ and every $f_n$ is continuous. 

 Now, let us define the map $R:[0,1]\rightarrow \R$, 
 \begin{displaymath}
 R(x):=\sum_n{\beta_n L_n(x)}, \quad R'(x) = \frac{\beta_n}{l_n} \text{ on } I_n,
 \end{displaymath}
 which is absolutely continuous since $\sum_n{\beta_n} < \infty$, with $R'(x) > 0$ a.e.. Hence, $R$ possesses an inverse map $R^{-1}=:S: [0, B] \rightarrow [0, 1]$, which is also absolutely continuous. 

 The intervals $\tilde{I}_n := R(I_n) = (\tilde{a}_n, \tilde{b}_n)$ are disjoint, covering $[0, B]\setminus R(C)$. Note that $R(C)$ has Lebesgue measure $0$. Setting $\tilde{L}_n := L_n \circ S$, we have
 \begin{displaymath}
 \tilde{L}_n(t)=\frac{t-\tilde{a}_n}{\beta_n} \text{ if } t\in\tilde{I}_n, \ \tilde{L}_n(t) = 0 \text{ if } t \leq \tilde{a}_n, \ \tilde{L}_n(t) = 1 \text{ if } t\geq \tilde{b}_n.
 \end{displaymath}

 We define $v: [0, B] \rightarrow \R$, 
 \begin{displaymath}
 v(t):=\sum_n{l_n u(\tilde{L}_n(t))}. 
 \end{displaymath}
 It is not difficult to check that $v$ is of class $C^1$, and that 
 \begin{displaymath}
 \{t\in[0, B]: \ v'(t) = 0\} \ = \ R(C). 
 \end{displaymath}
 Moreover, it holds that $v' = g\circ v$: by density and continuity, it is sufficient to select $t\in\tilde{I}_n$; in this case, we have 
 \begin{equation*}
 v(t) = a_n + l_n u((t-\tilde{a}_n)/\beta_n)
 \end{equation*} 
 and 
 \begin{equation*}
 v'(t) = \beta_n^{-1} l_n u'(\tilde{L}_n(t)) = f_n(l_n u(\tilde{L}_n(t)) + a_n) = f_n(v(t)) = g(v(t)). 
 \end{equation*}
 So if we denote by $G$ the primitive of $g$, then $v$ is a \minimal\ gradient flow for $-G$.

 Let $\mu$ be a positive finite Cantor measure concentrated on $R(C)$, in particular $\mu(\{x\}) = 0$ for all $x$ and $\mu(R(C)) > 0$. We define 
 \begin{equation*}
 \psi(t) := t + \mu([0, t)). 
 \end{equation*}
 The map $\psi: [0, B] \rightarrow [0, B + \mu(R(C))]$ is continuous and strictly increasing and we denote by $\eta: [0, B + \mu(R(C))] \rightarrow [0, B]$ its strictly increasing inverse. For $s < t$, we have
 \begin{equation*}
 t - s = \psi(\eta(t)) - \psi(\eta(s)) = \eta(t) - \eta(s) + \mu((\eta(s), \eta(t))) \geq \eta(t) - \eta(s),
 \end{equation*}  
 i.e. $\eta$ is 1-Lipschitz continuous. 

 We define $w: [0, B + \mu(R(C))] \rightarrow \R$, 
 \begin{displaymath}
 w(s) := v(\eta(s)). 
 \end{displaymath} 
 The curve $w$ is Lipschitz continuous and $\eta'(s) = 1$ for all $s\in \psi([0, B]\setminus R(C))$. Moreover, it holds that 
 \begin{displaymath}
 \{s\in [0, B + \mu(R(C))]: \ g(w(s)) = 0\} \ = \ \{\psi(t): \ t\in[0, B], \ g(v(t)) = 0\} \ = \ \psi(R(C)).
 \end{displaymath} 
 From this we can infer 
 \begin{equation*}
 w'(s) = g(w(s)) \text{ for all } s\in(0, B + \mu(R(C))) 
 \end{equation*}
 (in particular, $w$ is of class $C^1$).
 
 The set $\psi(R(C))$ has Lebesgue measure $\mu(R(C)) > 0$. So, the gradient flow $w$ is not \minimal\ but along the curve the energy $-G\circ w: [0, B+\mu(R(C))]\rightarrow \R$ is strictly decreasing.\\

 The example could be set in a more general way, starting from a cantor-like set and an ordinary differential equation with non-uniqueness at the end points of a reference interval.

\bibliographystyle{siam}
\bibliography{bibliografia2015}

\def\cprime{$'$}
\begin{thebibliography}{10}

\bibitem{AlmgrenTaylorWang93}
{\sc F.~Almgren, J.~E. Taylor, and L.~Wang}, {\em {Curvature-Driven Flows: A
  Variational Approach}}, SIAM Journal on Control and Optimization, 31 (1993),
  pp.~387--437.

\bibitem{Ambrosio95}
{\sc L.~Ambrosio}, {\em Minimizing movements}, Rend. Accad. Naz. Sci. XL Mem.
  Mat. Appl. (5), 19 (1995), pp.~191--246.

\bibitem{AmbrosioFuscoPallara00}
{\sc L.~Ambrosio, N.~Fusco, and D.~Pallara}, {\em Functions of Bounded
  Variation and Free Discontinuity Problems}, Oxford Mathematical Monographs,
  Oxford University Press, first~ed., 2000.

\bibitem{AGS08}
{\sc L.~Ambrosio, N.~Gigli, and G.~Savar{\'e}}, {\em Gradient flows in metric
  spaces and in the space of probability measures}, Lectures in Mathematics ETH
  Z\"urich, Birkh\"auser Verlag, Basel, second~ed., 2008.

\bibitem{ball2000continuity}
{\sc J.~Ball}, {\em Continuity properties and global attractors of generalized
  semiflows and the {N}avier-{S}tokes equations}, in Mechanics: from theory to
  computation, Springer, 2000, pp.~447--474.

\bibitem{braides2012local}
{\sc A.~Braides}, {\em Local Minimization, Variational Evolution and
  $\Gamma$-Convergence}, Springer, Lecture Notes in Mathematics 2094, 2012.

\bibitem{Brezis70}
{\sc H.~Brezis}, {\em On some degenerate nonlinear parabolic equations}, in
  Nonlinear Functional Analysis (Proc. Sympos. Pure Math., Vol. XVIII, Part 1,
  Chicago, Ill., 1968), Amer. Math. Soc., Providence, R.I., 1970, pp.~28--38.

\bibitem{browder1964non}
{\sc F.~E. Browder}, {\em Non-linear equations of evolution}, Annals of
  Mathematics,  (1964), pp.~485--523.

\bibitem{DeGiorgi93}
{\sc E.~{De Giorgi}}, {\em New problems on minimizing movements}, in Boundary
  Value Problems for PDE and Applications, C.~Baiocchi and J.~L. Lions, eds.,
  Masson, 1993, pp.~81--98.

\bibitem{fleissner2016gamma}
{\sc F.~Flei{\ss}ner}, {\em Gamma-convergence and relaxations for gradient
  flows in metric spaces: a minimizing movement approach}, ESAIM Control Optim.
  Calc. Var.(to appear), arXiv preprint arXiv:1603.02822,  (2016).

\bibitem{fleissner-in-preparation}
\leavevmode\vrule height 2pt depth -1.6pt width 23pt, {\em Minimal solutions to
  generalized {$\Lambda$}-semiflows and gradient flows in metric spaces},
  submitted,  (2017).

\bibitem{GGS94}
{\sc U.~Gianazza, M.~Gobbino, and G.~Savar\'e}, {\em Evolution problems and
  minimizing movements}, Atti Acc. Naz. Lincei, 5 (1994), pp.~289--296.

\bibitem{Gobbino99}
{\sc M.~Gobbino}, {\em Minimizing movements and evolution problems in
  {E}uclidean spaces}, Annali di Matematica pura ed applicata, 176 (1999),
  p.~29–48.

\bibitem{Hoermander98}
{\sc L.~H\"ormander}, {\em The analysis of linear partial differential
  operators. {I}}, Classics in Mathematics, Springer-Verlag, Berlin, 2003.
\newblock Distribution theory and Fourier analysis, Reprint of the second
  (1990) edition [Springer, Berlin; MR1065993 (91m:35001a)].

\bibitem{leoni2009first}
{\sc G.~Leoni}, {\em A first course in Sobolev spaces}, vol.~105, American
  Mathematical Society Providence, RI, 2009.

\bibitem{MarinoSacconTosques89}
{\sc A.~Marino, C.~Saccon, and M.~Tosques}, {\em Curves of maximal slope and
  parabolic variational inequalities on nonconvex constraints}, Ann. Scuola
  Norm. Sup. Pisa Cl. Sci. (4), 16 (1989), pp.~281--330.

\bibitem{mielke2011differential}
{\sc A.~Mielke}, {\em Differential, energetic, and metric formulations for
  rate-independent processes}, Springer, 2011.

\bibitem{Mielke-Rindler09}
{\sc A.~Mielke and F.~Rindler}, {\em Reverse approximation of energetic
  solutions to rate-independent processes}, NoDEA Nonlinear Differential
  Equations Appl., 16 (2009), pp.~17--40.

\bibitem{Mielke-Rossi-Savare13}
{\sc A.~Mielke, R.~Rossi, and G.~Savar{\'e}}, {\em Nonsmooth analysis of doubly
  nonlinear evolution equations}, Calc. Var. Partial Differential Equations, 46
  (2013), pp.~253--310.

\bibitem{mielke2016balanced}
\leavevmode\vrule height 2pt depth -1.6pt width 23pt, {\em Balanced viscosity
  (bv) solutions to infinite-dimensional rate-independent systems}, Journal of
  the European Mathematical Society, 18 (2016), pp.~2107--2165.

\bibitem{Rossi-Savare06}
{\sc R.~Rossi and G.~Savar{\'e}}, {\em Gradient flows of non convex functionals
  in {H}ilbert spaces and applications}, ESAIM Control Optim. Calc. Var., 12
  (2006), pp.~564--614 (electronic).

\bibitem{Rudin91}
{\sc W.~Rudin}, {\em Functional analysis}, International Series in Pure and
  Applied Mathematics, McGraw-Hill, Inc., New York, second~ed., 1991.

\end{thebibliography}
\end{document}